\documentclass[a4paper]{article}

\usepackage[top=1in, bottom=1in, left=1.1in, right=1.8in]{geometry}
\linespread{1.1}
\usepackage{amsfonts}
\usepackage{amsmath}
\usepackage{amsthm,amssymb}
\usepackage{CJK,graphicx}
\usepackage{amscd}
\usepackage{amssymb}
\usepackage{mathrsfs}
\usepackage[all,cmtip]{xy}
\usepackage{lmodern}
\usepackage[Symbol]{upgreek}
\usepackage{bm}
\usepackage{eucal}
\usepackage[nopar]{lipsum}
\usepackage{rotating}
\usepackage{tikz}
\usepackage{calc}
\usepackage{tikz-cd}
\usepackage{mathabx}
\usepackage{tikz-3dplot}
\usetikzlibrary{calc}
\usetikzlibrary{decorations.markings,arrows}
\usetikzlibrary{shapes.geometric}
\setcounter{MaxMatrixCols}{11}
\newsavebox\CBox
\newcommand\hcancel[2][0.5pt]{%
	\ifmmode\sbox\CBox{$#2$}\else\sbox\CBox{#2}\fi%
	\makebox[0pt][l]{\usebox\CBox}%
	\rule[0.5\ht\CBox-#1/2]{\wd\CBox}{#1}}

\newtheorem{theorem}{Theorem}[section]
\newtheorem{corollary}{Corollary}[section]
\newtheorem{definition}{Definition}[section]
\newtheorem{lemma}{Lemma}[section]
\newtheorem{proposition}{Proposition}[section]

\newtheorem{conjecture}{Conjecture}[section]

\newtheorem{remark}{Remark}[section]
\newtheorem{example}{Example}[section]

\AtEndDocument{\bigskip{\footnotesize
	    \textsc{Columbia University}\\
		\textit{E-mail address}: \texttt{yinlee@math.columbia.edu} \par
}}

\begin{document}
	
\title{\textbf{Nonexistence of exact Lagrangian tori in affine conic bundles over $\mathbb{C}^n$}}\author{Yin Li}\date{}\maketitle

\begin{abstract}
Let $M\subset\mathbb{C}^{n+1}$ be a smooth affine hypersurface defined by the equation $xy+p(z_1,\cdots,z_{n-1})=1$, where $p$ is a Brieskorn-Pham polynomial and $n\geq2$. We prove that if $L\subset M$ is a closed, orientable, exact Lagrangian submanifold, then $L$ cannot be a $K(\pi,1)$ space. The key point of the proof is to establish a version of homological mirror symmetry for the wrapped Fukaya category of $M$, from which the finite-dimensionality of the symplectic cohomology group $\mathit{SH}^0(M)$ follows by a Hochschild cohomology computation.
\end{abstract}

\section{Introduction}

\subsection{Overview}

The study of Lagrangian embedding in symplectic manifolds using holomorphic curves dates back to the ground-breaking work of Gromov $\cite{mg}$, where it is proved that for any closed Lagrangian submanifold $L\subset\mathbb{C}^n$, it can never be exact. Using modern languages, Gromov's result can be interpreted as the vanishing of the symplectic cohomology group $\mathit{SH}^\ast(\mathbb{C}^n)$. See $\cite{ps2}$ for a good survey on related topics. 

In general, let $M$ be a Liouville manifold, one can get information about Lagrangian embedding in $M$ by studying the symplectic cohomology $\mathit{SH}^\ast(M)$ or its twisted version. By showing the vanishing of the symplectic cohomology twisted by a generic non-exact 2-form, Ritter proved the following:

\begin{theorem}[$\cite{ar}$]
Let $M$ be a 4-dimensional Milnor fiber of a simple singularity, then $M$ does not contain an exact Lagrangian torus.
\end{theorem}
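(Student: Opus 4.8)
The plan is to obstruct exact Lagrangian tori by deforming symplectic cohomology in the direction of $H^2(M;\mathbb R)$, following the twisted Floer theory of $\cite{ar}$. Write $M$ for the Liouville completion of the Milnor fiber. For a simple surface singularity $H_2(M;\mathbb Z)$ is the corresponding root lattice and its intersection form is negative definite, equal to minus the Cartan matrix. Suppose, for contradiction, that $L\subset M$ is a closed exact Lagrangian with $L\cong T^2$. The first step is purely topological: a Lagrangian surface in a symplectic $4$-manifold satisfies $[L]\cdot[L]=-\chi(L)$, so $[T^2]\cdot[T^2]=0$, and negative definiteness forces $i_*[L]=0$ in $H_2(M;\mathbb Q)$. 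Hence the restriction $H^2(M;\mathbb R)\to H^2(L;\mathbb R)$ vanishes, so $\beta|_L=0$ for every $\beta\in H^2(M;\mathbb R)$; in particular each such $\beta$ is represented by a closed $2$-form vanishing near $L$. (Only $L\cong T^2$ is used here, not exactness.)

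Next, invoke the twisted package. For $\beta\in H^2(M;\mathbb R)$ one has a deformed symplectic cohomology $\mathit{SH}^\ast(M;\beta)$, a unital algebra over a Novikov field $\Lambda$ in which Floer cylinders are weighted by their $\beta$-area; since $L$ is exact there is likewise a deformed Lagrangian Floer cohomology $\mathit{HF}^\ast(L,L;\beta)$ and a unital closed--open map $\mathcal{CO}_\beta\colon\mathit{SH}^\ast(M;\beta)\to\mathit{HF}^\ast(L,L;\beta)$. Because $L$ is exact and $\beta|_L=0$, the $\beta$-twist restricted to $L$ is trivial, and an energy-filtration argument (whose $E_1$-page is the untwisted $\mathit{HF}^\ast(L,L)\cong H^\ast(L;\Lambda)$) shows the unit $1_L\in\mathit{HF}^\ast(L,L;\beta)$ is nonzero. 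Unitality of $\mathcal{CO}_\beta$ then yields $\mathit{SH}^\ast(M;\beta)\neq0$ for \emph{every} $\beta$, provided such an $L$ exists.

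The crux is the opposite assertion: $\mathit{SH}^\ast(M;\beta)=0$ for all $\beta$ outside a proper algebraic subset of $H^2(M;\mathbb R)$, hence for some $\beta$. Here one uses that $M$ is an ALE space, a crepant resolution of $\mathbb C^2/\Gamma$, and --- in type $A_n$ --- an affine conic bundle $\{x^2+y^2+z^{n+1}=1\}\to\mathbb C$ carrying a Hamiltonian $S^1$-action. The $S^1$-action gives a presentation of $\mathit{SH}^\ast(M;\beta)$ as a deformation of $H^\ast(M;\Lambda)$, built from the exceptional $(-2)$-spheres (the Lagrangian vanishing cycles), in which $\beta$ enters the defining relations; for $\beta$ avoiding finitely many hyperplanes these relations generate the unit ideal and the ring collapses. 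Conceptually, the twisting detects the exceptional spheres precisely because they are homologically nontrivial and carry nonzero $\beta$-area, while it cannot detect the homologically trivial torus $L$ --- this asymmetry is the whole point. Finally, choosing $\beta$ with $\mathit{SH}^\ast(M;\beta)=0$ contradicts $1=\mathcal{CO}_\beta(1)\neq0$ from the previous step, so $M$ contains no exact Lagrangian torus.

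I expect the main obstacle to be this vanishing computation of $\mathit{SH}^\ast(M;\beta)$: the remaining steps are essentially formal, but identifying the deformed symplectic cohomology explicitly --- via the Hamiltonian $S^1$-action and a quantum-cohomology-type calculation as in $\cite{ar}$, or (in the spirit of the present paper) via homological mirror symmetry and a Hochschild-cohomology computation for a deformed wrapped Fukaya category --- requires genuine geometric input and careful control of the Novikov weights.
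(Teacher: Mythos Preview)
The paper does not prove this statement: it is quoted as Ritter's result, with the one-line summary ``By showing the vanishing of the symplectic cohomology twisted by a generic non-exact 2-form, Ritter proved the following.'' Your proposal is precisely an expansion of that sentence and matches Ritter's original argument in $\cite{ar}$: the topological step ($[L]\cdot[L]=-\chi(L)=0$ together with negative definiteness of the intersection form forces $[L]=0$, hence $\beta|_L=0$), the nonvanishing of the $\beta$-twisted invariant on the $L$ side, and the generic vanishing of $\mathit{SH}^\ast(M;\beta)$ are exactly the ingredients of $\cite{ar}$. One small variation: Ritter obtains the contradiction via twisted Viterbo functoriality $\mathit{SH}^\ast(M;\beta)\to\mathit{SH}^\ast(T^\ast L;\beta|_L)$ rather than the closed--open map to $\mathit{HF}^\ast(L,L;\beta)$; both targets are unital and nonzero when $\beta|_L=0$, so either route works. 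Your closing suggestion of a mirror-symmetric/Hochschild computation of the twisted vanishing is not what the present paper does for this theorem --- the paper's Hochschild methods are deployed only for the higher-dimensional affine conic bundles in Theorem~\ref{theorem:main}.
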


This forces the closed, orientable exact Lagrangian submanifolds in $M$ to be spheres, and these Lagrangian spheres are later classified when the singularity is type $A_m$ $\cite{ww}$.

One can consider more delicate structures on $\mathit{SH}^\ast(M)$ obtained by counting a 1-parameter family of holomorphic curves, which leads to the definition of the BV (Batalin-Vilkovisky) operator
\begin{equation}
\Delta:\mathit{SH}^\ast(M)\rightarrow\mathit{SH}^{\ast-1}(M).
\end{equation}
A cohomology class $b\in\mathit{SH}^1(M)$ is called a \textit{dilation} by Seidel-Solomon $\cite{ss}$ if its image under the BV operator gives the identity $1\in\mathit{SH}^0(M)$. The compatibility between Viterbo functoriality and the BV operator implies the following:

\begin{theorem}[$\cite{ss}$, Corollary 6.3]\label{theorem:dilation}
Let $M$ be a Liouville manifold admitting a dilation. If $L\subset M$ is a closed exact Lagrangian submanifold, then $L$ cannot be a $K(\pi,1)$ space.
\end{theorem}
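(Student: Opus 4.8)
The plan is to reduce the statement, via Viterbo functoriality, to a purely topological assertion about the free loop space of $L$. Assume $L\subset M$ is closed, orientable and exact. Since $L$ is exact, the Weinstein neighbourhood theorem (with exactness used to match up the Liouville forms) embeds the unit disc cotangent bundle $D^*L$ into $M$ as a Liouville subdomain, so Viterbo functoriality supplies a unital ring homomorphism $\mathit{SH}^{\ast}(M)\rightarrow\mathit{SH}^{\ast}(D^*L)=\mathit{SH}^{\ast}(T^*L)$. The first step is to record that this restriction map is a morphism of BV algebras, i.e.\ it intertwines the BV operators on source and target; granting this, the image $b'\in\mathit{SH}^{1}(T^*L)$ of a dilation $b\in\mathit{SH}^{1}(M)$ again satisfies $\Delta b'=1$. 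Thus it suffices to prove that $\mathit{SH}^{\ast}(T^*L)$ admits no dilation whenever $L$ is a $K(\pi,1)$.

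For the second step I would invoke the isomorphism $\mathit{SH}^{\ast}(T^*L)\cong H_{n-\ast}(\mathcal{L}L;\mathbb{Q})$ of Viterbo, Abbondandolo--Schwarz and Abouzaid (here $n=\dim L$ and $\mathcal{L}L$ is the free loop space), which is a ring isomorphism onto the Chas--Sullivan loop homology and, crucially, carries the symplectic BV operator to the loop-rotation operator $\Delta$ on $H_{\ast}(\mathcal{L}L)$. One should keep track of orientations and Spin structures here; with rational coefficients and $L$ orientable the relevant bookkeeping can be arranged, the only possible defect being a local system which is trivial over the component of null-homotopic loops --- and it is exactly that component which matters. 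Under this identification the unit $1\in\mathit{SH}^{0}(T^*L)$ corresponds to the image of the fundamental class $[L]\in H_n(L)$ under the inclusion of constant loops $L\hookrightarrow\mathcal{L}L$, so it lies on the component $\mathcal{L}_0L$ of contractible loops. Since $\Delta$ preserves components, a dilation in $\mathit{SH}^{\ast}(T^*L)$ would put $1$ in the image of $\Delta\colon H_{n-1}(\mathcal{L}_0L)\rightarrow H_n(\mathcal{L}_0L)$.

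The third step is the topological heart of the matter: if $L$ is aspherical then $\Delta$ vanishes identically on $H_{\ast}(\mathcal{L}_0L)$. Indeed the evaluation fibration $\Omega_0L\rightarrow\mathcal{L}_0L\rightarrow L$ has contractible fibre when $\pi_k(L)=0$ for $k\geq2$, so the inclusion $c\colon L\hookrightarrow\mathcal{L}_0L$ of constant loops is a homotopy equivalence, with homotopy inverse the base-point evaluation $e\colon\mathcal{L}_0L\rightarrow L$. The loop-rotation action $a\colon S^1\times\mathcal{L}_0L\rightarrow\mathcal{L}_0L$ satisfies $a\circ(\mathrm{id}_{S^1}\times c)=c\circ\mathrm{pr}_L$, where $\mathrm{pr}_L\colon S^1\times L\rightarrow L$ is the projection; since $\mathrm{id}_{S^1}\times c$ is a homotopy equivalence, $a$ is therefore homotopic to $(c\circ e)\circ\mathrm{pr}$, where $\mathrm{pr}\colon S^1\times\mathcal{L}_0L\rightarrow\mathcal{L}_0L$ is the projection. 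As $\Delta(x)=a_{\ast}([S^1]\times x)$ and $\mathrm{pr}_{\ast}([S^1]\times x)=0$ for the positive-degree class $[S^1]\in H_1(S^1)$, we conclude $\Delta\equiv0$ on $H_{\ast}(\mathcal{L}_0L)$. This contradicts $1\neq0$ being in the image of $\Delta$, so $L$ cannot be a $K(\pi,1)$.

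I expect the main obstacle to be the two naturality inputs used above: that Viterbo functoriality is compatible with the BV operator (so that dilations restrict to dilations), and that the loop-space isomorphism intertwines the symplectic and string-topology BV operators, together with the attendant orientation/Spin subtleties. The concluding topological lemma, by contrast, is elementary, and everything else is formal. An alternative would be to argue through the closed--open map into the wrapped Fukaya category of $T^*L$ and use its identification with chains on the based loop space of $L$, but the compatibility of that map with $\Delta$ would still be the crux.
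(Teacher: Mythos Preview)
Your proposal is correct and follows exactly the mechanism the paper itself indicates: the sentence immediately preceding the theorem reads ``The compatibility between Viterbo functoriality and the BV operator implies the following,'' and the paper offers no further proof beyond the citation to \cite{ss}. Your three-step outline --- restrict the dilation to $T^*L$ via Viterbo functoriality (which intertwines $\Delta$), identify $\mathit{SH}^*(T^*L)$ with loop homology as BV algebras, and then observe that for aspherical $L$ the $S^1$-action on $\mathcal{L}_0L\simeq L$ is homotopically trivial so $\Delta\equiv0$ there --- is the standard argument and is what the paper is invoking.

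One point to tighten: you begin by assuming $L$ orientable, but the theorem as stated carries no such hypothesis, and the paper explicitly relies on this later (see Remark~1.1, where it is stressed that Theorems~\ref{theorem:dilation} and~\ref{theorem:cyclic} do \emph{not} require orientability, in contrast to Theorems~\ref{theorem:quasi-dilation} and~\ref{theorem:main}). The fix the paper points to is not merely bookkeeping on the contractible loop component: one must run Viterbo functoriality and the loop-space isomorphism with the local system pulled back from $w_2(L)$, as in Abouzaid~\cite{ma2}. Your Step~3 is unaffected --- the vanishing of $\Delta$ on $H_*(\mathcal{L}_0L)$ with any coefficient system follows from the same homotopy-triviality argument --- but Steps~1 and~2 need to be stated in the twisted setting to cover the non-orientable case.
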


\begin{example}\label{example:triple}
It is known, by iterating an argument using Lefschetz fibrations ($\cite{ss}$, Section 7), that any Milnor fiber of an isolated hypersurface singularity obtained by triple stabilizations, i.e. one of the form
\begin{equation}\label{eq:triple}
z_1^2+z_2^2+z_3^2+p(z_4,\cdots,z_{n+1})=0,
\end{equation}
where $p$ is a polynomial with an isolated critical point in the origin of $\mathbb{C}^{n-2}$, admits a dilation. See $\cite{ps4}$, Example 2.13.
\end{example}

More generally, one can replace the identity in the definition of a dilation with an arbitrary class $h\in\mathit{SH}^0(M)$ which is invertible with respect to the pair-of-pants product, and define the notion of a \textit{quasi-dilation} $(h,b)\in\mathit{SH}^0(M)\times\mathit{SH}^1(M)$ by asking $\Delta(hb)=h$. See $\cite{ps5}$, Lecture 19. The following generalization of Theorem \ref{theorem:dilation} is essentially due to Davison:

\begin{theorem}[$\cite{bd}$]\label{theorem:quasi-dilation}
Let $M$ be a Liouville manifold admitting a quasi-dilation. If $L\subset M$ is a closed, orientable, exact Lagrangian submanifold, then $L$ cannot be a hyperbolic space.
\end{theorem}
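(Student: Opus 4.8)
The plan is to follow and extend the argument behind Theorem~\ref{theorem:dilation}, reducing the assertion to a statement in the string topology of $L$. Suppose for contradiction that $L\subset M$ is a closed, orientable, exact Lagrangian submanifold that is hyperbolic; set $n=\dim L$, so $n\geq2$, and note that $L$ is aspherical with $\pi:=\pi_1(L)$ torsion-free and (Gromov-)hyperbolic. Exactness of $L$ together with the Weinstein neighborhood theorem gives a Liouville embedding of a disc cotangent bundle $D^\ast L\hookrightarrow M$, after an exact deformation of the Liouville one-form supported near $L$ that leaves $\mathit{SH}^\ast$ unchanged. Viterbo functoriality then provides a unital ring homomorphism $\mathit{SH}^\ast(M)\to\mathit{SH}^\ast(T^\ast L)$ that intertwines the BV operators, so the image $(h',b')$ of a quasi-dilation $(h,b)$ still satisfies $\Delta(h'b')=h'$ with $h'$ invertible in $\mathit{SH}^0(T^\ast L)$ (a unital ring map sends invertible elements to invertible elements). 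Thus $T^\ast L$ carries a quasi-dilation, and it suffices to rule this out.

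For this I would invoke the Viterbo--Abbondandolo--Schwarz--Abouzaid isomorphism $\mathit{SH}^\ast(T^\ast L)\cong H_{n-\ast}(\mathcal{L}L)$, which for oriented $L$ is an isomorphism of BV algebras (with the coefficients, respectively a possible twist of the loop homology by the transgression of $w_2(L)$, arranged so that this holds; the computation below only uses the top homology of a closed oriented manifold and is unaffected by such a twist). Under this identification, what must be shown is that the string topology BV operator $\Delta\colon H_{n-1}(\mathcal{L}L)\to H_n(\mathcal{L}L)$ cannot hit an invertible element of the Chas--Sullivan ring $H_n(\mathcal{L}L)$. Because $L$ is aspherical, $\mathcal{L}L$ is homotopy equivalent to the disjoint union $\coprod_{[\gamma]}BC_\pi(\gamma)$ over conjugacy classes of $\pi$, where $C_\pi(\gamma)$ is the centralizer. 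The component $[\gamma]=1$ is the subspace $\mathcal{L}_1L$ of contractible loops, homotopy equivalent to $L$ via the inclusion of the constant loops; for $\gamma\neq1$ the centralizer of $\gamma$ in a torsion-free hyperbolic group is infinite cyclic, so the corresponding component is homotopy equivalent to $S^1$.

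Now the contradiction. Since $n\geq2$, the $S^1$-components contribute nothing in degree $n$, so $\mathit{SH}^0(T^\ast L)=H_n(\mathcal{L}L)=H_n(L)$ is one-dimensional; being a unital $1$-dimensional algebra it is a field, and $h'$ is simply a nonzero scalar. On the other hand $\Delta$ preserves the connected components of $\mathcal{L}L$ (rotating a loop does not change its free homotopy class), so $\Delta$ on $H_{n-1}(\mathcal{L}L)=\bigoplus_{[\gamma]}H_{n-1}(\mathcal{L}_\gamma L)$ is the direct sum of maps $\Delta_\gamma\colon H_{n-1}(\mathcal{L}_\gamma L)\to H_n(\mathcal{L}_\gamma L)$. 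For $\gamma\neq1$ the target $H_n(S^1)$ vanishes; for $\gamma=1$ the map is $\Delta$ on $\mathcal{L}_1L\simeq L$, which vanishes because $\Delta$ annihilates the classes of constant loops (the circle action fixes them) and these exhaust $H_\ast(\mathcal{L}_1L)$. Hence $\Delta\equiv0$ on $H_{n-1}(\mathcal{L}L)$, so $\Delta(h'b')=0\neq h'$, contradicting the quasi-dilation relation. Therefore $L$ cannot be hyperbolic.

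The argument is thus essentially formal once two inputs are granted: the cotangent restriction map and the cotangent-bundle isomorphism must be available \emph{as morphisms of BV algebras}, with mutually compatible conventions for orientations and spin structures; and one uses the classical group-theoretic fact that the centralizer of a nontrivial element of a torsion-free word-hyperbolic group is infinite cyclic. I expect the only real friction to be bookkeeping the sign and coefficient subtleties in the string topology identification, so that ``$h'$ invertible'' and ``$\Delta$ vanishes on constant loops'' are simultaneously valid over the same ground ring; the rest is the short computation above, which is where the hyperbolicity of $\pi_1(L)$ enters decisively, since it is what collapses every noncontractible component of $\mathcal{L}L$ to a circle.
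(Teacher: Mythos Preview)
Your argument is correct, but note that the paper does not itself prove Theorem~\ref{theorem:quasi-dilation}: it is attributed to Davison~\cite{bd}, and the remark following Theorem~\ref{theorem:main} indicates that Davison's proof proceeds via \emph{open-string} invariants (\cite{bd}, Proposition~5.2.6), i.e., through the Hochschild theory of $C_{-\ast}(\Omega_pL)\simeq\mathbb{K}[\pi_1(L)]$ and the (non)existence of exact Calabi--Yau structures there.

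Your route is different in packaging: you stay entirely on the closed-string side, transporting the quasi-dilation to $T^\ast L$ via Viterbo functoriality and then exploiting the BV-algebra isomorphism $\mathit{SH}^\ast(T^\ast L)\cong H_{n-\ast}(\mathcal{L}L)$. The decisive computation---that $\mathit{SH}^0(T^\ast L)=H_n(\mathcal{L}L)\cong\mathbb{K}$ and that $\Delta\colon H_{n-1}(\mathcal{L}L)\to H_n(\mathcal{L}L)$ vanishes---comes from the decomposition of $\mathcal{L}L$ over conjugacy classes together with the fact that centralizers of nontrivial elements in a torsion-free word-hyperbolic group are infinite cyclic. This is the same group-theoretic input that underlies Davison's argument, so the two proofs are close cousins; what you gain is a short, self-contained derivation that avoids the Fukaya-categorical and Hochschild machinery, at the cost of leaning on the BV-compatibility of Viterbo restriction and of the loop-space isomorphism (which, as you acknowledge, is where the sign and twist bookkeeping lives). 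What Davison's approach buys is a statement phrased intrinsically at the level of group algebras, which is what the present paper builds on when introducing cyclic dilations.
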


\begin{example}\label{example:quasi-dilation}
Let $f:\mathbb{C}^{n-1}\rightarrow\mathbb{C}$ be a regular function whose zero set $f^{-1}(0)$ is smooth. Define $M$ to be the affine conic bundle over $(\mathbb{C}^\ast)^{n-1}$:
\begin{equation}\label{eq:GP}
M:=\left\{(x,y,z_1,\cdots,z_{n-1})\in\mathbb{C}^2\times(\mathbb{C}^\ast)^{n-1}|xy=f(z_1,\cdots,z_{n-1})\right\}.
\end{equation}
It is proved in $\cite{gp1}$ that $M$ admits a quasi-dilation. When $\dim_\mathbb{C}(M)=3$, stronger topological restrictions on closed exact Lagrangian submanifolds in $M$ can be established, see $\cite{gp1}$, Theorem 1.2.
\end{example}

Inspired by the work of Davison $\cite{bd}$ on the exactness of Calabi-Yau structures on fundamental group algebras, the author introduced in $\cite{yl2}$ the notion of a \textit{cyclic dilation}. This is a class $\tilde{b}\in\mathit{SH}_{S^1}^1(M)$ in the first degree $S^1$-equivariant symplectic cohomology, whose image under the marking map
\begin{equation}\label{eq:marking}
B:\mathit{SH}^\ast_{S^1}(M)\rightarrow\mathit{SH}^{\ast-1}(M)
\end{equation}
is an invertible element $h\in\mathit{SH}^0(M)^\times$. Similar notions are studied independently by Zhou $\cite{zz2}$. As a more delicate piece of structure than the BV operator, the map $B$ is defined by counting higher-dimensional families of holomorphic cylinders $\cite{bo2}$. It is proved in $\cite{yl2}$ that this condition is strictly weaker than what is required by a quasi-dilation. Thanks to an $S^1$-equivariant version of Viterbo functoriality due to Cohen-Ganatra $\cite{cg}$, we have the following improvement of Theorem \ref{theorem:dilation}:

\begin{theorem}[$\cite{yl2}$, Corollary 5.1]\label{theorem:cyclic}
Let $M$ be a Liouville manifold admitting a cyclic dilation with $h=1$. If $L\subset M$ is a closed exact Lagrangian submanifold, then $L$ cannot be a $K(\pi,1)$ space.
\end{theorem}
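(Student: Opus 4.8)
The plan is to deduce this from an $S^1$-equivariant version of the Seidel--Solomon dichotomy. Suppose for contradiction that $L\subset M$ is a closed exact Lagrangian submanifold which is a $K(\pi,1)$, with $\pi=\pi_1(L)$. Since $L$ is exact, after a Liouville deformation of $M$ a Weinstein neighborhood of $L$ embeds as a Liouville subdomain $D^\ast L\hookrightarrow M$, so Viterbo functoriality provides a unital ring homomorphism $\mathit{SH}^\ast(M)\to\mathit{SH}^\ast(D^\ast L)\cong\mathit{SH}^\ast(T^\ast L)$. By the $S^1$-equivariant refinement of Viterbo functoriality due to Cohen--Ganatra \cite{cg}, this lifts to a restriction map $\mathit{SH}^\ast_{S^1}(M)\to\mathit{SH}^\ast_{S^1}(T^\ast L)$ compatible with the marking maps $B$, the erasing maps, and the module structures over the non-equivariant theories. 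Let $\tilde{b}_L\in\mathit{SH}^1_{S^1}(T^\ast L)$ be the image of the cyclic dilation $\tilde{b}$; then $B(\tilde{b}_L)$ is the restriction of $h=1$, hence $B(\tilde{b}_L)=1\in\mathit{SH}^0(T^\ast L)$ by unitality of the restriction map.

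It therefore suffices to prove that, for a closed aspherical $L$, the unit $1\in\mathit{SH}^0(T^\ast L)$ is \emph{not} in the image of the marking map $B\colon\mathit{SH}^1_{S^1}(T^\ast L)\to\mathit{SH}^0(T^\ast L)$ --- the $S^1$-equivariant counterpart of the string-topology computation underlying Theorem \ref{theorem:dilation}. I would transport the problem to the free loop space $\mathcal{L}L$ via the Abbondandolo--Schwarz / Viterbo / Salamon--Weber isomorphism $\mathit{SH}^\ast(T^\ast L)\cong H_{\bullet}(\mathcal{L}L)$ (normalized so that $\mathit{SH}^0(T^\ast L)\cong H_{\dim L}(\mathcal{L}L)$, with unit the constant-loop fundamental class $[L]$) together with its $S^1$-equivariant enhancement $\mathit{SH}^\ast_{S^1}(T^\ast L)\cong H^{S^1}_{\bullet}(\mathcal{L}L)$, under which $B$ becomes the Connes-type string-topology operator $H^{S^1}_{\bullet}(\mathcal{L}L)\to H_{\bullet+1}(\mathcal{L}L)$. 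These operators respect the $S^1$-equivariant decomposition $\mathcal{L}L=\bigsqcup_{c}\mathcal{L}_cL$ over free homotopy classes of loops, and $[L]$ lies entirely in the constant-loop summand $\mathcal{L}_0L$; so it is enough to show that $B$ vanishes on $H^{S^1}_{\bullet}(\mathcal{L}_0L)$.

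The heart of the matter is that, for $L$ aspherical, $\mathcal{L}_0L$ is $S^1$-equivariantly weakly homotopy equivalent to $L$ with the trivial $S^1$-action. Indeed, restriction to the boundary gives an $S^1$-equivariant fibration $\mathrm{Map}(D^2,L)\to\mathcal{L}_0L$ (with $S^1$ rotating $D^2$) whose fibers are spaces of extensions of a nullhomotopic loop over the disc, hence weakly contractible because $\pi_{\geq 2}(L)=0$; and $\mathrm{Map}(D^2,L)$ retracts $S^1$-equivariantly onto its subspace of constant maps by scaling $D^2$ toward its (fixed) center, identifying it with $L$ carrying the trivial action. Consequently $H^{S^1}_{\bullet}(\mathcal{L}_0L)\cong H_{\bullet}(L)\otimes H_{\bullet}(BS^1)$ with the erasing map $H_{\bullet}(\mathcal{L}_0L)\to H^{S^1}_{\bullet}(\mathcal{L}_0L)$ split injective, so the Gysin (Connes) exact sequence forces $B=0$ on this summand; equivalently, under the Goodwillie/Burghelea identifications $H^{S^1}_{\bullet}(\mathcal{L}L)\cong HC_{\bullet}(C_\ast\Omega L)\cong HC_{\bullet}(\mathbb{Z}[\pi])$, this is the vanishing of the Connes $B$-operator on the identity-conjugacy-class summand of the cyclic homology of the group ring. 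Hence $1\notin\mathrm{im}(B)$ in $\mathit{SH}^\ast_{S^1}(T^\ast L)$, contradicting $B(\tilde{b}_L)=1$.

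The main obstacle I anticipate is not any single computation but the compatibility bookkeeping: one must check that the $S^1$-equivariant Viterbo restriction of Cohen--Ganatra is unital on the non-equivariant part and genuinely intertwines the marking maps $B$, and that the Abbondandolo--Schwarz isomorphism carries the symplectic marking map to the string-topology Connes operator, so that the topological vanishing above applies verbatim. Once these identifications are in place the argument is formal. One should also fix coefficients so that the loop-space isomorphisms hold with the correct signs --- e.g. working over $\mathbb{Z}/2$, or over a field with $L$ Spin --- which does not affect the conclusion.
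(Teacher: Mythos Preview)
This theorem is not proved in the present paper: it is quoted verbatim from \cite{yl2}, Corollary~5.1, and the only indication of the argument here is the sentence preceding the statement, which attributes it to ``an $S^1$-equivariant version of Viterbo functoriality due to Cohen--Ganatra \cite{cg}.'' Your proposal follows exactly that indicated route---restrict the cyclic dilation along the $S^1$-equivariant Viterbo map to $T^\ast L$, then rule out $1\in\mathrm{im}(B)$ via the string-topology model and the triviality of the $S^1$-action on the constant-loop component when $L$ is aspherical---and the outline is correct. There is therefore nothing in the paper to compare against beyond that one sentence, and your sketch is consistent with it; the caveats you already flag (compatibility of $B$ with the equivariant Viterbo map and with the loop-space isomorphism, and the twisted coefficients needed when $L$ is non-\textit{Spin}, cf.\ \cite{ma2}) are precisely the points one must pin down when writing this out in full.
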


\begin{example}\label{example:cyclic}
Let $M\subset\mathbb{C}^{n+1}$ be the Milnor fiber of the isolated singularity
\begin{equation}
z_1^k+\cdots+z_{n+1}^k=0,
\end{equation}
where $n\geq k\geq2$. Using the machinery developed by Diogo-Lisi $\cite{dl}$, it is proved in $\cite{zz2}$ that $M$ admits a cyclic dilation with $h=1$. For the special case when $n=k=3$, see also $\cite{yl2}$, Section 6.1 for an alternative proof.
\end{example}

On can move on and make use of the $L_\infty$-structure on the symplectic cochain complex $\mathit{SC}^\ast(M)$ to produce more delicate obstructions to exact Lagrangian embeddings. This idea, originally due to Seidel, has been carried out recently by Ganatra-Siegel $\cite{gs}$ in the framework of symplectic field theory $\cite{egh}$.

\subsection{New results}

This paper attempts to narrow the gap between the known nonexistence results for the Milnor fibers considered in Examples \ref{example:triple} and \ref{example:cyclic}, and obtain restrictions of exact Lagrangian embeddings for a class of Liouville manifolds to which, due to technical difficulties, neither Theorem \ref{theorem:dilation} nor Theorem \ref{theorem:cyclic} is applicable. More precisely, we shall consider the situation where $M$ is the Milnor fiber of an isolated singularity that is doubly stabilized, namely one defined by the equation
\begin{equation}\label{eq:double}
z_1^2+z_2^2+p(z_3,\cdots,z_{n+1})=0,
\end{equation}
where $p$ is a polynomial which has an isolated singular point at the origin. Note that when $p$ is weighted homogeneous, $M$ is an affine conic bundle over $\mathbb{C}^{n-1}$, whose discriminant locus is the smooth hypersurface defined by $p(z_3,\cdots,z_{n+1})=c$, for some $c\in\mathbb{C}^\ast$. These manifolds have been studied previously by Abouzaid-Auroux-Katzarkov in the context of SYZ (Strominger-Yau-Zaslow) mirror symmetry $\cite{aak}$.

In view of Theorem \ref{theorem:dilation} and Example \ref{example:triple}, it is natural to ask whether there exists a polynomial $p$ (with an isolated critical point at the origin) so that the corresponding Milnor fiber $M$ contains an exact Lagrangian $K(\pi,1)$. Note that for isolated hypersurface singularities that are once stabilized, the construction of Keating $\cite{ak1}$ shows that an exact Lagrangian torus exists in any 4-dimensional Milnor fiber in $\mathbb{C}^3$ defined by the equation
\begin{equation}
z_1^2+z_2^{k_2}+z_3^{k_3}=1,\textrm{ where }\frac{1}{k_2}+\frac{1}{k_3}\leq\frac{1}{2}.
\end{equation}
In fact, a genus two exact Lagrangian surface is expected to exist in the 4-dimensional Milnor fibers above once the more strict condition $\frac{1}{k_2}+\frac{1}{k_3}<\frac{1}{2}$ is imposed.

On the other hand, it follows from $\cite{ps5}$, Corollary 19.7 that any Milnor fiber $M$ of (\ref{eq:double}) admits a quasi-dilation. In particular, it follows from Theorem \ref{theorem:quasi-dilation} that $M$ does not contain any hyperbolic closed, orientable, exact Lagrangian submanifold. In the case when $p$ is a \textit{Brieskorn-Pham polynomial}, namely one of the form
\begin{equation}\label{eq:Brieskorn}
p(z_3,\cdots,z_{n+1})=z_3^{k_3}+\cdots+z_{n+1}^{k_{n+1}},
\end{equation}
where $k_3,\cdots,k_{n+1}$ are positive integers larger than 1, we prove the following strengthened result:

\begin{theorem}\label{theorem:main}
Let $M$ be the Milnor fiber of the singularity (\ref{eq:double}), where $p$ is a Brieskorn-Pham polynomial (\ref{eq:Brieskorn}). If $L\subset M$ is a closed, orientable, exact Lagrangian submanifold, then $L$ cannot be a $K(\pi,1)$ space.
\end{theorem}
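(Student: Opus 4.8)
The plan is to follow the route indicated in the abstract: realize $M$ as an affine conic bundle, compute its wrapped Fukaya category by homological mirror symmetry, deduce the finite-dimensionality of $\mathit{SH}^0(M)$ from a Hochschild cohomology computation on the mirror, and then combine this with the quasi-dilation that $M$ is already known to possess. First I would perform the linear change of coordinates $x=z_1+\sqrt{-1}\,z_2$, $y=z_1-\sqrt{-1}\,z_2$, under which $z_1^2+z_2^2=xy$, so that (passing to the Milnor fiber at level $1$) $M=\{xy=1-p(z_3,\dots,z_{n+1})\}\subset\mathbb{C}^{n+1}$ becomes an affine conic bundle $\pi:M\to\mathbb{C}^{n-1}$ whose discriminant is the smooth Brieskorn--Pham hypersurface $\Sigma=\{p=1\}$.

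Next I would compute $\mathcal{W}(M)$ using the suspension/Lefschetz-fibration structure of $M$: projecting to one of the coordinates $z_i$ presents $M$ as a Lefschetz fibration whose regular fiber is a lower-dimensional Milnor fiber of the same shape, which reduces the problem inductively to the wrapped Fukaya categories of the Brieskorn--Pham hypersurface $\Sigma$ and of the base $\mathbb{C}^{n-1}$ --- objects for which homological mirror symmetry is available, via the mirror symmetry for invertible polynomials/Berglund--H\"ubsch duality together with the conic-bundle picture of \cite{aak}. Assembling the pieces, by a gluing/generation argument for wrapped categories and matching the structure constants with the algebraic side, should yield an equivalence of $\mathcal{W}(M)$ with an explicit mirror $Y$: either the bounded derived category of coherent sheaves, or a category of matrix factorizations, of a space which is the total space of a negative vector bundle over a proper base (a weighted projective complete intersection, or a proper Deligne--Mumford stack). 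The essential feature of this mirror is that its ring of global functions is finite-dimensional.

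With the equivalence in hand, I would invoke the identification of symplectic cohomology with the Hochschild cohomology of the wrapped Fukaya category for the Weinstein manifold $M$, transport it through the mirror equivalence, and compute: by Hochschild--Kostant--Rosenberg (respectively the analogous computation for matrix factorizations), $\mathit{SH}^0(M)\cong\mathit{HH}^0(Y)\cong H^0(Y,\mathcal{O}_Y)$ (respectively a Jacobian-type ring), which is finite-dimensional by the previous paragraph. Thus $\mathit{SH}^0(M)$ is a finite-dimensional, hence Artinian, ring; since $M$ --- and therefore its mirror --- is connected, it is in fact a local Artinian algebra, so every invertible element of $\mathit{SH}^0(M)$ is a nonzero scalar plus a nilpotent element.

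Finally, by \cite{ps5}, Corollary 19.7, $M$ admits a quasi-dilation $(h,b)$, which by \cite{yl2} yields a cyclic dilation $\tilde b\in\mathit{SH}_{S^1}^1(M)$ with $B(\tilde b)=h\in\mathit{SH}^0(M)^\times$. Using the local Artinian structure of $\mathit{SH}^0(M)$ together with the compatibilities of the marking map $B$ with the module structure and the product on $\mathit{SH}^0(M)$, I would normalize $h$ to $1$: after rescaling one has $h=1+\nu$ with $\nu$ nilpotent, and a finite correction absorbs $\nu$, producing a cyclic dilation $\tilde b'$ with $B(\tilde b')=1$. (If the computation in fact yields $\mathit{SH}^0(M)$ isomorphic to the ground field, this step is automatic and one may even invoke Theorem \ref{theorem:dilation} directly.) Theorem \ref{theorem:cyclic} then applies and shows that a closed, orientable, exact Lagrangian $L\subset M$ cannot be a $K(\pi,1)$ space --- orientability entering through the Floer-theoretic input underlying that theorem. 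I expect the main obstacle to be the homological mirror symmetry statement of the second paragraph: pinning down the mirror $Y$ precisely and proving the equivalence $\mathcal{W}(M)\simeq$ (its B-side) requires a careful Weinstein/Lefschetz model of $M$, controlling the vanishing cycles and stops of the suspension fibration and gluing the wrapped categories of the pieces, together with the Brieskorn--Pham base case; matching the structure constants on the two sides is where the real work lies, whereas the Hochschild computation and the Artinian normalization should be comparatively formal.
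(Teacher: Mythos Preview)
Your overall strategy --- establish homological mirror symmetry for $\mathcal{W}(M)$, deduce that $\mathit{SH}^0(M)$ is finite-dimensional via a Hochschild computation on the mirror, and combine this with the known quasi-dilation --- matches the paper's. The genuine gap is in your endgame: the normalization step where you pass from a cyclic dilation with $B(\tilde b)=h=\lambda(1+\nu)$ (with $\nu$ nilpotent) to one with $B(\tilde b')=1$.

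You invoke ``compatibilities of the marking map $B$ with the module structure and the product on $\mathit{SH}^0(M)$'', but no such compatibility is available: the marking map $B:\mathit{SH}^\ast_{S^1}(M)\to\mathit{SH}^{\ast-1}(M)$ is the connecting map in a Gysin-type long exact sequence and is \emph{not} $\mathit{SH}^0(M)$-linear for any natural module structure on $\mathit{SH}^\ast_{S^1}(M)$. If it were, the distinction between ``cyclic dilation with $h=1$'' and ``cyclic dilation with arbitrary invertible $h$'' would collapse whenever $\mathit{SH}^0(M)$ is local Artinian --- and indeed the paper leaves the existence of a dilation (or of a cyclic dilation with $h=1$) on these Milnor fibers as an open problem, Conjecture~\ref{conjecture:dilation}. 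So the sentence ``a finite correction absorbs $\nu$'' has no mechanism behind it. A secondary issue: your assertion that $\mathit{SH}^0(M)$ is \emph{local} because $M$ is connected is not justified; finite-dimensionality only gives a product of local rings, and the paper's computation (Proposition~\ref{proposition:HH}) yields $\dim_\mathbb{K}\mathit{SH}^0(M)=k_3-1$, not $1$.

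The paper closes this gap by a different, contrapositive argument (Lemma~\ref{lemma:0}). Assume a closed orientable exact Lagrangian $K(\pi,1)$ does exist. Then by Theorem~\ref{theorem:dilation} there is no dilation, so the invertible $h$ in the quasi-dilation is not a scalar. Viterbo functoriality sends $h$ to a nontrivial central unit in $\mathit{SH}^0(T^\ast L)\cong Z(\mathbb{K}[\pi_1(L)])$; since $\pi_1(L)$ is torsion-free, a theorem of \"Oinert on central units in group algebras (Theorem~\ref{theorem:unit}) forces this unit --- and hence $h$ itself --- to have vanishing constant term, i.e.\ $h\in\mathit{SH}^0_+(M)$. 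Applying the same reasoning to every power $h^m$ shows that $1,h,h^2,\dots$ are linearly independent, so $\mathit{SH}^0(M)$ is infinite-dimensional, contradicting the HMS/Hochschild computation. (The general Brieskorn--Pham case is then reduced, by a Liouville embedding, to the case where the exponents are distinct primes, for which the Hochschild computation is carried out.) In short: rather than upgrading the quasi-dilation to $h=1$, the paper shows that finite-dimensionality of $\mathit{SH}^0(M)$ is already incompatible with the presence of a $K(\pi,1)$.
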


\begin{remark}
Note that here (and also in Theorem \ref{theorem:quasi-dilation}) we have restricted ourselves to the situation where $L$ is orientable, while in the statements of Theorems \ref{theorem:dilation} and \ref{theorem:cyclic} we do not. This is because Theorems \ref{theorem:dilation} and \ref{theorem:cyclic} can be proved with the help of Viterbo functoriality $\mathit{SH}^\ast(M)\rightarrow\mathit{SH}^\ast(T^\ast L)$, which holds for non-orientable exact Lagrangian submanifolds $L\subset M$ by considering symplectic cohomologies with local coefficients $\cite{ma2}$. However, for the proofs of Theorems \ref{theorem:quasi-dilation} and \ref{theorem:main}, considerations of open string invariants are required, see $\cite{bd}$, Proposition 5.2.6, and our argument in Lemma \ref{lemma:0}.
\end{remark}

\begin{corollary}\label{corollary:dim3}
Let $M$ be as in Theorem \ref{theorem:main}, and $n=3$. If $L\subset M$ is a closed exact Lagrangian submanifold, then $L$ cannot be diffeomorphic to $S^1\times\Sigma_g$, where $\Sigma_g$ is a closed orientable surface with genus $g\geq1$.
\end{corollary}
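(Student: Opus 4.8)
The plan is to obtain this as an immediate consequence of Theorem \ref{theorem:main}, the only content being the observation that $S^1\times\Sigma_g$ with $g\geq1$ is a closed, orientable, aspherical $3$-manifold, and that this is the correct dimension: when $n=3$ the manifold $M\subset\mathbb{C}^4$ has $\dim_{\mathbb{R}}M=6$, so every closed Lagrangian $L\subset M$ is a closed $3$-manifold, and $S^1\times\Sigma_g$ is indeed $3$-dimensional.

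First I would check that for $g\geq1$ the closed surface $\Sigma_g$ is a $K(\pi,1)$: for $g=1$ it is the torus $T^2=K(\mathbb{Z}^2,1)$, while for $g\geq2$ it carries a hyperbolic metric, so its universal cover is $\mathbb{R}^2$ and all higher homotopy groups vanish. Since $S^1=K(\mathbb{Z},1)$ and a product of aspherical spaces is aspherical, the universal cover of $S^1\times\Sigma_g$ is $\mathbb{R}^3$; hence $S^1\times\Sigma_g$ is a $K(\mathbb{Z}\times\pi_1(\Sigma_g),1)$. It is moreover closed and orientable, being a product of closed orientable manifolds.

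Now suppose, for contradiction, that $L\subset M$ is a closed exact Lagrangian submanifold diffeomorphic to $S^1\times\Sigma_g$ for some $g\geq1$. Then $L$ is automatically closed, orientable, and a $K(\pi,1)$ space, which directly contradicts Theorem \ref{theorem:main}. Therefore no such $L$ exists. There is essentially no obstacle in this argument --- all of the work is already contained in Theorem \ref{theorem:main} --- and the hypothesis $g\geq1$ is exactly what the method needs: when $g=0$ one has $\pi_2(S^1\times S^2)\neq0$, so $S^1\times S^2$ is not aspherical and the theorem gives no information in that case.
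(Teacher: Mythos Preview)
Your argument is correct and is exactly the intended one: the paper gives no explicit proof for this corollary, treating it as immediate from Theorem~\ref{theorem:main}, and you have simply written out the verification that $S^1\times\Sigma_g$ with $g\geq1$ is a closed orientable $K(\pi,1)$ three-manifold.
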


This result is optimal in the sense that some of the $3$-dimensional affine conic bundles $M$ contain exact Lagrangian $S^1\times S^2$'s. In particular, $\cite{ak1}$, Proposition 1.2 shows that this is the case as long as $\frac{1}{k_3}+\frac{1}{k_4}\leq\frac{1}{2}$.

\begin{remark}
In fact, we can drop the assumption that $p$ is a Brieskorn-Pham polynomial in dimension 3, thanks to the work of Habermann-Smith $\cite{hs}$. See the discussions in Section \ref{section:cyclic}.
\end{remark}

It would be convenient to give here an outline of the idea behind the proof. Suppose that $L\subset M$ is an exact Lagrangian $K(\pi,1)$ space, then it follows from Theorem \ref{theorem:dilation} that $M$ does not admit a dilation. Since it is known that $M$ admits a quasi-dilation, there must be some invertible element $h\in\mathit{SH}^0(M)^\times$ which differs from the identity. We prove in Lemma \ref{lemma:0} that there are action constraints that $h$ has to satisfy, and as a consequence the existence of such an element would result in the infinite-dimensionality of $\mathit{SH}^0(M)$. So in order to prove Theorem \ref{theorem:main}, it suffices to show that $\mathit{SH}^0(M)$ is finite-dimensional.

For $M$ a Weinstein manifold, Ganatra proves in his thesis $\cite{sg2}$ that the closed-open string map
\begin{equation}\label{eq:CO}
\mathit{CO}:\mathit{SH}^\ast(M)\rightarrow\mathit{HH}^\ast(\mathcal{W}(M))
\end{equation}
is an isomorphism, where the right-hand side is the Hochschild cohomology of the wrapped Fukaya category $\mathcal{W}(M)$, whose construction is due to Abouzaid-Seidel $\cite{as}$. This suggests that symplectic cohomology can be computed algebraically.

For a weighted homogeneous polynomial $w\in\mathbb{K}[z_1,\cdots,z_{n+1}]$ given by
\begin{equation}
w=\sum_{i=1}^{n+1}\prod_{j=1}^{n+1}z_j^{a_{ij}},
\end{equation}
we say that it is \textit{invertible} if the corresponding matrix of powers $A=(a_{ij})$ is invertible. Denote by $\check{w}$ its transpose, which is the weighted homogeneous polynomial specified by the transpose of $A$. Homological mirror symmetry predicts in this case the existence of a triangulated equivalence
\begin{equation}\label{eq:HMS-conj}
D^\mathit{perf}\mathit{MF}(\mathbb{K}^{n+2},\Gamma_w,w+z_0\cdots z_{n+1})\cong D^\mathit{perf}\mathcal{W}(\check{w}^{-1}(1)),
\end{equation}
where $\Gamma_w$ is a finite extension of the multiplicative group $\mathbb{G}_m$ defined by
\begin{equation}\label{eq:group}
\Gamma_w:=\left\{(t_0,\cdots,t_{n+1})\in(\mathbb{G}_m)^{n+2}|t_1^{a_{1,1}}\cdots t_{n+1}^{a_{1,n+1}}=\cdots=t_1^{a_{n+1,1}}\cdots t_{n+1}^{a_{n+1,n+1}}=t_0\cdots t_{n+1}\right\},
\end{equation}
$\mathit{MF}(\mathbb{K}^{n+2},\Gamma_w,w+z_0\cdots z_{n+1})$ is the dg category of $\Gamma_w$-equivariant matrix factorizations of $w+z_0\cdots z_{n+1}$ on $\mathbb{K}^{n+2}$, and $D^\mathit{perf}\mathit{MF}(\mathbb{K}^{n+2},\Gamma_w,w+z_0\cdots z_{n+1})$ is its split-closed derived category. 

This is one of the homological mirror symmetry conjectures concerning the Berglund-H\"{u}bsch transpose of an invertible polynomial, see $\cite{lu1}$, Conjecture 1.4. We prove that (\ref{eq:HMS-conj}) holds when $w$ is of the form (\ref{eq:double}), and $p$ is given by (\ref{eq:Brieskorn}). In particular, $w=\check{w}$ in these situations.

\begin{theorem}\label{theorem:HMS}
For
\begin{equation}\label{eq:conic}
w(z_1.\cdots.z_{n+1})=z_1^2+z_2^2+p(z_3,\cdots,z_{n+1}),
\end{equation}
where $p$ is a Brieskorn-Pham polynomial, we have the following equivalence between $\mathbb{Z}$-graded triangulated categories
\begin{equation}\label{eq:HMS}
D^\mathit{perf}\mathit{MF}(\mathbb{K}^{n+2},\Gamma_w,w+z_0\cdots z_{n+1})\cong D^\mathit{perf}\mathcal{W}(M),
\end{equation}
where $M$ is the Milnor fiber of $w$.
\end{theorem}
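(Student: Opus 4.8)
The plan is to prove \eqref{eq:HMS} by exhibiting finite split-generating subcategories on both sides whose morphism $A_\infty$-algebras are quasi-isomorphic. Write $\tilde w := w+z_0\cdots z_{n+1}$; note that $\check w=w$ here, so the B-side involves the same family of polynomials. Both $D^\mathit{perf}\mathit{MF}(\mathbb{K}^{n+2},\Gamma_w,\tilde w)$ and $D^\mathit{perf}\mathcal W(M)$ are split-closures of derived categories of $\mathbb{Z}$-graded $A_\infty$-categories over $\mathbb K$, so it suffices to produce a full subcategory $\mathcal B$ of the former and $\mathcal A$ of the latter, each split-generating, together with an $A_\infty$-quasi-equivalence $\mathcal A\simeq\mathcal B$. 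I would arrange $\mathcal A$ and $\mathcal B$ to have the same finite set of objects and a common cohomology-level morphism algebra $H^\ast(\mathcal A)\cong H^\ast(\mathcal B)=:\Lambda$, an explicit graded $\mathbb K$-algebra built from the exponents $(2,2,k_3,\dots,k_{n+1})$; the problem then splits into (i) a B-side computation, (ii) an A-side computation, and (iii) an $A_\infty$-level comparison.

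For (i): the polynomial $\tilde w$ is again an invertible polynomial in the $n+2$ variables $z_0,\dots,z_{n+1}$, and by construction $\Gamma_w$ is precisely its group of diagonal symmetries extended by the grading $\mathbb G_m$. The first step is to eliminate the quadratic variables $z_1,z_2$ via Knörrer periodicity: the part of $\tilde w$ involving $z_1,z_2$ is the binary quadratic form $z_1^2+z_2^2+(z_0z_3\cdots z_{n+1})z_1z_2$, which is nondegenerate away from the discriminant $\{(z_0z_3\cdots z_{n+1})^2=4\}$, so a relative version of Knörrer periodicity (twisting by the double cover of the base along this discriminant) reduces $\mathit{MF}(\mathbb K^{n+2},\Gamma_w,\tilde w)$ to an equivariant matrix factorization category over $\mathbb C^{n}$ attached to the Brieskorn--Pham potential $p$ and to the conic-bundle discriminant --- the algebraic shadow of the conic-bundle structure of $M$ studied in \cite{aak}. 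An alternative is to invoke Orlov's theorem to pass from the left-hand side of \eqref{eq:HMS} to $D^b\mathit{Coh}(Y)$ for an explicit projective variety $Y$ cut out by $\tilde w$ and then use known homological mirror symmetry for $Y$. Either way one reads off a generating collection with graded endomorphism algebra $\Lambda$, and the remaining $A_\infty$-products are constrained by the internal grading together with the vanishing of the relevant bigraded piece of $\mathit{HH}^\ast(\Lambda)$.

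For (ii): $M$ is a Weinstein manifold --- the Milnor fiber of the weighted-homogeneous isolated singularity $w$, equivalently an affine conic bundle over $\mathbb C^{n-1}$ with smooth Brieskorn--Pham discriminant $B=\{p=1\}$. I would use a Weinstein handle decomposition of $M$ coming from a Lefschetz fibration to split-generate $\mathcal W(M)$ by the Lefschetz thimbles (their wrapped endomorphism $A_\infty$-algebra being accessible through the associated directed Fukaya--Seidel data), or alternatively use sectorial descent along the conic-bundle projection to express $\mathcal W(M)$ in terms of the base $\mathbb C^{n-1}$ stopped along $B$. Because $w$ is a double suspension of the Brieskorn--Pham polynomial $p$, the underlying Lagrangians and their Floer products can be pushed down to those of the lower-dimensional Milnor fiber of $p$, for which the computation is classical; one checks that the resulting wrapped Floer algebra is again $\Lambda$. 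Finally, for (iii), having matched $H^\ast(\mathcal A)\cong\Lambda\cong H^\ast(\mathcal B)$, one must rule out exotic $A_\infty$-deformations: I would run a moduli-of-$A_\infty$-structures argument (in the style of Lekili--Polishchuk), using the $\mathbb Z$-grading to bound the moduli and the $\Gamma_w$-equivariance on the B-side together with an explicit nonvanishing Floer product on the A-side to pin both categories to the same point. In dimension $n=3$ this comparison can instead be quoted directly from Habermann--Smith \cite{hs}.

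I expect the principal obstacle to be the A-side: the wrapped Fukaya category of the \emph{open} manifold $M$ is considerably harder to pin down than the compact Fukaya--Seidel category, so the crux is to set up a genuinely computable model for $\mathcal W(M)$ --- carefully controlling the generation statement, the gradings, and the continuation/wrapping maps --- and then to carry the resulting $A_\infty$-algebra through the comparison (iii). The B-side manipulations, by contrast, are essentially formal once the relative Knörrer reduction is set up correctly, and the bookkeeping of the finite extension $\Gamma_w$ of $\mathbb G_m$, while delicate, should not present an essential difficulty.
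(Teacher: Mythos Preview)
Your outline is a recognizable HMS strategy but takes a genuinely different route from the paper, and the paper's route sidesteps exactly the step you flag as the main obstacle. The paper never computes wrapped Floer theory directly, never runs a moduli-of-$A_\infty$-structures argument, and never performs a relative Kn\"orrer reduction; instead both sides are identified with the \emph{same} explicit algebra, the $n$-Calabi--Yau completion $\Pi_n(\mathcal{F}_{\tilde p})$ of the directed Fukaya--Seidel algebra of $p$. On the A-side, Seidel's double-suspension formula computes the \emph{compact} Fukaya algebra as the cyclic completion $\mathcal{F}_M\cong\mathcal{F}_{\tilde p}\oplus\mathcal{F}_{\tilde p}^\vee[-n]$; the paper then establishes Koszul duality $\mathcal{W}_M\cong(\mathrm{B}\mathcal{F}_M)^\#$ between the compact and wrapped Fukaya algebras (via Legendrian surgery and a diagonal/completeness argument), and invokes the algebraic fact that the Koszul dual of a cyclic completion is a Calabi--Yau completion, giving $\mathcal{W}_M\cong\Pi_n(\mathcal{F}_{\tilde p}^!)\cong\Pi_n(\mathcal{F}_{\tilde p})$ (the directed algebra being self-Koszul-dual). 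On the B-side, the monomial $z_0\cdots z_{n+1}$ lies above the Newton diagonal thanks to the two quadratic variables and is removed by a $\Gamma_w$-equivariant right equivalence; then a result of Lekili--Ueda says that adjoining the free variable $z_0$ to $\mathit{MF}(\mathbb{K}^{n+1},\Gamma_w,w)$ replaces the endomorphism algebra of the standard generator by its Calabi--Yau completion, and Futaki--Ueda's HMS for Brieskorn--Pham polynomials identifies that endomorphism algebra with $\mathcal{F}_{\tilde p}$. The match is then on the nose, with no deformation-theoretic rigidification needed. What the paper's approach buys is that all the symplectic input is on the compact side (vanishing cycles, suspension), where things are finite-dimensional and formal; Koszul duality then manufactures the wrapped category for free. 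Your approach would in principle work but requires building a computable model of $\mathcal{W}(M)$ from scratch and then a separate $\mathit{HH}^\ast$ computation to rigidify. One imprecision in your step (ii): Lefschetz thimbles live in the Fukaya--Seidel category of the ambient $\mathbb{C}^{n+1}$, not in $\mathcal{W}(M)$; the generators of $\mathcal{W}(M)$ are the cocore discs dual to the vanishing cycles, and it is precisely their endomorphism algebra that Koszul duality accesses from the compact side.
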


\begin{remark}
During the preparation of this paper, the author learned that the $\mathbb{Z}/2$-graded version of $\cite{lu1}$, Conjecture 1.4 has recently been proved by Gammage $\cite{bg}$ via microlocal sheaf calculations. Apart from the $\mathbb{Z}$-grading, which is crucial for our applications, our proof in this special case has the advantage that the wrapped Fukaya category $\mathcal{W}(M)$ can be identified explicitly, and it is shown to be related to the Fukaya category $\mathcal{F}(M)$ of compact Lagrangians via $A_\infty$-Koszul duality. This leads to interesting applications, see Proposition \ref{proposition:PSS} below.
\end{remark}

Combining Theorem \ref{theorem:HMS} and the isomorphism (\ref{eq:CO}), the symplectic cohomology group $\mathit{SH}^\ast(M)$ can be computed as the Hochschild cohomology 
\begin{equation}
\mathit{HH}^\ast\left(\mathit{MF}(\mathbb{K}^{n+2},\Gamma_w,w+z_0\cdots z_{n+1})\right)\cong\mathit{HH}^\ast\left(\mathit{MF}(\mathbb{K}^{n+2},\Gamma_w,w)\right),
\end{equation}
while for the latter we have very nice computational tools recorded in the literature, see Section \ref{section:Hochschild}.

To prove Theorem \ref{theorem:main}, one needs to show that $\mathit{HH}^0\left(\mathit{MF}(\mathbb{K}^{n+2},\Gamma_w,w)\right)$ is finite-dimensional. This is done in Proposition \ref{proposition:HH}. In fact, combining our computation with Koszul duality (Proposition \ref{proposition:Koszul}) also yields the following fact which seems to be of independent interest:

\begin{proposition}\label{proposition:PSS}
Let $M$ be as in Theorem \ref{theorem:HMS}, and assume that the powers $k_3,\cdots,k_{n+1}$ are distinct prime numbers, then the $n$th degree PSS (Piunikhin-Salamon-Schwarz) map $H^n(M;\mathbb{K})\rightarrow\mathit{SH}^n(M)$ is an isomorphism.
\end{proposition}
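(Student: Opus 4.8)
The plan is to derive Proposition~\ref{proposition:PSS} as a corollary of Theorem~\ref{theorem:HMS} together with the Koszul duality between $\mathcal{W}(M)$ and the compact Fukaya category $\mathcal{F}(M)$ alluded to in the remark after Theorem~\ref{theorem:HMS}. First I would invoke the identification $\mathit{SH}^\ast(M)\cong\mathit{HH}^\ast(\mathcal{W}(M))$ from (\ref{eq:CO}), and then pass through the mirror equivalence (\ref{eq:HMS}) to rewrite the right-hand side as $\mathit{HH}^\ast\bigl(\mathit{MF}(\mathbb{K}^{n+2},\Gamma_w,w+z_0\cdots z_{n+1})\bigr)\cong\mathit{HH}^\ast\bigl(\mathit{MF}(\mathbb{K}^{n+2},\Gamma_w,w)\bigr)$, which is the object computed in Section~\ref{section:Hochschild} and whose degree-$0$ part is analyzed in Proposition~\ref{proposition:HH}. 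The point of restricting to $k_3,\dots,k_{n+1}$ distinct primes is that this coprimality makes the relevant graded pieces of the Milnor ring / Jacobian ring of $w$ collapse: the only internal degrees contributing to $\mathit{HH}^n$ are those forced by the equivariance constraints defining $\Gamma_w$, and with distinct prime exponents one can read off from the weight bookkeeping that $\mathit{HH}^n\bigl(\mathit{MF}(\mathbb{K}^{n+2},\Gamma_w,w)\bigr)$ has the same dimension as $H^n(M;\mathbb{K})$. So the strategy is: compute both sides and match dimensions in degree $n$, then promote the abstract dimension count to the statement that the PSS map is an isomorphism.

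The promotion step is where Koszul duality enters. The PSS map $H^\ast(M;\mathbb{K})\to\mathit{SH}^\ast(M)$ is, under the dictionary of Section~\ref{section:cyclic} (or its analogue), the map on Hochschild cohomology induced by the inclusion/restriction relating $\mathcal{F}(M)$ and $\mathcal{W}(M)$, because $H^\ast(M;\mathbb{K})$ is recovered as $\mathit{HH}^\ast(\mathcal{F}(M))$ — or more precisely as the morphism space from the generator of $\mathcal{F}(M)$ to itself in $\mathcal{W}(M)$ — and the PSS map is the closed–open comparison between these two. Concretely, if $\mathcal{B}$ denotes the endomorphism $A_\infty$-algebra of a Lagrangian generator in $\mathcal{F}(M)$ and $\mathcal{A}$ its Koszul dual (the wrapped endomorphism algebra), then the PSS map fits into the commutative square relating $\mathit{HH}^\ast(\mathcal{B})$, $\mathit{HH}^\ast(\mathcal{A})$, and the two symplectic cohomology flavors. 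I would use Proposition~\ref{proposition:Koszul} to identify this square explicitly in terms of matrix factorizations, so that the degree-$n$ PSS map becomes an explicit linear algebra map between the two Hochschild groups computed above, and then check it is injective (or surjective) — injectivity plus the equality of dimensions gives the isomorphism.

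The main obstacle, I expect, is controlling the PSS map rather than the two dimension counts: showing that the abstract isomorphism $H^n(M;\mathbb{K})\cong\mathit{SH}^n(M)$ of vector spaces is actually realized by the PSS map and not merely a coincidence of dimensions. This requires knowing that the Koszul-duality functor intertwines the geometric PSS map with its algebraic counterpart (the natural map $\mathit{HH}^\ast(\mathcal{F}(M))\to\mathit{HH}^\ast(\mathcal{W}(M))$), which in turn rests on the compatibility of the closed–open maps for $\mathcal{F}(M)$ and $\mathcal{W}(M)$ with the $A_\infty$-functor $\mathcal{F}(M)\to\mathcal{W}(M)$ — a compatibility that should follow from the TQFT structure underlying both maps but needs to be stated carefully in our setting. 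A secondary subtlety is purely arithmetic: verifying that the distinct-primes hypothesis is exactly what forces the "extra" classes in $\mathit{SH}^n$ (those not in the image of PSS in general) to vanish; here I would track the $\Gamma_w$-grading on the Milnor ring of $w=z_1^2+z_2^2+z_3^{k_3}+\cdots+z_{n+1}^{k_{n+1}}$ and observe that the monomials of the relevant internal degree all involve a $z_1$ or $z_2$ factor and hence die in the Jacobian ring once the coprimality of the $k_i$ is used, leaving only the top class. Once these two points are in place, the proposition follows by combining them with Proposition~\ref{proposition:HH}.
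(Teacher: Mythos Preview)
Your dimension-count half is correct and matches the paper: one invokes (\ref{eq:CO}), Theorem~\ref{theorem:HMS}, and the computation (\ref{eq:n}) in Proposition~\ref{proposition:HH} to see that $\dim_\mathbb{K}\mathit{SH}^n(M)=\mu=\dim_\mathbb{K}H^n(M;\mathbb{K})$. You also correctly diagnose that the nontrivial step is upgrading this to the assertion that the PSS map itself is an isomorphism.

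Your proposed mechanism for that upgrade, however, contains a genuine error. You write that ``$H^\ast(M;\mathbb{K})$ is recovered as $\mathit{HH}^\ast(\mathcal{F}(M))$'' and that the PSS map should be realized as the natural map $\mathit{HH}^\ast(\mathcal{F}(M))\to\mathit{HH}^\ast(\mathcal{W}(M))$. But precisely because Koszul duality holds here (Proposition~\ref{proposition:Koszul}), one has $\mathit{HH}^\ast(\mathcal{F}_M)\cong\mathit{HH}^\ast(\mathcal{W}_M)\cong\mathit{SH}^\ast(M)$: Koszul dual algebras have isomorphic Hochschild \emph{cohomology}. So the source of your map is already $\mathit{SH}^\ast(M)$, not $H^\ast(M;\mathbb{K})$, and the ``natural map'' you are describing is an isomorphism by construction rather than a model for PSS. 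Your fallback identification of $H^\ast(M;\mathbb{K})$ with endomorphisms of the compact generator inside $\mathcal{W}(M)$ is likewise off: those endomorphisms are just $\mathcal{F}_M$ again. There is no clean Hochschild-cohomological model of the PSS map along the lines you sketch.

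The paper instead routes everything through Hochschild \emph{homology} and $K$-theory, using Lazarev's commutative square
\[
\begin{tikzcd}
H^n(M;\mathbb{K}) \arrow[d,"{\mathscr{L}}"'] \arrow[r,"{\mathit{PSS}}"] & \mathit{SH}^n(M) \\
K_0(\mathcal{W}(M)) \arrow[r,"{\mathit{tr}}"] & \mathit{HH}_0(\mathcal{W}(M)) \arrow[u,"{\mathit{OC}}"']
\end{tikzcd}
\]
where $\mathscr{L}$ is surjective and $\mathit{OC}$ is an isomorphism. The remaining task is to show that the Dennis trace $\mathit{tr}$ is surjective. Here Koszul duality is used, but on Hochschild \emph{homology}: it yields $\mathit{HH}_0(\mathcal{W}(M))\cong\hom_\mathbb{K}\bigl(\mathit{HH}_0(\mathcal{F}(M)),\mathbb{K}\bigr)$, under which the classes $[e_{L_i}]=\mathit{tr}([L_i])$ of the cocores become the duals of the classes $[e_{V_i}]$ of the vanishing cycles. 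The dimension count $\dim\mathit{HH}_0(\mathcal{W}(M))=\mu$ then forces these $\mu$ classes to be a basis, so $\mathit{tr}$ is onto, hence $\mathit{PSS}$ is onto, hence an isomorphism. The missing idea in your proposal is this $K_0$/Dennis-trace factorization; without it you have no concrete handle on the PSS map.
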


This paper is organized as follows: in Section \ref{section:HMS}, we compute the wrapped Fukaya category $\mathcal{W}(M)$ using Koszul duality and compare it with the matrix factorization category, which leads to a proof Theorem \ref{theorem:HMS} in Section \ref{section:MF}. Section \ref{section:Hochschild} is devoted to the computation of the Hochschild cohomology group $\mathit{HH}^\ast\left(\mathit{MF}(\mathbb{K}^{n+2},\Gamma_w,w)\right)$. Based on this computation, we give a proof of the main Theorem \ref{theorem:main} in Section \ref{section:argument}. Possible generalizations of Theorem \ref{theorem:main} are discussed in Section \ref{section:cyclic}, the final section of this paper. In Appendix \ref{section:A}, we prove Theorem \ref{theorem:unit}, an algebraic fact which is used in the proof of Theorem \ref{theorem:main}.

\section*{Acknowledgements}
This paper was written during the Covid-19 lockdown in the United Kingdom, when the author was unable to travel to the United States to officially start his postdoc position. The author would like to thank The University of Edinburgh for providing a temporary position and funding support for his research, and various people, especially Mohammed Abouzaid, Sharon Greig, and Nick Sheridan for their efforts to make this possible.

After the paper was written, I realized that the idea that Lemma \ref{lemma:0} should hold in the special case when $L$ is a torus was previously pointed out to me by Daniel Pomerleano during private conversations in 2019. Later I forgot the idea and went on to study other subjects, and then rediscovered it recently when reading $\cite{dt}$. I would like to thank him for the inspiring conversation.

\section{Homological mirror symmetry}\label{section:HMS}

From now on, $\mathbb{K}$ will be a field of characteristic zero. We will be only dealing with Liouville manifolds $M$ with $c_1(M)=0$. Actually, we fix a trivialization of their canonical bundle $K_M$ so that the Fukaya categories are $\mathbb{Z}$-graded $A_\infty$-categories over $\mathbb{K}$. All the Liouville manifolds considered in this paper are finite type, which means they are obtained by attaching a half-infinite cylindrical end $[0,\infty)\times\partial\overline{M}$ to the Liouville domain $\overline{M}$. 

\subsection{Koszul duality}\label{section:Koszul}

Let $M$ be the Milnor fiber of the Brieskorn-Pham singularity
\begin{equation}
z_1^{k_1}+\cdots+z_{n+1}^{k_{n+1}}=0,
\end{equation}
where $k_1,\cdots,k_{n+1}>1$ are positive integers satisfying
\begin{equation}\label{eq:weight}
\frac{1}{k_1}+\cdots\frac{1}{k_{n+1}}\neq1.
\end{equation}
Denote by $\mathcal{F}(M)$ its Fukaya category whose objects consist of closed, exact Lagrangian submanifolds which are \textit{Spin} and graded $\cite{ps9}$, and by $\mathcal{W}(M)$ its wrapped Fukaya category, which also allows certain non-compact Lagrangians as its objects $\cite{as}$.

Let $V_1,\cdots,V_\mu\subset M$ be a basis of vanishing cycles, where
\begin{equation}
\mu:=(k_1-1)(k_2-1)\cdots(k_{n+1}-1)
\end{equation}
is the Milnor number of $M$. By $\cite{ps9}$, Lemmas 4.15 and 4.16, (\ref{eq:weight}) implies that there is a non-zero integer $d$ such that
\begin{equation}\label{eq:Dehn}
(\tau_{V_1}\circ\cdots\circ\tau_{V_\mu})^{k_1\cdots k_{n+1}}=[2d],
\end{equation}
where $\tau_{V_i}$ is the Dehn twist along the Lagrangian sphere $V_i$. Applying Seidel's long exact sequence $\cite{ps10}$, we conclude that $\mathcal{F}(M)$ is split-generated by the objects $V_1,\cdots,V_\mu$. Define
\begin{equation}
\mathcal{F}_M:=\bigoplus_{1\leq i,j\leq\mu}\mathit{CF}^\ast(V_i,V_j)
\end{equation}
to be the Fukaya $A_\infty$-algebra of vanishing cycles. This is a $\mathbb{Z}$-graded proper $A_\infty$-algebra over the semisimple ring $\Bbbk:=\bigoplus_{i=1}^\mu\mathbb{K}e_i$. We have the quasi-equivalence
\begin{equation}\label{eq:F}
\mathcal{F}(M)^\mathit{perf}\cong\mathcal{F}_M^\mathit{perf}
\end{equation}
between triangulated $A_\infty$-categories, where the left-hand side above is the $A_\infty$-category of perfect (right) $A_\infty$-modules over $\mathcal{F}(M)$, while the right-hand side is the $A_\infty$-category of perfect modules over the $A_\infty$-algebra $\mathcal{F}_M$.

On the other hand, as a Weinstein manifold $M$ can be constructed by attaching $n$-handles to the standard $2n$-dimensional closed symplectic disc $D^{2n}$ along a link of $\mu$ Legendrian $(n-1)$-dimensional spheres which are unknots along the boundary $S^{2n-1}$. To see this, one can start with the Lefschetz fibration $\tilde{f}:D^{2n}\rightarrow D^2$ defined by the Morsification of the Brieskorn-Pham polynomial
\begin{equation}
f(z_1,\cdots,z_n):=z_1^{k_1}+\cdots+z_n^{k_n}.
\end{equation}
This Lefschetz presentation of $D^{2n}$ describes it as the result of attaching $\frac{\mu}{k_{n+1}-1}$ Weinstein $n$-handles along the (vertical) boundary of $F\times D^2$, where $F$ is the Milnor fiber of the isolated singularity $\{f=0\}\subset\mathbb{C}^n$. Starting from $\tilde{f}$, one can construct another Lefschetz fibration $\pi:\overline{M}\rightarrow D^2$, whose smooth fiber is symplectomorphic to $F$. See for example $\cite{ak1}$, Section 2.5. The Lefschetz fibration $\pi$ has $\frac{k_{n+1}\cdot\mu}{k_{n+1}-1}$ critical values, which are divided into $k_{n+1}$ groups, with each group containing $\frac{\mu}{k_{n+1}-1}$ critical values. Figure \ref{fig:base} gives an illustration of the base of the Lefschetz fibration $\pi$ in the case when $n=3$, and $k_1=k_2=k_3=k_4=3$. Take any one of the groups of $\frac{\mu}{k_{n+1}-1}$ critical values, and assume that they are contained in a half-disc $D_-\subset D^2$, with the other $k_{n+1}-1$ groups of critical values contained in the complement $D^2\setminus D_-$. The preimage $\pi^{-1}(D_-)\subset\overline{M}$, with corners rounded off, is Weinstein deformation equivalent to $D^{2n}$. Now the presentation given by $\pi$ describes $\overline{M}$ as the result of attaching $\mu$ critical handles to $\pi^{-1}(D_-)$ along some Legendrian link $\Lambda\subset\partial\pi^{-1}(D_-)$. For each of the $\mu$ critical values lying outside of $D_-$, there is a matching sphere $V_\bullet\subset M$ whose matching path connects the critical value to a critical value inside $D_-$. Denote by $\star\in D^2$ the origin of the base, the intersection $V_\bullet\cap\pi^{-1}(\star)$ is an unknotted Legendrian sphere $\Lambda_\bullet\subset\partial\pi^{-1}(D_-)$, which defines a connected component of $\Lambda$. In Figure \ref{fig:base}, such a sphere is depicted, and its intersection $\overline{V}_\bullet$ with $\pi^{-1}(D_-)$ is a Lagrangian disc in $D^{2n}$ which fills the unknot $\Lambda_\bullet$. The matching spheres $\{V_\bullet\}$ form a basis of vanishing cycles in $M$, and we will denote them by $V_1,\cdots,V_\mu$, and the corresponding Legendrian spheres by $\Lambda_1,\cdots,\Lambda_\mu$.

\begin{figure}
	\centering
	\begin{tikzpicture}
		\node at (-1.4,3) {$\times$};
		\node at (-1,3) {$\times$};
		\node at (-0.6,3) {$\times$};
		\node at (-0.2,3) {$\times$};
		\node at (0.2,3) {$\times$};
		\node at (0.6,3) {$\times$};
		\node at (1,3) {$\times$};
		\node at (1.4,3) {$\times$};
		
		\node at (-1.898,-2.711) {$\times$};
		\node at (-2.098,-2.365) {$\times$};
		\node at (-2.298,-2.019) {$\times$};
		\node at (-2.498,-1.673) {$\times$};
		\node at (-2.698,-1.327) {$\times$};
		\node at (-2.898,-0.981) {$\times$};
		\node at (-3.098,-0.635) {$\times$};
		\node at (-3.298,-0.289) {$\times$};
		
		\node at (1.898,-2.711) {$\times$};
		\node at (2.098,-2.365) {$\times$};
		\node at (2.298,-2.019) {$\times$};
		\node at (2.498,-1.673) {$\times$};
		\node at (2.698,-1.327) {$\times$};
		\node at (2.898,-0.981) {$\times$};
		\node at (3.098,-0.635) {$\times$};
		\node at (3.298,-0.289) {$\times$};
		
		\node at (0,0) {$\star$};
		\draw [orange,dashed] (-2.8,2.8) to [in=120,out=-60] (0,0);
		\draw [orange,dashed] (0,0) to [in=120,out=-60] (1.6,-3.6);
		\draw (0,0) circle [radius=4];
		\draw [blue] (1,3) to [in=0,out=-90] (0,0);
		\draw [blue] (0,0) to [in=0,out=180] (-3.098,-0.635);
		\node [blue] at (1.2,1) {$V_\bullet$};
		\node [blue] at (-1.5,-0.6) {$\overline{V}_\bullet$};
		\node at (-3,1) {$D_-$};
	\end{tikzpicture}
	\caption{Base of the Lefschetz fibration $\pi:M\rightarrow\mathbb{C}$ when $n=3$ and $k_1=k_2=k_3=k_4=3$.}
	\label{fig:base}
\end{figure}
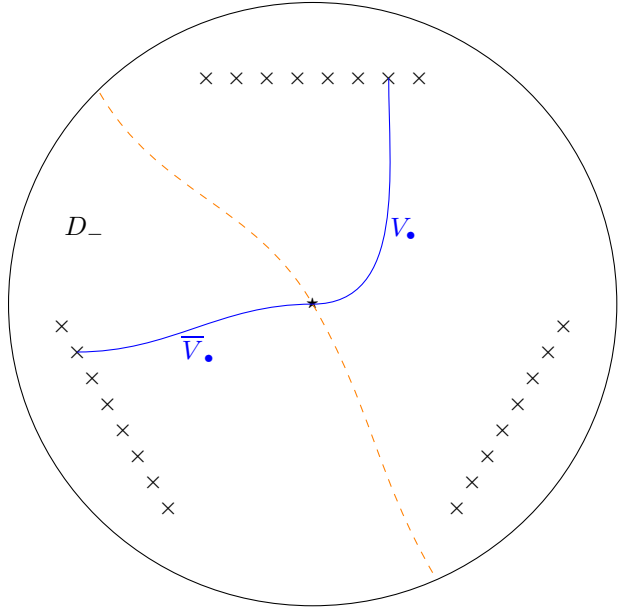

Denote by $\overline{L}_1,\cdots,\overline{L}_\mu\subset\overline{M}$ the Lagrangian cocore discs of the Weinstein $n$-handles whose attaching spheres are $\Lambda_1,\cdots,\Lambda_\mu\subset(S^{2n-1},\xi_\mathit{std})$ respectively, where $\xi_\mathit{std}$ denotes the standard contact structure on the sphere. They can be extended to non-compact Lagrangian submanifolds $L_1,\cdots,L_\mu\subset M$ by adding an infinite cone $[0,\infty)\times\partial\overline{L}_i$, where $1\leq i\leq\mu$, which makes them objects of the wrapped Fukaya category. In particular $V_i\cap L_j\neq\emptyset$ only when $i=j$, in which case $V_i$ intersects $L_i$ transversely at a unique point. Define the wrapped Fukaya $A_\infty$-algebra to be
\begin{equation}
\mathcal{W}_M:=\bigoplus_{1\leq i,j\leq\mu}\mathit{CW}^\ast(L_i,L_j),
\end{equation}
where $\mathit{CW}^\ast$ is the wrapped Floer cochain complex. It is a theorem due to Chantraine-Dimitroglou-Rizell-Ghiggini-Golovko $\cite{cdgg}$ and Ganatra-Pardon-Shende $\cite{gps2}$ that $L_1,\cdots,L_\mu$ generate the wrapped Fukaya category $\mathcal{W}(M)$, from which we get the quasi-equivalence
\begin{equation}\label{eq:W}
\mathcal{W}(M)^\mathit{perf}\cong\mathcal{W}_M^\mathit{perf}.
\end{equation}

Note that both of the $A_\infty$-algebras $\mathcal{F}_M$ and $\mathcal{W}_M$ are augmented. For $\mathcal{F}_M$, the augmentation $\mathcal{F}_M\rightarrow\Bbbk$ is the trivial projection to the idempotents in degree 0. According to Bourgeois-Ekhlom-Eliashberg $\cite{bee}$, Legendrian surgery gives a quasi-isomorphism
\begin{equation}\label{eq:BEE}
\mathcal{W}_M\cong\mathit{CE}^\ast(\Lambda),
\end{equation}
where $\mathit{CE}^\ast(\Lambda)$ is the Chekanov-Eliashberg dg algebra of the Legendrian link $\Lambda\subset(S^{2n-1},\xi_\mathit{std})$. Since $\Lambda$ has $\mu$ connected components, $\mathit{CE}^\ast(\Lambda)$ possesses the structure of a dg algebra over $\Bbbk$. We define the augmentation on $\mathcal{W}_M$ to be the composition
\begin{equation}
\mathcal{W}_M\xrightarrow{\cong}\mathit{CE}^\ast(\Lambda)\rightarrow\Bbbk,
\end{equation}
where the augmentation on $\mathit{CE}^\ast(\Lambda)$ is induced by the Lagrangian fillings $\bigsqcup_{1\leq i\leq\mu}\overline{V}_i$ of $\Lambda$. Our goal in this subsection is to establish the following Koszul duality result:

\begin{proposition}\label{proposition:Koszul}
There are quasi-isomorphisms between augmented $A_\infty$-algebra over $\Bbbk$:
\begin{equation}\label{eq:Koszul}
R\hom_{\mathcal{W}_M}(\Bbbk,\Bbbk)\cong\mathcal{F}_M,R\hom_{\mathcal{F}_M}(\Bbbk,\Bbbk)\cong\mathcal{W}_M.
\end{equation}
\end{proposition}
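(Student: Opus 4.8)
The plan is to deduce the Koszul duality from the two structural inputs we already have: the identification $\mathcal{W}_M\cong\mathit{CE}^\ast(\Lambda)$ of Bourgeois--Ekholm--Eliashberg in (\ref{eq:BEE}), and the fact that $V_1,\dots,V_\mu$ split-generate $\mathcal{F}(M)$ while $L_1,\dots,L_\mu$ generate $\mathcal{W}(M)$. The geometric point is that the wrapped objects $L_i$ are the Lagrangian cocore discs dual to the handles whose attaching spheres are $\Lambda_i$, and each $V_i$ is the unique compact vanishing cycle meeting $L_i$ transversely in one point (and disjoint from $L_j$ for $j\neq i$). So $\bigoplus_j\mathit{CW}^\ast(L_i,V_j)\cong\mathbb{K}e_i$ concentrated in degree $0$: the compact vanishing cycles, viewed as objects of $\mathcal{W}(M)$, realize the simple modules $\Bbbk$ over $\mathcal{W}_M$. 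Dually, $\bigoplus_i\mathit{CF}^\ast(V_j,L_i)$ picks out the augmentation. I would first make this precise: show that the $\mathcal{W}_M$-module $\bigoplus_j\mathit{CW}^\ast(L_j, V_i)$ is quasi-isomorphic to the simple module $\mathbb{K}e_i$, using either the BEE surgery formula (the cocore of a handle is Koszul dual to its attaching Legendrian's Chekanov--Eliashberg algebra, with the augmentation coming from a filling) or a direct wrapping/action argument that no higher-degree generators or differentials appear.

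Granting that, the first quasi-isomorphism $R\hom_{\mathcal{W}_M}(\Bbbk,\Bbbk)\cong\mathcal{F}_M$ becomes the statement that the Yoneda/endomorphism $A_\infty$-algebra of the simple modules $\{V_i\}$ inside $\mathrm{mod}\text{-}\mathcal{W}_M$ recovers the Fukaya algebra of vanishing cycles. Here I would invoke the closed-string-to-open-string comparison in the explicit Lefschetz model: both sides are computed by the same Floer complexes $\mathit{CF}^\ast(V_i,V_j)$ once one identifies $R\hom_{\mathcal{W}_M}(\mathbb{K}e_i,\mathbb{K}e_j)$ with morphisms between the images of $V_i,V_j$ under the Yoneda embedding $\mathcal{W}(M)\to\mathrm{mod}\text{-}\mathcal{W}(M)$, which is cohomologically full and faithful on the subcategory generated by the $V_i$. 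Concretely, represent $\mathbb{K}e_i$ by a twisted-complex resolution built from the $L_j$ (a Koszul-type resolution whose terms are governed by the intersection pattern $V_i\cap L_j$), and compute $\hom$ of these resolutions; the $A_\infty$-operations on the resolution-hom complex match the $\mu_d$ on $\mathcal{F}_M$ by a direct comparison of holomorphic polygons, or more cleanly by citing the general Koszul-duality theorem for Weinstein manifolds (Ekholm--Lekili, or the bimodule formulation) applied to this handle decomposition. The augmentation on $\mathcal{W}_M$ was defined precisely so as to be the one induced by the fillings $\overline{V}_i$, which is exactly the data needed to make this a statement about \emph{augmented} $A_\infty$-algebras.

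For the second quasi-isomorphism $R\hom_{\mathcal{F}_M}(\Bbbk,\Bbbk)\cong\mathcal{W}_M$, I would not redo the work but instead use formal Koszul-duality reciprocity: if $A$ is an augmented $A_\infty$-algebra over a semisimple ring $\Bbbk$ that is proper (finite-dimensional cohomology, which $\mathcal{F}_M$ is), then the double Koszul dual $R\hom_{R\hom_A(\Bbbk,\Bbbk)}(\Bbbk,\Bbbk)$ recovers $A$ up to quasi-isomorphism provided a finiteness/generation hypothesis holds — here, that $\Bbbk$ generates $\mathrm{perf}(\mathcal{F}_M)$, equivalently that $\mathcal{F}_M$ is \emph{smooth}, or that the Koszul dual $\mathcal{W}_M$ is proper after completion. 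Since $\mathcal{W}_M\cong\mathit{CE}^\ast(\Lambda)$ and $\mathcal{F}_M$ is the homology of a closed Weinstein manifold's Fukaya algebra, the relevant finiteness is available (this is the Weinstein-manifold incarnation of Koszul duality between wrapped and compact Fukaya categories; see the cited surgery literature). So the second equivalence follows from the first by applying $R\hom_{(-)}(\Bbbk,\Bbbk)$ and the reciprocity statement.

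The main obstacle I anticipate is the first precise step: rigorously identifying the $\mathcal{W}_M$-module generated by a vanishing cycle $V_i$ with the simple module $\mathbb{K}e_i$, \emph{with the correct $A_\infty$-structure and the correct augmentation}, rather than merely with the right cohomology. This requires either a careful action/wrapping argument showing that the $\mathit{CW}^\ast(L_j,V_i)$-complexes have no hidden contributions, or a clean appeal to the BEE/Ekholm--Lekili surgery-exact-triangle machinery that the cocore--core pairing is Koszul dual over $\Bbbk$; the bookkeeping of gradings (using the trivialization of $K_M$ to keep everything $\mathbb{Z}$-graded, cf. (\ref{eq:Dehn})) and of signs/$\mathrm{Spin}$-structures is where the real care is needed. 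Once that identification is in hand, the rest is formal homological algebra over $\Bbbk$.
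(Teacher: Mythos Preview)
Your treatment of the first quasi-isomorphism is essentially what the paper does: it cites Ekholm--Lekili's generalized Eilenberg--Moore equivalence $R\hom_{\mathit{CE}^\ast(\Lambda)}(\Bbbk,\Bbbk)\cong\mathcal{F}_M$ and combines it with the surgery isomorphism (\ref{eq:BEE}). Your more geometric description (identifying $V_i$ with the simple $\mathcal{W}_M$-module $\mathbb{K}e_i$ via the single transverse intersection $V_i\cap L_i$) is the correct intuition behind that citation.

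The genuine gap is in the second quasi-isomorphism. Formal Koszul reciprocity does \emph{not} give you $\mathcal{W}_M$ back: applying $R\hom_{(-)}(\Bbbk,\Bbbk)$ to the first equivalence yields
\[
R\hom_{\mathcal{F}_M}(\Bbbk,\Bbbk)\cong (\mathrm{B}\mathcal{F}_M)^\# =: \widehat{\mathit{CE}}^\ast(\Lambda),
\]
the \emph{completed} Chekanov--Eliashberg algebra, and the remaining content is precisely the statement $\mathit{CE}^\ast(\Lambda)\cong\widehat{\mathit{CE}}^\ast(\Lambda)$, i.e.\ that $\mathcal{W}_M$ is already complete. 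This is not formal, and it is exactly here that the paper invokes the weight hypothesis (\ref{eq:weight}): via (\ref{eq:Dehn}) and the Lekili--Ueda result that the diagonal $\mathcal{W}(M)$-bimodule is a colimit built from $\mathcal{F}(M)\otimes\mathcal{F}(M)^\mathit{op}$, one gets a fully faithful embedding $\mathcal{W}(M)^\mathit{perf}\hookrightarrow\mathcal{F}(M)^\mathit{prop}$, promotes it to an equivalence $\mathcal{W}_M^\mathit{perf}\cong\mathcal{F}_M^\mathit{prop}$ by a compact-generation argument, and only then concludes completeness. Your proposed hypotheses (``$\Bbbk$ generates $\mathrm{perf}(\mathcal{F}_M)$'', ``$\mathcal{F}_M$ is smooth'', ``$\mathcal{W}_M$ is proper after completion'') are not the right ones and are not known to hold here without further input; in particular $\mathcal{F}_M$ is proper but has no reason to be smooth, and $\mathcal{W}_M$ is smooth but not proper.

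The point is not a technicality: when $\sum_i 1/k_i = 1$ (the log Calabi--Yau case excluded by (\ref{eq:weight})) the duality $R\hom_{\mathcal{F}_M}(\Bbbk,\Bbbk)\cong\mathcal{W}_M$ can genuinely fail, so any argument that does not use (\ref{eq:weight}) cannot be correct. You need to supply the completeness of $\mathcal{W}_M$, and the paper's route through (\ref{eq:Dehn}) and the diagonal-bimodule argument is how that is done.
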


\begin{proof}
The argument is an adaptation of that appeared in $\cite{yl2}$, Section 6.1 to the more general case, which, for the sake of self-containedness we will reproduce here.

A generalized version of the Eilenberg-Moore equivalence, established in $\cite{ekl}$, Theorem 4, states that we have the following quasi-isomorphism
\begin{equation}
R\hom_{\mathit{CE}^\ast(\Lambda)}(\Bbbk,\Bbbk)\cong\mathcal{F}_M.
\end{equation}
Combining with (\ref{eq:BEE}), we have proved the first quasi-isomorphism in (\ref{eq:Koszul}).

For the second quasi-isomorphism, we use the assumption (\ref{eq:weight}). By $\cite{lu1}$, Proposition 6.5 and Lemma 6.6, (\ref{eq:Dehn}) implies that the diagonal bimodule over $\mathcal{W}(M)$ is a colimit of $\mathcal{F}(M)\otimes\mathcal{F}(M)^\mathit{op}$ in the category of $\mathcal{W}(M)$-bimodules, where $\mathcal{F}(M)^\mathit{op}$ is the opposite category of $\mathcal{F}(M)$. Beilinson's diagonal argument $\cite{abe}$ then implies that there is a fully faithful embedding
\begin{equation}\label{eq:emb1}
\mathcal{W}(M)\hookrightarrow\mathcal{F}(M)^\mathit{mod}
\end{equation}
of the wrapped Fukaya category into the $A_\infty$-category of right $A_\infty$-modules over $\mathcal{F}(M)$. Observe that for any closed Lagrangian submanifold $K\subset M$ which defines an object of $\mathcal{W}(M)$ (e.g. when $K$ is tautologically unobstructed or monotone), the wrapped Floer cohomology $\mathit{HW}^\ast(L,K)$ is finite-dimensional for any object $L$ of $\mathcal{W}(M)$. This shows that the fully faithful embedding (\ref{eq:emb1}) actually gives rise to a cohomologically full and faithful $A_\infty$-functor
\begin{equation}
\mathcal{I}:\mathcal{W}(M)^\mathit{perf}\hookrightarrow\mathcal{F}(M)^\mathit{prop},
\end{equation}
where $\mathcal{F}(M)^\mathit{prop}$ is the $A_\infty$-category of proper $A_\infty$-modules over $\mathcal{F}(M)$. Using the quasi-equivalences (\ref{eq:F}) and (\ref{eq:W}), and passing to the derived functor, we get a fully faithful functor
\begin{equation}\label{eq:emb2}
D\mathcal{I}:D^\mathit{perf}(\mathcal{W}_M)\hookrightarrow D^\mathit{prop}(\mathcal{F}_M).
\end{equation}
By $\cite{sp}$, Proposition 13.34.6, a compact object $X$ is a weak generator of a compactly generated triangulated category $\mathcal{D}$ with arbitrary direct sums if and only if $X$ split-generates the triangulated subcategory of compact objects $\mathcal{D}_c$ of $\mathcal{D}$. In order to apply this fact to the context here, notice that $D^\mathit{perf}(\mathcal{F}_M)$ is a full triangulated subcategory of $D^\mathit{prop}(\mathcal{F}_M)$, and it is actually the subcategory of compact objects of $D^\mathit{prop}(\mathcal{F}_M)$.

Consider the basis of vanishing cycles $V_1,\cdots,V_\mu\subset M$, now regarded as objects of $\mathcal{W}(M)$. It follows from the split-generation of $\mathcal{F}(M)$ by $V_1,\cdots,V_\mu$ that under the functor $D\mathcal{I}$, their Yoneda modules define compact weak generators of the triangulated category $D^\mathit{prop}(\mathcal{F}_M)$. It follows from $\cite{vl}$, Theorem 2.2 that $D^\mathit{prop}(\mathcal{F}_M)$ itself is in fact the smallest triangulated subcategory of $D^\mathit{prop}(\mathcal{F}_M)$ containing the Yoneda modules of $V_1,\cdots,V_\mu$ and closed under direct sums. Since $D^\mathit{perf}(\mathcal{W}_M)$ is obviously such a subcategory of $D^\mathit{prop}(\mathcal{F}_M)$ by (\ref{eq:emb2}), we obtain a fully faithful embedding $D^\mathit{prop}(\mathcal{F}_M)\hookrightarrow D^\mathit{perf}(\mathcal{W}_M)$ in the reverse direction. In this way, we have proved the quasi-equivalence
\begin{equation}
\mathcal{W}_M^\mathit{perf}\cong\mathcal{F}_M^\mathit{prop}.
\end{equation}
Recall from $\cite{ekl}$ that the \textit{completed} Chekanov-Eliashberg dg algebra of the Legendrian link $\Lambda$ is by definition
\begin{equation}\label{eq:w-Koszul}
\widehat{\mathit{CE}}^\ast(\Lambda):=(\mathrm{B}\mathcal{F}_M)^\#,
\end{equation}
where the right-hand side is the linear dual of the bar construction of the augmented $A_\infty$-algebra $\mathcal{F}_M$. Combining (\ref{eq:w-Koszul}) with the Koszul duality functor $R\hom_{\mathcal{F}_M}(\cdot,\Bbbk)$, we get a quasi-equivalence
\begin{equation}
\mathcal{W}_M^\mathit{perf}\cong\widehat{\mathit{CE}}^\ast(\Lambda)^\mathit{perf}.
\end{equation}
In view of the surgery quasi-isomorphism (\ref{eq:BEE}), it follows that
\begin{equation}
\mathit{CE}^\ast(\Lambda)\cong\widehat{\mathit{CE}}^\ast(\Lambda),
\end{equation}
which shows that $\mathit{CE}^\ast(\Lambda)$ is complete as a dg algebra, and therefore (\ref{eq:w-Koszul}) gives the second quasi-equivalence in (\ref{eq:Koszul}).
\end{proof}

\begin{remark}
The Koszul duality between compact and wrapped Fukaya $A_\infty$-algebras of Weinstein manifolds have been studied in many situations since the work of Etg\"{u}-Lekili $\cite{etl1}$. In the case when $M$ is a smooth affine variety which contains a set of compact Lagrangian submanifolds intersecting the cocores in a nice way (e.g. Milnor fibers), it is expected to be true when $M$ is log general type or has log Kodaira dimension $-\infty$. For the log general type case, see $\cite{lp1,lu1}$, for the case when the log Kodaira dimension of $M$ is $-\infty$, see $\cite{ekl,etl1,yl1}$.
\end{remark}

\subsection{Fukaya categories}

From now on we will work in the more restrictive case of affine conic bundles. Let $M$ be the Milnor fiber of the isolated hypersurface singularity defined by
\begin{equation}
w(z_1,\cdots,z_{n+1})=0,
\end{equation}
where $w$ is as in (\ref{eq:conic}). We compute in this subsection the compact and the wrapped Fukaya categories of $M$.

For the compact Fukaya category $\mathcal{F}(M)$, we notice that $M$ is by definition the smooth fiber of a Lefschetz fibration $\tilde{w}:\mathbb{C}^{n+1}\rightarrow\mathbb{C}$, which is the double suspension of the Lefschetz fibration $\tilde{p}:\mathbb{C}^{n-1}\rightarrow\mathbb{C}$ obtained by Morsifying the polynomial $p(z_3,\cdots,z_{n+1})$ as in (\ref{eq:Brieskorn}). Denote by $\mathcal{F}(\tilde{p})$ the Fukaya category associated to the Lefschetz fibration $\tilde{p}$, whose construction is due to Seidel $\cite{ps1}$. Notice that we have the quasi-equivalence $\mathcal{F}(\tilde{p})\cong\mathcal{F}(\tilde{w})$. It follows from $\cite{ps6}$, Corollary 3.1 that there is a quasi-equivalence
\begin{equation}\label{eq:suspension}
\mathcal{V}(M)\cong\mathcal{F}(\tilde{p})\oplus\mathcal{F}(\tilde{p})^\vee[-n],
\end{equation}
where $\mathcal{V}(M)\subset\mathcal{F}(M)$ is the full $A_\infty$-subcategory of the Fukaya category which is formed by a basis of vanishing cycles, and $\mathcal{F}(\tilde{p})^\vee$ is the dual of its diagonal bimodule. Passing to the endomorphism algebras on both sides, we obtain a quasi-isomorphism
\begin{equation}\label{eq:trivial-ext}
\mathcal{F}_M\cong\mathcal{F}_{\tilde{p}}\oplus\mathcal{F}_{\tilde{p}}^\vee[-n]
\end{equation}
between the compact Fukaya $A_\infty$-algebra and the trivial extension (also known as \textit{cyclic completion}, in the terminology of Segal $\cite{es}$) of the endomorphism algebra of Lefschetz thimbles of $\tilde{p}$ by the dual of its diagonal bimodule, where $\mathcal{F}_{\tilde{p}}$ is the $A_\infty$-algebra over $\Bbbk$ defined by
\begin{equation}
\mathcal{F}_{\tilde{p}}:=\bigoplus_{1\leq i,j\leq\mu}\mathit{CF}^\ast(\Delta_i,\Delta_j),
\end{equation}
where $\Delta_1,\cdots,\Delta_\mu\subset\mathbb{C}^{n-1}$ is a basis of Lefschetz thimbles. The labellings are arranged so that when we consider the lifts $\widetilde{\Delta}_1,\cdots,\widetilde{\Delta}_\mu\subset\mathbb{C}^{n+1}$ of these thimbles as the Lefschetz thimbles of $\tilde{w}$, their restrictions to a fiber at infinity are Hamiltonian isotopic to the vanishing cycles $V_1,\cdots,V_\mu$, whose endomorphism algebra defines the Fukaya $A_\infty$-algebra $\mathcal{F}_M$. The Floer complex $\mathit{CF}^\ast(\Delta_i,\Delta_j)$ can be defined using a Hamiltonian perturbation which is small at infinity $\cite{ps11}$, or one can regard it as a partially wrapped Floer cochain complex $\cite{zs}$.

In our case, the basis of Lefschetz thimbles can be chosen so that the directed $A_\infty$-algebra $\mathcal{F}_{\tilde{p}}$ admits a very simple description. Denote by $\mathcal{A}_m$ the path algebra of the $A_m$ quiver, see Figure \ref{fig:Am}, with the arrows located in degree 1, the quasi-isomorphism
\begin{equation}\label{eq:tensor}
\mathcal{F}_{\tilde{p}}\cong\mathcal{A}_{k_3-1}\otimes_\Bbbk\cdots\otimes_\Bbbk\mathcal{A}_{k_{n+1}-1}
\end{equation}
between (formal) $A_\infty$-algebras over $\Bbbk$ is proved in $\cite{fu}$, Theorem 1.1.

\begin{figure}
	\centering
	\begin{tikzpicture}
		\node[circle,draw, fill, minimum size = 2pt,inner sep=1pt] at (0,0) {};
		\node[circle,draw, fill, minimum size = 2pt,inner sep=1pt] at (8,0) {};
		\node[circle,draw, fill, minimum size = 2pt,inner sep=1pt] at (1.5,0) {};
		\node[circle,draw, fill, minimum size = 2pt,inner sep=1pt] at (3,0) {};
		
		\draw[->,shorten >=8pt, shorten <=8pt] (0,0) to (1.5,0);
		\draw[->,shorten >=8pt, shorten <=8pt] (1.5,0) to (3,0);
		
		\path (3,0) to node {\dots} (8,0);
		\node [shape=circle,minimum size=2pt, inner sep=1pt] at (4.5,0) {};
		\draw[->,shorten >=8pt, shorten <=8pt] (3,0) to (4.5,0);
		
		\node [shape=circle,minimum size=2pt, inner sep=1pt] at (6.5,0) {};
		\draw[->,shorten >=8pt, shorten <=8pt] (6.5,0) to (8,0);
		
		\node at (0,0.25) {$1$};
		\node at (1.5,0.25) {$2$};
		\node at (3,0.25) {$3$};
		\node at (8,0.25) {$m$};
	\end{tikzpicture}
	\caption{The $A_m$ quiver}
	\label{fig:Am}
\end{figure}
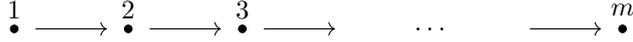
\bigskip

To compute the wrapped Fukaya category $\mathcal{W}(M)$, we use Proposition \ref{proposition:Koszul}, which says that up to quasi-isomorphism, the wrapped Fukaya $A_\infty$-algebra can be computed in terms of the compact Fukaya $A_\infty$-algebra:
\begin{equation}\label{eq:bar}
\mathcal{W}_M\cong(\mathrm{B}\mathcal{F}_M)^\#.
\end{equation}

Let $\mathcal{A}$ be a dg algebra over $\Bbbk$. Recall that the $n$-\textit{Calabi-Yau completion} of $\mathcal{A}$, whose construction is due to Keller $\cite{bk,bke}$, is the tensor dg algebra
\begin{equation}
\Pi_n(\mathcal{A}):=T_\mathcal{A}\left(\mathcal{A}^\dag[n-1]\right),
\end{equation}
where $\mathcal{A}^\dag:=R\hom_{\mathcal{A}^e}(\mathcal{A},\mathcal{A}^e)$ is the derived dual of the semi-free resolution of the diagonal bimodule $\mathcal{A}$ (which we still denote by $\mathcal{A}$ by abuse of notation), and $\mathcal{A}^e:=\mathcal{A}\otimes\mathcal{A}^\mathit{op}$. It is proved in $\cite{bk}$, Theorem 4.8 (see also $\cite{bke}$, Theorem 1.1) that if $\mathcal{A}$ is homologically smooth, then $\Pi_n(\mathcal{A})$ is an \textit{exact} $n$-Calabi-Yau algebra. Here, the exactness of a Calabi-Yau structure means that the class in $\mathit{HH}_{-n}\left(\Pi_n(\mathcal{A})\right)$\footnote{We use the cohomological grading on the Hoshschild chain complex.} which defines the (weak) smooth Calabi-Yau structure admits a lift to the (positive) cyclic homology group $\mathit{HC}_{-n+1}\left(\Pi_n(\mathcal{A})\right)$.

Moreover, we have the following:

\begin{proposition}[$\cite{hlw}$, Theorem 6]\label{proposition:trivial-ext}
Let $\mathcal{A}$ be a proper dg algebra over $\Bbbk$ which is complete, then the Koszul dual of its cyclic completion $\mathcal{A}\oplus\mathcal{A}^\vee[-n]$ is quasi-isomorphic to the $n$-Calabi-Yau completion $\Pi_n(\mathcal{A}^!)$, where $\mathcal{A}^!$ is the Koszul dual of $\mathcal{A}$, which, by our assumptions, must be homologically smooth.
\end{proposition}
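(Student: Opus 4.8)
The plan is to reduce Proposition \ref{proposition:trivial-ext} to a statement purely about the two Koszul-duality functors and the bar construction, and then to invoke the compatibility of Calabi-Yau completions with Koszul duality. First I would set up notation: write $\mathcal{B}:=\mathcal{A}^!=R\hom_\mathcal{A}(\Bbbk,\Bbbk)$ for the Koszul dual, which is homologically smooth because $\mathcal{A}$ is proper (and, being complete, Koszul duality is an honest involution, so $\mathcal{A}\cong R\hom_\mathcal{B}(\Bbbk,\Bbbk)$ as well). The key observation is that the trivial extension $\mathcal{A}\oplus\mathcal{A}^\vee[-n]$ is precisely the \emph{$n$-Calabi-Yau cocompletion} of $\mathcal{A}$ in the proper/co-smooth world — dually to the Calabi-Yau completion $\Pi_n$ of a smooth algebra — and that Koszul duality exchanges these two constructions. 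Concretely, I would show that for a proper $A_\infty$-algebra $\mathcal{A}$, the cyclic completion is obtained from $\mathcal{A}$ by adjoining the shifted bimodule dual $\mathcal{A}^\vee$, and that under the contravariant equivalence $D^\mathit{perf}(\mathcal{B})\cong D^\mathit{prop}(\mathcal{A})$ (and on the bimodule level $D^\mathit{perf}(\mathcal{B}^e)\cong D^\mathit{prop}(\mathcal{A}^e)$) the diagonal bimodule $\mathcal{A}$ corresponds to the diagonal bimodule $\mathcal{B}$, the inverse dualizing bimodule $\mathcal{B}^\dag=R\hom_{\mathcal{B}^e}(\mathcal{B},\mathcal{B}^e)$ corresponds to $\mathcal{A}^\vee$, and, crucially, the free tensor algebra construction $T_\mathcal{B}(-)$ is intertwined with the square-zero extension $\mathcal{A}\oplus(-)$.

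The main steps, in order, are as follows. (1) Recall the explicit bar-dual description $\mathcal{B}=(\mathrm{B}\mathcal{A})^\#$ and record that completeness of $\mathcal{A}$ guarantees $(\mathrm{B}\mathcal{B})^\#\cong\mathcal{A}$, so both Koszul-duality maps $R\hom_\mathcal{A}(\Bbbk,\Bbbk)$ and $R\hom_\mathcal{B}(\Bbbk,\Bbbk)$ are quasi-inverse equivalences on the relevant module and bimodule categories; this is where I would cite the same inputs used in the proof of Proposition \ref{proposition:Koszul} (the generalized Eilenberg-Moore equivalence of \cite{ekl}, and the fact that properness of $\mathcal{A}$ implies smoothness of $\mathcal{B}$). (2) Identify, under the induced contravariant bimodule equivalence, the image of the inverse dualizing bimodule: one has $R\hom_{\mathcal{B}^e}(\mathcal{B},\mathcal{B}^e)[n-1]\leftrightarrow\mathcal{A}^\vee[-n]\,(=\mathcal{A}^![n-1]$-type shift matched to the normalization in $\Pi_n$); this shift bookkeeping is the place to be careful, and I would pin it down by testing on $\Bbbk$, using that $R\hom_{\mathcal{B}^e}(\mathcal{B},\mathcal{B}^e)\otim_\mathcal{B}^{\mathbb{L}}\Bbbk\cong(\Bbbk\text{-dual of }\mathcal{A})$ up to the expected shift. (3) Promote the bimodule identification to an algebra identification: the tensor algebra $T_\mathcal{B}(\mathcal{B}^\dag[n-1])=\Pi_n(\mathcal{B})$ is the universal way to adjoin that bimodule as a "derivation of square zero", and Koszul duality sends an iterated extension by a bimodule $N$ to the trivial (square-zero) extension by the dual bimodule $N^\vee$ — i.e. $R\hom_{\Pi_n(\mathcal{B})}(\Bbbk,\Bbbk)\cong\mathcal{A}\oplus\mathcal{A}^\vee[-n]$. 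Here I would lean directly on \cite{hlw}, Theorem 6, whose proof carries out exactly this computation; the role of our proposition is to package its hypotheses (properness and completeness of $\mathcal{A}$, hence smoothness of $\mathcal{B}=\mathcal{A}^!$) in the geometric setting. (4) Conclude that the Koszul dual of $\mathcal{A}\oplus\mathcal{A}^\vee[-n]$ is quasi-isomorphic to $\Pi_n(\mathcal{A}^!)$.

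I expect the genuine obstacle to be step (2)–(3): matching the grading/shift conventions between the trivial-extension side (where $\mathcal{A}^\vee[-n]$ is the linear dual of the diagonal bimodule, placed so that $\mathcal{A}\oplus\mathcal{A}^\vee[-n]$ is a smooth-CY-\emph{co}complete, proper, $n$-CY-\emph{co}algebra) and the tensor-algebra side (where $\Pi_n(\mathcal{B})=T_\mathcal{B}(\mathcal{B}^\dag[n-1])$ with $\mathcal{B}^\dag=R\hom_{\mathcal{B}^e}(\mathcal{B},\mathcal{B}^e)$), and in particular verifying that the \emph{completeness} hypothesis on $\mathcal{A}$ is precisely what is needed for the tensor algebra (an infinite direct sum of bimodule powers) to be the honest Koszul dual rather than a completed version — mirroring the role completeness of $\mathit{CE}^\ast(\Lambda)$ played at the end of the proof of Proposition \ref{proposition:Koszul}. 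Everything else is a formal consequence of bar-cobar duality and the universal property of Calabi-Yau completions, and since \cite{hlw}, Theorem 6 already records the computation, the write-up is mostly a matter of checking that our $\mathcal{A}$ (the completed endomorphism algebra of Lefschetz thimbles) satisfies the stated hypotheses, which follows from properness of $\mathcal{F}_M$ and the completeness statement established in Proposition \ref{proposition:Koszul}.
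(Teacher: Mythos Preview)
The paper does not give its own proof of this proposition: it is stated with the attribution ``\cite{hlw}, Theorem 6'' and used as a black box, with no argument supplied. Your proposal therefore goes well beyond what the paper does --- you sketch the mechanism (Koszul duality exchanging trivial extension with Calabi--Yau completion, the bimodule correspondence $\mathcal{A}^\vee\leftrightarrow\mathcal{B}^\dag$, the role of completeness in avoiding a completed tensor algebra) --- whereas the paper simply imports the result from \cite{hlw}. Since at your step (3) you explicitly ``lean directly on \cite{hlw}, Theorem 6'', your write-up is consistent with the paper's treatment; the surrounding discussion you give is expository scaffolding rather than an independent proof, and that is appropriate here.

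One small point: your proposal contains the undefined control sequence \verb|\otim| (presumably \verb|\otimes|), which would break compilation; and the discussion of verifying hypotheses ``for our $\mathcal{A}$'' at the end belongs to the proof of Proposition~\ref{proposition:CY-completion} rather than to this proposition, which is a purely algebraic statement about an arbitrary proper complete dg algebra.
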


We now prove the main result of this subsection:

\begin{proposition}\label{proposition:CY-completion}
We have the following quasi-equivalence between triangulated categories:
\begin{equation}\label{eq:CY-completion}
D^\mathit{perf}\left(\mathcal{W}_M\right)\cong D^\mathit{perf}\left(\Pi_n(\mathcal{F}_{\tilde{p}})\right).
\end{equation}
\end{proposition}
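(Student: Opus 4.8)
The plan is to chain together Koszul duality (Proposition~\ref{proposition:Koszul}), the trivial-extension description (\ref{eq:trivial-ext}), and the algebraic input of Proposition~\ref{proposition:trivial-ext}. By Proposition~\ref{proposition:Koszul} together with the quasi-equivalence (\ref{eq:W}), the left-hand side of (\ref{eq:CY-completion}) is $D^\mathit{perf}\big(R\hom_{\mathcal{F}_M}(\Bbbk,\Bbbk)\big)$, so I would first use (\ref{eq:trivial-ext}) to rewrite $\mathcal{F}_M$ as the cyclic completion $\mathcal{F}_{\tilde p}\oplus\mathcal{F}_{\tilde p}^\vee[-n]$, and then apply Proposition~\ref{proposition:trivial-ext} with $\mathcal{A}=\mathcal{F}_{\tilde p}$. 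To do so one must verify the hypotheses of that proposition: by (\ref{eq:tensor}), $\mathcal{F}_{\tilde p}\cong\mathcal{A}_{k_3-1}\otimes_\Bbbk\cdots\otimes_\Bbbk\mathcal{A}_{k_{n+1}-1}$ is a finite-dimensional, formal $A_\infty$-algebra over $\Bbbk$ whose augmentation ideal is the tensor product of the (nilpotent) radicals of the directed quiver algebras $\mathcal{A}_{k_i-1}$; hence $\mathcal{F}_{\tilde p}$ is proper and complete, and since each factor is hereditary it has finite global dimension, so its Koszul dual $\mathcal{F}_{\tilde p}^!$ is homologically smooth. Proposition~\ref{proposition:trivial-ext} then produces a quasi-isomorphism $R\hom_{\mathcal{F}_M}(\Bbbk,\Bbbk)\cong\Pi_n(\mathcal{F}_{\tilde p}^!)$, and therefore $D^\mathit{perf}(\mathcal{W}_M)\cong D^\mathit{perf}\big(\Pi_n(\mathcal{F}_{\tilde p}^!)\big)$.

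The remaining --- and essential --- step is to identify $D^\mathit{perf}\big(\Pi_n(\mathcal{F}_{\tilde p}^!)\big)$ with $D^\mathit{perf}\big(\Pi_n(\mathcal{F}_{\tilde p})\big)$. Here I would invoke that Keller's $n$-Calabi--Yau completion is invariant under derived Morita equivalence of homologically smooth dg algebras and compatible with $\otimes_\Bbbk$, and that Koszul duality commutes with tensor products over the semisimple ring $\Bbbk$, so that $\mathcal{F}_{\tilde p}^!\cong\mathcal{A}_{k_3-1}^!\otimes_\Bbbk\cdots\otimes_\Bbbk\mathcal{A}_{k_{n+1}-1}^!$. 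It then suffices to exhibit, for each $m$, a derived equivalence $\mathcal{A}_m\simeq\mathcal{A}_m^!$ and to tensor them together. A direct computation of the bar construction of $\mathcal{A}_m$ identifies $\mathcal{A}_m^!$ with an explicit radical-square-zero algebra of $A_m$-type; on the other hand, regrading the indecomposable projectives of $\mathcal{A}_m$ by $P_i\mapsto P_i[i-1]$ identifies $D^\mathit{perf}(\mathcal{A}_m)$ with the ordinary bounded derived category $D^b(\Bbbk A_m)$. Since the radical-square-zero algebra in question is gentle with Gabriel quiver a tree of Dynkin type $A_m$, it is iterated tilted of type $A_m$ and hence derived equivalent to $\Bbbk A_m$; composing these equivalences gives $\mathcal{A}_m\simeq\mathcal{A}_m^!$ in the derived sense, which finishes the proof.

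The hard part will be the last step, and specifically the bookkeeping of the $\mathbb{Z}$-grading: $\mathcal{A}_m$ and $\mathcal{A}_m^!$ are genuinely non-isomorphic $A_\infty$-algebras (already of different total dimension once $m\geq3$), so the equivalence $\mathcal{A}_m\simeq\mathcal{A}_m^!$ has to be realized concretely, as the composition of the degree shifts above with an explicit tilting complex of Dynkin type $A$, and one must check that the resulting grading conventions are consistent with those imposed on $\Pi_n(\mathcal{F}_{\tilde p})$ by the shift $[n-1]$ in the Calabi--Yau completion. It is also worth confirming that Keller's invariance and compatibility statements carry over verbatim to the $\mathbb{Z}$-graded $A_\infty$-setting used here; alternatively one can sidestep the choice of a particular algebra model and argue directly that the cyclic completions of $\mathcal{F}_{\tilde p}$ and $\mathcal{F}_{\tilde p}^!$ are quasi-isomorphic, which by Proposition~\ref{proposition:trivial-ext} is equivalent to the identification $\Pi_n(\mathcal{F}_{\tilde p})\simeq\Pi_n(\mathcal{F}_{\tilde p}^!)$ that is being sought.
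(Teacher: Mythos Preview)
Your proposal follows the paper's strategy exactly through the identification $\mathcal{W}_M\cong\Pi_n(\mathcal{F}_{\tilde{p}}^!)$: both arguments combine (\ref{eq:trivial-ext}) with (\ref{eq:bar}) (equivalently Proposition~\ref{proposition:Koszul}), verify the completeness hypothesis of Proposition~\ref{proposition:trivial-ext} via the tensor description (\ref{eq:tensor}), and then invoke that proposition.

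Where you diverge is only in how you justify the final step $\mathcal{F}_{\tilde{p}}\simeq\mathcal{F}_{\tilde{p}}^!$. The paper simply asserts that $\mathcal{F}_{\tilde{p}}$ is self-Koszul-dual in the derived sense ``as a consequence of (\ref{eq:tensor})'', and offers as an alternative a clean geometric argument: $\mathcal{F}_{\tilde{p}}^!$ is identified with the endomorphism algebra of the \emph{dual} basis of Lefschetz thimbles $\Delta_1^!,\ldots,\Delta_\mu^!$ (those with $\Delta_i\cap\Delta_j^!=\emptyset$ for $i\neq j$ and $\Delta_i\pitchfork\Delta_i^!$ a single point), referring to \cite{fss}, Section~6e. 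Since both the original and the dual thimbles are full exceptional collections generating $\mathcal{F}(\tilde{p})$, the derived equivalence is immediate and comes with the correct $\mathbb{Z}$-grading built in. Your route---reducing to each tensor factor and exhibiting $\mathcal{A}_m\simeq\mathcal{A}_m^!$ via tilting of Dynkin type~$A$---is a legitimate way to unpack the first of the paper's two justifications, but the geometric alternative completely sidesteps the grading bookkeeping you flag as the hard part, and avoids having to check separately that Keller's Calabi--Yau completion and Koszul duality interact well with $\otimes_\Bbbk$ in the graded $A_\infty$-setting.
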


\begin{proof}
By (\ref{eq:trivial-ext}) and (\ref{eq:bar}), we see that $\mathcal{W}_M$ is quasi-isomorphic to the Koszul dual of the cyclic completion of $\mathcal{F}_{\tilde{p}}$. In order to apply Proposition \ref{proposition:trivial-ext}, we only need to verify that $\mathcal{F}_{\tilde{p}}$ is a complete dg algebra, since its properness follows from the definition. To do this, we appeal to (\ref{eq:tensor}). It is clear that the tensor algebra $\mathcal{A}_{k_3-1}\otimes_\Bbbk\cdots\otimes_\Bbbk\mathcal{A}_{k_{n+1}-1}$ with vanishing differential is complete since the path length filtrations on each $\mathcal{A}_{k_i-1}$, where $3\leq i\leq n+1$, induce a filtration on $\mathcal{A}_{k_3-1}\otimes_\Bbbk\cdots\otimes_\Bbbk\mathcal{A}_{k_{n+1}-1}$ which is Hausdorff and complete. Proposition \ref{proposition:trivial-ext} then implies the quasi-isomorphism
\begin{equation}
\mathcal{W}_M\cong\Pi_n(\mathcal{F}_{\tilde{p}}^!).
\end{equation}
The proposition now follows from the fact that the (formal) $A_\infty$-algebra $\mathcal{F}_{\tilde{p}}$ is self-Koszul dual (in the derived sense), which is a consequence of (\ref{eq:tensor}). Alternatively, this can be seen by identifying $\mathcal{F}_{\tilde{p}}^!$ with the endomorphism algebra of a basis of dual Lefschetz thimbles $\Delta_1^!,\cdots,\Delta_\mu^!$, where $\Delta_i\cap\Delta_j^!=\emptyset$ unless $i=j$, in which case $\Delta_i$ intersects $\Delta_i^!$ transversely at a unique point. See $\cite{fss}$, Section 6e for details.
\end{proof}

\begin{remark}
Proposition \ref{proposition:CY-completion} partially proves a conjecture of Lekili-Ueda. In $\cite{lu2}$, Remark 3.1, it is conjectured that (\ref{eq:CY-completion}) should hold in the more general case when $p$ is an arbitrary invertible weighted homogeneous polynomial. They stated the conjecture after verifying that (\ref{eq:CY-completion}) holds when $\{p=0\}\subset\mathbb{C}^{n-1}$ is a simple singularity. Similar results were also obtained previously for Milnor fibers $M$ which are not necessarily associated to doubly stabilized or weighted homogeneous singularities. See, for example $\cite{yl1}$, Theorem 1.2.
\end{remark}

\subsection{Matrix factorizations}\label{section:MF}

We start with a recap of the materials in $\cite{lu2}$, because of this the exposition here will be quite sketchy.

Let $\Gamma$ be a finite extension of $\mathbb{G}_m$, so its character group $\widecheck{\Gamma}:=\hom(\Gamma,\mathbb{G}_m)$  is a finite extension of $\mathbb{Z}$. Assume that $\Gamma$ embeds in $(\mathbb{G}_m)^{n+1}$ as a subgroup, so that it acts diagonally on the affine space $\mathbb{K}^{n+1}$. The action of $\Gamma$ induces a $\widecheck{\Gamma}$-grading on the coordinate ring $\mathbb{K}[z_1,\cdots,z_{n+1}]$. Define $\chi_i=\deg(z_i)$ for $i=1,\cdots,n+1$. Let $w\in\mathbb{K}[z_1,\cdots,z_{n+1}]_\chi$ be a polynomial which is homogeneous of degree $\chi\in\widecheck{\Gamma}$. Given this data, one can define a dg category $\mathit{MF}(\mathbb{K}^{n+1},\Gamma,w)$, the category of $\Gamma$-equivariant matrix factorizations of $w$. Denote by $(\cdot)$ downward shift of the weight grading. The category $\mathit{MF}(\mathbb{K}^{n+1},\Gamma,w)$ is split-generated by the object $E=\bigoplus_{\rho\in\Xi}\mathcal{O}_0(\rho)$, where $\mathcal{O}_0$ is the structure sheaf of the origin, and $\Xi\subset\widecheck{\Gamma}$ is a set of representatives of the quotient group $\widecheck{\Gamma}/(\chi)$. We assume that $\Xi$ is finite. Denote by $\mathcal{A}$ the endomorphism algebra of $E$ in $\mathit{MF}(\mathbb{K}^{n+1},\Gamma,w)$.

Introducing an additional variable $z_0$ with degree $\chi-\sum_{i=1}^{n+1}\chi_i$ allows us to define another dg category of $\Gamma$-equivariant matrix factorizations, $\mathit{MF}(\mathbb{K}^{n+2},\Gamma,w)$. This dg category is split-generated by the object $\widetilde{E}:=E\otimes\mathbb{K}[z_0]$. It is proved in $\cite{lu2}$, Section 4 that the endomorphism algebra of $\widetilde{E}$ is quasi-isomorphic to the $n$-Calabi-Yau completion $\Pi_n(\mathcal{A})$.

Now we take the group $\Gamma$ to be $\Gamma_w$, which is defined in (\ref{eq:group}), and assume that $w$ is as in (\ref{eq:conic}). Since $w$ is itself a Brieskorn-Pham polynomial, it follows from $\cite{fu}$, Theorem 1.3 that there is an equivalence between triangulated categories
\begin{equation}\label{eq:HMS-directed}
D^\mathit{perf}\mathcal{F}(\tilde{p})\cong D^\mathit{perf}\mathit{MF}(\mathbb{K}^{n+1},\Gamma_w,w),
\end{equation}
which establishes a version of homological mirror symmetry for these singularities. In fact, the original statement of $\cite{fu}$, Theorem 1.3 uses the graded version of the triangulated category of singularities instead of the triangulated category of equivariant matrix factorizations. However, according to $\cite{pv}$, Theorem 3.14, these two categories are equivalent as triangulated categories.

By Proposition \ref{proposition:CY-completion}, the equivalence (\ref{eq:HMS-directed}) implies the derived equivalence
\begin{equation}
D^\mathit{perf}\mathcal{W}(M)\cong D^\mathit{perf}\mathit{MF}(\mathbb{K}^{n+1},\Gamma_w,w).
\end{equation}
In order to reach the desired equivalence (\ref{eq:HMS}), we use $\cite{agv}$, Theorem 12.6. In our case, the appearance of the two quadratic terms in the definition of $w$ ensures that the monomial $z_0\cdots z_{n+1}$ lies above the diagonal of $w$, so $w+z_0\cdots z_{n+1}$ is right equivalent to $w$ by a formal coordinate change. Since this coordinate change can be chosen to be $\Gamma_w$-equivariant, we have the quasi-equivalence
\begin{equation}
\mathit{MF}(\mathbb{K}^{n+1},\Gamma_w,w)\cong\mathit{MF}(\mathbb{K}^{n+1},\Gamma_w,w+z_0\cdots z_{n+1}),
\end{equation}
which completes the proof of Theorem \ref{theorem:HMS}.

\begin{remark}\label{remark:LG}
As an affine conic bundle over the affine toric variety $\mathbb{C}^{n-1}$, $M$ admits another (equidimensional) Landau-Ginzburg model $(Y_0,w_0)$ as its mirror, where $Y_0$ is the complement of a smooth hypersurface in a toric Calabi-Yau variety, and $w_0:Y_0\rightarrow\mathbb{K}$ is the superpotential determined by the enumeration of Maslov index 2 holomorphic discs bounded by the fibers of a (piecewise smooth) Lagrangian torus fibration on an open dense subset of $M$. See $\cite{aak}$, Theorem 1.5. The Berglund-H\"{u}bsch mirror of $M$, on the other hand, is 2-dimensional higher than the original Milnor fiber. This phenomenon also appears in $\cite{aak}$, Section 7.
\end{remark}

\section{Hochschild cohomology}\label{section:Hochschild}

This is supposed to be the most technical section of this paper, where we compute the Hochschild cohomology of the dg category of $\Gamma_w$-equivariant matrix factorizations $\mathit{MF}(\mathbb{K}^{n+1},\Gamma_w,w)$. We will keep the notations as in Section \ref{section:MF}. Although we have proved homological mirror symmetry for the dg category $\mathit{MF}(\mathbb{K}^{n+1},\Gamma_w,w+z_0\cdots z_{n+1})$ in order to keep the statement to be consistent with $\cite{lu1}$, Conjecture 1.4, for the purpose of doing computations we will use the (quasi-equivalent) dg category $\mathit{MF}(\mathbb{K}^{n+1},\Gamma_w,w)$ instead. 

Denote by $V$ the vector space over $\mathbb{K}$ spanned by $\{z_0,\cdots,z_{n+1}\}$. For $\gamma\in\Gamma$, let $V_\gamma\subset V$ be the $\gamma$-invariant subspace, and let $S_\gamma:=\mathit{Sym}(V_\gamma)$ be the corresponding symmetric algebra. For $w\in S$, denote by $w_\gamma$ the restriction of $w$ to $\mathrm{Spec}(S_\gamma)$. Let $\Gamma$ be an abelian finite extension of $\mathbb{G}_m$ acting linearly on $\mathbb{K}^{n+1}$, denote by $N_\gamma$ the complement of $V_\gamma\subset V$ so that $V\cong V_\gamma\oplus N_\gamma$ as a $\Gamma$-module.

The following result have appeared a couple of times in the literature, and the author learned it from $\cite{lu1}$.

\begin{theorem}[$\cite{bfk,ct,td,es}$]
Let $w\in\mathbb{K}[z_0,\cdots,z_{n+1}]$ be a non-zero element of degree $\chi\in\widecheck{\Gamma}$. Assume that the singular locus of the zero set $Z_{-w\boxplus w}$ of the Sebastiani–Thom sum $-w\boxplus w$ is contained in the product of the zero sets $Z_w\times Z_w$. Then $\mathit{HH}^k\left(\mathit{MF}(\mathbb{K}^{n+1},\Gamma,w)\right)$ is isomorphic to
\begin{equation}\label{eq:HH}
	\begin{split}
		&\bigoplus_{\gamma\in\ker\chi,l\geq0,k-\dim N_\gamma=2u}\left(H^{-2l}(dw_\gamma)\otimes\wedge^{\dim N_\gamma}N_\gamma^\vee\right)_{(u+l)\chi}\oplus \\
		&\bigoplus_{\gamma\in\ker\chi,l\geq0,k-\dim N_\gamma=2u+1}\left(H^{-2l-1}(dw_\gamma)\otimes\wedge^{\dim N_\gamma}N_\gamma^\vee\right)_{(u+l)\chi},
	\end{split}
\end{equation}
where $H^i(dw_\gamma)$ is the cohomology of the Koszul complex
\begin{equation}\label{eq:cpx}
C^\ast(dw_\gamma):=\left\{\cdots\rightarrow\wedge^2V_\gamma^\vee\otimes S_\gamma(-2\chi)\rightarrow V_\gamma^\vee\otimes S_\gamma(-\chi)\rightarrow S_\gamma\right\},
\end{equation}
where the rightmost term $S_\gamma$ sits in cohomological degree 0, and the differential is the contraction with
\begin{equation}
dw_\gamma\in(V_\gamma\otimes S_\gamma)_\chi.
\end{equation}
\end{theorem}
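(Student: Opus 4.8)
The plan is to express $\mathit{HH}^\ast$ as the self-extension group of the diagonal bimodule and then to make that group explicit by realizing bimodules as objects of a matrix factorization category on a product, where a finite Koszul-type resolution of the diagonal is available. Write $X=\mathrm{Spec}(S)$, $S=\mathit{Sym}(V)$, and $\mathcal{C}=\mathit{MF}(X,\Gamma,w)$. First I would use $\mathit{HH}^\ast(\mathcal{C})\cong\mathrm{Ext}^\ast_{\mathcal{C}^e}(\Delta_\mathcal{C},\Delta_\mathcal{C})$, where $\mathcal{C}^e=\mathcal{C}\otimes_\mathbb{K}\mathcal{C}^\mathrm{op}$ and $\Delta_\mathcal{C}$ is the diagonal bimodule; then the identification of opposite categories $\mathcal{C}^\mathrm{op}\simeq\mathit{MF}(X,\Gamma,-w)$ together with the Thom--Sebastiani equivalence for $\Gamma$-equivariant matrix factorizations gives $\mathcal{C}^e\simeq\mathit{MF}(X\times X,\Gamma\times\Gamma,-w\boxplus w)$. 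Under these identifications $\Delta_\mathcal{C}$ corresponds to the pushforward of $\mathcal{O}_X$ along the closed equivariant diagonal $[X/\Gamma]\hookrightarrow[X/\Gamma]\times[X/\Gamma]$, which is an honest object of the target because $-w\boxplus w$ vanishes on the diagonal; so $\mathit{HH}^\ast(\mathcal{C})$ is computed by the self-$\mathrm{Hom}$-complex of this diagonal object in $\mathit{MF}(X\times X,\Gamma\times\Gamma,-w\boxplus w)$.

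The hypothesis enters next. The category $\mathit{MF}(X\times X,\Gamma\times\Gamma,-w\boxplus w)$ is supported on the singular locus of $Z_{-w\boxplus w}$; assuming this locus lies in $Z_w\times Z_w$ forces it to be $\mathrm{Crit}(w)\times\mathrm{Crit}(w)$ and, more to the point, guarantees that the diagonal object together with its weight shifts generates the relevant idempotent-complete subcategory, so that the above $\mathrm{Hom}$-complex may be evaluated by any convenient resolution of the diagonal, with no completion corrections. This is the precise analogue, for the product potential, of the isolated-singularity hypothesis in the affine computation of Hochschild cohomology of matrix factorizations. I would then resolve the diagonal by the standard Koszul matrix factorization: writing $-w(z)+w(z')=\sum_i(z_i'-z_i)\,g_i(z,z')$ with $g_i$ polynomial and $g_i|_\Delta=\partial_{z_i}w$ (e.g.\ $g_i=\int_0^1(\partial_i w)(z+t(z'-z))\,dt$), the exterior algebra $\wedge^\bullet V^\vee\otimes\mathcal{O}_{X\times X}$ with odd differential $\iota_{\sum_i(z_i-z_i')\partial_i}+\bigl(\sum_i g_i\,dz_i\bigr)\wedge(-)$ is a $(\Gamma\times\Gamma)$-equivariant factorization of $-w\boxplus w$ quasi-isomorphic to the diagonal object. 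Computing its endomorphism dg algebra and restricting to the diagonal sends $g_i\mapsto\partial_i w$, so the cohomology collapses onto the Koszul complex $C^\ast(dw)$ of~(\ref{eq:cpx}), carrying its natural $\widecheck{\Gamma}$-grading, with each factor $S(-\chi)$ recording one application of the two-periodicity.

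Finally I would fold in the $\Gamma$-equivariance. Passing to the derived loop space of $[X/\Gamma]$ — equivalently, decomposing the $\Gamma\times\Gamma$-action on the Hochschild complex and localizing — breaks the computation into sectors indexed by $\gamma\in\Gamma$: the $\gamma$-sector localizes to the fixed locus $\mathrm{Spec}(S_\gamma)\subset X$, where the previous step produces $H^\ast(dw_\gamma)$, tensored with the top exterior power $\wedge^{\dim N_\gamma}N_\gamma^\vee$ of the conormal (the Berezinian of the $\gamma$-action on the normal directions $N_\gamma$) and shifted cohomologically by $\dim N_\gamma$ — which is exactly why~(\ref{eq:HH}) splits according to the parity of $k-\dim N_\gamma$. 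Only the sectors with $\gamma\in\ker\chi$ contribute: this is the condition for $w_\gamma$ to be $\widecheck{\Gamma}$-homogeneous and for the $\gamma$-sector to carry an integral grading. Tracking the $\widecheck{\Gamma}$-grading through the fixed-locus resolution and the periodicity then places the piece contributed by $\gamma$ and $H^{-2l}(dw_\gamma)$ in weight $(u+l)\chi$, with $u$ fixed by the cohomological degree via $2u=k-\dim N_\gamma$ in the even case and $2u+1=k-\dim N_\gamma$ in the odd case, which assembles the stated formula.

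The hard part will be this last step. Getting the orbifold bookkeeping exactly right — the Berezinian twist $\wedge^{\dim N_\gamma}N_\gamma^\vee$, the shift by $\dim N_\gamma$ and the resulting parity split, the vanishing outside $\ker\chi$, and the precise weight $(u+l)\chi$ on each summand — requires propagating the $\widecheck{\Gamma}$-grading carefully through the diagonal resolution on each fixed locus and through the two-periodicity, and checking compatibility with the Thom--Sebastiani and duality equivalences in the equivariant setting. By contrast, the first three steps are essentially the affine, non-equivariant picture, for which the Koszul matrix factorization resolution of the diagonal is by now standard; the genuinely delicate structural input is that the singular-locus hypothesis is precisely what makes that resolution compute the correct $\mathrm{Hom}$-complex with no higher corrections.
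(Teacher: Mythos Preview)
The paper does not prove this theorem at all: it is quoted as a result from the literature (Ballard--Favero--Katzarkov, C\u{a}ld\u{a}raru--Tu, Dyckerhoff, Segal), introduced with the sentence ``The following result have appeared a couple of times in the literature, and the author learned it from [LU1],'' and then used as a black-box computational tool. So there is no proof in the paper to compare against.

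That said, your sketch is a faithful outline of how the cited references actually establish this formula. The identification $\mathit{HH}^\ast(\mathcal{C})\cong\mathrm{Ext}^\ast_{\mathcal{C}^e}(\Delta_\mathcal{C},\Delta_\mathcal{C})$ together with $\mathcal{C}^e\simeq\mathit{MF}(X\times X,\Gamma\times\Gamma,-w\boxplus w)$ and the Koszul matrix factorization resolution of the diagonal is precisely Dyckerhoff's approach in the non-equivariant case, and the decomposition into $\gamma$-sectors with the conormal twist $\wedge^{\dim N_\gamma}N_\gamma^\vee$ and shift by $\dim N_\gamma$ is how the equivariant refinement is carried out in the other references. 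Your identification of where the singular-locus hypothesis enters (ensuring the Koszul resolution computes the correct $\mathrm{Hom}$ without completion) and of the ``hard part'' (the equivariant bookkeeping) is accurate. One small point: you should be more careful that the restriction to $\gamma\in\ker\chi$ is not merely about $w_\gamma$ being homogeneous --- it is the condition that the $\gamma$-twisted sector carries the correct internal $\mathbb{Z}$-grading compatible with the shift functor $(\chi)$, since $\Gamma$ acts through $\chi$ on the grading shift and only $\gamma$ fixing $\chi$ preserve the grading structure.
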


Let us describe a few concrete situations that are relevant to our computations, in which case the expression (\ref{eq:HH}) gets simplified drastically. When $w_\gamma$ has an isolated critical point at the origin, the cohomology of the Koszul complex (\ref{eq:cpx}) is concentrated in degree zero, and its zeroth cohomology is isomorphic to the Jacobi ring $\mathit{Jac}_{w_\gamma}$. As a consequence, only the summand corresponding to $l=0$ in (\ref{eq:HH}), namely
\begin{equation}
\left(\mathit{Jac}_{w_\gamma}\otimes\wedge^{\dim N_\gamma}N_\gamma^\vee\right)_{u\chi},
\end{equation}
would contribute to $\mathit{HH}^k\left(\mathit{MF}(\mathbb{K}^{n+1},\Gamma,w)\right)$, where $k=2u+\dim N_\gamma$.

Moreover, if $V_\gamma$ contains $\mathbb{K}z_0$, then $C^\ast(dw_\gamma)$ is isomorphic to the tensor product
\begin{equation}
C^\ast(dw_\gamma')\otimes\left\{\mathbb{K}z_0^\vee\otimes\chi^\vee\otimes\mathbb{K}[z_0]\rightarrow\mathbb{K}[z_0]\right\},
\end{equation}
where $w_\gamma'$ is the restriction of $w$ to the complement of $\mathbb{K}z_0\subset V_\gamma$, and the latter complex is concentrated in cohomological degrees $\left\{-1,0\right\}$ with vanishing differential. If $w_\gamma'$ has an isolated critical point at the origin, then as above the Koszul complex $C^\ast(dw_\gamma')$ is isomorphic to $\mathit{Jac}_{w_\gamma'}$, which is concentrated in degree 0, so only the summands
\begin{equation}
\left(\mathit{Jac}_{w_\gamma'}\otimes\mathbb{K}[z_0]\otimes\wedge^{\dim N_\gamma}N_\gamma^\vee\right)_{u\chi}
\end{equation}
and
\begin{equation}
\left(\mathbb{K}z_0^\vee\otimes\mathit{Jac}_{w_\gamma'}\otimes\mathbb{K}[z_0]\otimes\wedge^{\dim N_\gamma}N_\gamma^\vee\right)_{u\chi},
\end{equation}
which correspond to $l=0$ in (\ref{eq:HH}), would contribute to $\mathit{HH}^k\left(\mathit{MF}(\mathbb{K}^{n+1},\Gamma,w)\right)$, where $k=2u+\dim N_\gamma$ and $2u+\dim N_\gamma+1$ respectively.
\bigskip

As we have remarked in the introduction, what is relevant to our application is the Hochschild cohomology group $\mathit{HH}^0\left(\mathit{MF}(\mathbb{K}^{n+2},\Gamma_w,w)\right)$, and we now show that it is finite dimensional. On the other hand, as the reader will see, $\mathit{HH}^n\left(\mathit{MF}(\mathbb{K}^{n+2},\Gamma_w,w)\right)$ can also be computed explicitly in our situation, which is a fact that will be explored in Section \ref{section:argument}. For the purpose of proving Theorem \ref{theorem:main}, we will not attempt to do computations in the general case when $w$ is a polynomial of the form (\ref{eq:conic}). Instead, simplifying assumptions will be imposed to minimize our computational burden.

\begin{proposition}\label{proposition:HH}
Let $w$ be a polynomial as in (\ref{eq:conic}), regarded as an element of the ring $\mathbb{K}[z_0,\cdots,z_{n+1}]$, and assume that the powers $k_3,\cdots,k_{n+1}$ in the Brieskorn-Pham polynomial $p(z_{n+3},\cdots,z_{n+1})$ are distinct prime numbers, with $3\leq k_3\leq\cdots\leq k_{n+1}$. Then we have
\begin{equation}\label{eq:0}
\dim_\mathbb{K}\mathit{HH}^0\left(\mathit{MF}(\mathbb{K}^{n+2},\Gamma_w,w)\right)=k_3-1,
\end{equation}
and
\begin{equation}\label{eq:n}
\dim_\mathbb{K}\mathit{HH}^n\left(\mathit{MF}(\mathbb{K}^{n+2},\Gamma_w,w)\right)=\mu,
\end{equation}
where $\mu=(k_3-1)\cdots(k_{n+1}-1)$ is the Milnor number of the affine hypersurface $M$ defined by the polynomial $\{w=1\}\subset\mathbb{C}^{n+1}$.
\end{proposition}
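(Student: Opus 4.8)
The plan is to read both dimensions off the formula (\ref{eq:HH}) together with the simplifications recorded after it, after first making $\Gamma_w$ and its character lattice completely explicit. Since $w$ in (\ref{eq:conic}) is the Brieskorn--Pham polynomial with diagonal exponent matrix $\mathrm{diag}(2,2,k_3,\dots,k_{n+1})$, writing $\chi_i=\deg z_i$ and $\chi=\deg w$ the relations defining $\Gamma_w$ become $2\chi_1=2\chi_2=k_3\chi_3=\cdots=k_{n+1}\chi_{n+1}=\chi$ together with $\chi=\chi_0+\chi_1+\cdots+\chi_{n+1}$, so that $\chi_0=\chi-\sum_{i=1}^{n+1}\chi_i$ as prescribed. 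Because $2,k_3,\dots,k_{n+1}$ are pairwise coprime, a Smith normal form computation gives $\widecheck{\Gamma}_w\cong\mathbb{Z}\oplus\mathbb{Z}/2$: the free part is detected by $\chi_i\mapsto d/k_i$ with $d=2k_3\cdots k_{n+1}$ (so $\chi\mapsto d$ and $\chi_0\mapsto-\sum_{i\ge3}d/k_i$), while the torsion $\mathbb{Z}/2$ is generated by $\chi_1-\chi_2$, with $\bar\chi_i=0$ for $i\ge3$, $\bar\chi_1\ne\bar\chi_2$, and $\bar\chi_0=-(\bar\chi_1+\bar\chi_2)\ne0$. Dually, $\ker\chi\subset\Gamma_w$ is the finite group of tuples $(t_0,\dots,t_{n+1})$ with $t_1^2=t_2^2=1$, $t_i^{k_i}=1$ for $i\ge3$, and $t_0t_1\cdots t_{n+1}=1$; such a $\gamma$ acts diagonally on $\mathbb{K}^{n+2}$, with $V_\gamma$ spanned by those $z_j$ for which $t_j=1$, $N_\gamma$ by the rest, and $w_\gamma=\sum_{i\ge1,\,t_i=1}z_i^{k_i}$.

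The next step uses that $z_1^2+z_2^2$ is a nondegenerate quadratic form and $p$ has an isolated singularity: for every $\gamma$ the polynomial $w_\gamma$ — or, when $z_0\in V_\gamma$, its restriction $w'_\gamma$ to the complement of $\mathbb{K}z_0$ — has an isolated critical point at the origin. Hence each Koszul complex $C^\ast(dw_\gamma)$ has cohomology concentrated in degree $0$ (resp.\ in degrees $\{-1,0\}$ when $z_0\in V_\gamma$), equal to $\mathit{Jac}_{w_\gamma}$ (resp.\ $\mathit{Jac}_{w'_\gamma}\otimes\mathbb{K}[z_0]$), and (\ref{eq:HH}) collapses to a finite sum over $\gamma\in\ker\chi$ of prescribed graded pieces of $\mathit{Jac}_{w_\gamma}\otimes\wedge^{\dim N_\gamma}N_\gamma^\vee$, with an extra $\mathbb{K}[z_0]$--factor and a $z_0^\vee$--summand when $z_0\in V_\gamma$; here $u$ is determined by $k-\dim N_\gamma=2u$ or $2u+1$, and one retains the summand of internal degree $(u+l)\chi$ for $l\ge0$. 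Everything thus reduces to counting monomials of a prescribed $\widecheck{\Gamma}_w$--degree, tracking its free and torsion components simultaneously.

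For $\mathit{HH}^0$ one first observes that $\dim N_\gamma\ne1$ for any $\gamma$, and a weight estimate then leaves only the sectors with $\dim N_\gamma\in\{0,2\}$; among those with $\dim N_\gamma=2$, a further weight comparison excludes all but $\gamma_0:=(1,-1,-1,1,\dots,1)$, for which $w'_{\gamma_0}=p$. Both surviving sectors, $\gamma=1$ and $\gamma=\gamma_0$, lead to the same free--part equation $\sum_{i\ge3}(a_i-a_0)/k_i=0$ for a monomial $z_0^{a_0}\prod_{i\ge3}z_i^{a_i}$; reducing it modulo each $k_i$ forces $a_i\equiv a_0$, and demanding that the integer quotients sum to zero forces $a_0<k_3$, whence $a_i=a_0$ for all $i$ and $a_0\in\{0,\dots,k_3-2\}$. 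The residual $\mathbb{Z}/2$--torsion then fixes the parity of $a_0$: the untwisted sector $\gamma=1$ keeps exactly the even values and $\gamma_0$ exactly the odd ones, so the count is $(k_3-1)/2+(k_3-1)/2=k_3-1$, which is (\ref{eq:0}).

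For $\mathit{HH}^n$ the cohomology is instead carried by the sectors $\gamma$ acting \emph{freely} on $\mathbb{K}^{n+2}$, i.e.\ with $t_j\ne1$ for all $j$: then $V_\gamma=0$, $w_\gamma=0$, $\mathit{Jac}_{w_\gamma}=\mathbb{K}$, and $\wedge^{\dim N_\gamma}N_\gamma^\vee=\wedge^{n+2}V^\vee$ is one--dimensional of internal degree $-\sum_{j=0}^{n+1}\chi_j=-\chi$, which is exactly the degree $(u+l)\chi$ forced by $k-\dim N_\gamma=n-(n+2)=-2$ (so $u=-1$) and $l=0$; hence each such $\gamma$ contributes precisely one dimension. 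A freely acting $\gamma\in\ker\chi$ must have $t_1=t_2=-1$ and $t_i\ne1$ for all $i\ge3$, and then $t_0=(t_3\cdots t_{n+1})^{-1}$ — here the distinct--primes hypothesis enters essentially, since coprimality of $k_3,\dots,k_{n+1}$ forces $t_3\cdots t_{n+1}\ne1$, hence $t_0\ne1$ automatically, so the number of such sectors is exactly $\prod_{i\ge3}(k_i-1)=\mu$. A weight and parity check as above shows that no other sector contributes in degree $n$, giving (\ref{eq:n}). The main obstacle throughout is precisely this kind of verification — certifying that the spurious twisted sectors vanish — which requires tracking the free and torsion parts of $\widecheck{\Gamma}_w$ at once, and where the distinctness (equivalently, coprimality) of the exponents is indispensable.
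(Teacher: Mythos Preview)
Your proposal is correct and follows essentially the same strategy as the paper: reduce via the simplifications after (\ref{eq:HH}) to a graded-monomial count over $\gamma\in\ker\chi$, identify the contributing sectors, and count. The one organizational difference is that you first compute the Smith normal form $\widecheck{\Gamma}_w\cong\mathbb{Z}\oplus\mathbb{Z}/2$ (with torsion generated by $\chi_1-\chi_2$, and $\bar\chi_i=0$ for $i\ge3$, $\bar\chi_0\ne0$) and then track the free and torsion components of each degree separately, whereas the paper works directly in $\widecheck{\Gamma}_w$ and checks membership in $\mathbb{Z}\chi$ by reducing modulo the relations (\ref{eq:relation}). Your decomposition makes the parity bookkeeping cleaner --- the split of the $k_3-1$ generators of $\mathit{HH}^0$ into even and odd $a_0$ becomes transparent, and the $\mu$ freely acting sectors for $\mathit{HH}^n$ are isolated immediately --- while the paper's direct approach makes the exclusion of the spurious sectors (your ``weight estimates'' and ``parity checks'') fully explicit: it enumerates the possible $V_\gamma$ according to which of $z_0,z_1,z_2$ they contain, writes out the degree in each case, and verifies it is never the required multiple of $\chi$. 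You rightly identify this exclusion as the main burden; the paper's case analysis supplies precisely those details, and your framework would reproduce them once the free-part congruences and $\mathbb{Z}/2$-parities are written out in each sector.
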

\begin{proof}
By definition,
\begin{equation}\label{eq:Gamma}
\Gamma_w=\left\{(t_0,\cdots,t_{n+1})\in(\mathbb{G}_m)^{n+2}|t_1^2=t_2^2=t_3^{k_3}=\cdots=t_{n+1}^{k_{n+1}}=t_0\cdots t_{n+1}\right\},
\end{equation}
which implies that
\begin{equation}
\ker(\chi)\cong(\mathbb{Z}/2)^2\times\mathbb{Z}/k_3\times\cdots\mathbb{Z}/k_n.
\end{equation}
The character group $\widecheck{\Gamma}$ is generated by $\chi$ and $\chi_i=\deg(z_i)$ for $0\leq i\leq n+1$ with relations
\begin{equation}\label{eq:relation}
\chi=2\chi_1=2\chi_2=k_3\chi_3=\cdots=k_{n+1}\chi_{n+1}=\chi_0+\cdots+\chi_{n+1}.
\end{equation}
For any $\gamma\in\ker(\chi)$, one has
\begin{equation}
	\begin{split}
		\mathit{Jac}_{w_\gamma}\cong&\left\{\begin{array}{ll}
			\mathbb{K}[z_0] & \textrm{if }\mathbb{K}z_0\subset V_\gamma \\ \mathbb{K} & \textrm{if }\mathbb{K}z_0\not\subset V_\gamma
		\end{array}\right.
		\otimes
		\left\{\begin{array}{ll}
			\mathbb{K}[z_3]/(z_3^{k_3-1}) & \textrm{if }\mathbb{K}z_3\subset V_\gamma \\ \mathbb{K} & \textrm{if }\mathbb{K}z_3\not\subset V_\gamma
		\end{array}\right. \\
	&\otimes\cdots\otimes
	\left\{\begin{array}{ll}
		\mathbb{K}[z_{n+1}]/(z_{n+1}^{k_{n+1}-1}) & \textrm{if }\mathbb{K}z_{n+1}\subset V_\gamma \\ \mathbb{K} & \textrm{if }\mathbb{K}z_{n+1}\not\subset V_\gamma
	\end{array}\right.
	\end{split}
\end{equation}
Note that by our assumptions, none of the truncated polynomial rings $\mathbb{K}[z_i]/(z_i^{k_i-1})$ for $3\leq i\leq n+1$ could be isomorphic to $\mathbb{K}$. Write an element of $\mathit{Jac}_{w_\gamma}\otimes\wedge^{\dim N_\gamma}N_\gamma^\vee$ as
\begin{equation}
z_0^{a_0}z_3^{a_3}\cdots z_{n+1}^{a_{n+1}}\otimes z_{j_1}^\vee\wedge\cdots\wedge z_{j_s}^\vee,
\end{equation}
where $a_i=0$ if $\mathbb{K}z_i\not\subset V_\gamma$ for $i=0,3,\cdots,n+1$, and $a_i\leq k_i-2$ for $i=3,\cdots,n+1$. Its degree is
\begin{equation}\label{eq:deg}
a_0\chi_0+a_3\chi_3+\cdots+a_{n+1}\chi_{n+1}-\chi_{j_1}-\cdots-\chi_{j_s}.
\end{equation}
By (\ref{eq:relation}), (\ref{eq:deg}) can be proportional to $\chi$ if $V_\gamma\cap(\mathbb{K}z_1\oplus\mathbb{K}z_2)$ is either $\mathbb{K}z_1\oplus\mathbb{K}z_2$ or $\emptyset$.

We first look at the case when $V_\gamma=\emptyset$, where one must have
\begin{equation}
t_1=t_2=-1,t_3\in(\mathbb{Z}/k_3)\setminus\{1\},\cdots,t_{n+1}\in(\mathbb{Z}/{k_{n+1}})\setminus\{1\},
\end{equation}
since $t_0=t_3^{-1}\cdots t_{n+1}^{-1}\neq1$ by our assumptions that $k_3,\cdots,k_{n+1}$ are distinct prime numbers. There are $\mu$ such $\gamma$, and each one of them contributes a copy of $\mathbb{K}$ to the Hochschild cohomology group $\mathit{HH}^n\left(\mathit{MF}(\mathbb{K}^{n+2},\Gamma_w,w)\right)$, since the degree of
\begin{equation}
z_0^\vee\wedge z_1^\vee\wedge\cdots\wedge z_{n+1}^\vee\in\wedge^{n+2}V^\vee
\end{equation}
is $-\chi$.

To prove the identity (\ref{eq:n}), it remains to show that the other possibilities of $V_\gamma$ do not contribute to $\mathit{HH}^n\left(\mathit{MF}(\mathbb{K}^{n+2},\Gamma_w,w)\right)$. To do this, we separate our discussions into two cases, namely
\begin{itemize}
	\item[(i)] $\mathbb{K}z_0\subset V_\gamma$,
	\item[(ii)] $\mathbb{K}z_0\not\subset V_\gamma$ and $V_\gamma\neq\emptyset$.
\end{itemize}

In the first case, since $t_1^2=t_2^2=1$, it follows from (\ref{eq:Gamma}) that $t_i\in\mathbb{Z}/k_i$ for $i=3,\cdots,n+1$ and $t_3\cdots t_{n+1}=1$. By our assumption that $k_3,\cdots,k_{n+1}$ are distinct primes, the latter is possible only when $t_3=\cdots=t_{n+1}=1$. Thus $\mathbb{K}z_0\subset V_\gamma$ if and only if
\begin{itemize}
	\item $\gamma=(1,\cdots,1)$, in which case $V_\gamma=V$;
	\item $\gamma=(1,-1,-1,1,\cdots,1)$, in which case $V_\gamma=\mathbb{K}z_0\oplus\mathbb{K}z_3\oplus\cdots\oplus\mathbb{K}z_{n+1}$.
\end{itemize}

When $V_\gamma=V$, $w_\gamma=w$, the degree of $z_0^{a_0}z_3^{a_3}\cdots z_{n+1}^{a_{n+1}}$ is
\begin{equation}
a_0\chi-a_0\chi_1-a_0\chi_2-(a_0-a_3)\chi_3-\cdots-(a_0-a_{n+1})\chi_{n+1},
\end{equation}
where (\ref{eq:relation}) was used in the derivation of the above formula. This is proportional to $\chi$ if and only if
\begin{equation}
2|a_0,k_3|(a_0-a_3),\cdots,k_{n+1}|(a_0-a_{n+1}).
\end{equation}
Write $a_0=2r$, where $r\in\mathbb{Z}_{\geq0}$ and $a_i=2r-k_im$, where $m$ is an integer satisfying
\begin{equation}\label{eq:m}
\frac{2r+2}{k_i}-1\leq m\leq\frac{2r}{k_i}
\end{equation}
for $i=3,\cdots,n+1$, we have
\begin{equation}
\deg\left(z_0^{2r}z_3^{2r-k_3m}\cdots z_{n+1}^{2r-k_{n+1}m}\right)=m(1-n)\chi.
\end{equation}
It follows that each element of the form
\begin{equation}
z_0^{2r}z_3^{2r-k_3m}\cdots z_{n+1}^{2r-k_{n+1}m}\in\mathit{Jac}_w\otimes\mathbb{K}[z_0]
\end{equation}
contributes a copy of $\mathbb{K}$ to $\mathit{HH}^{2m(1-n)}\left(\mathit{MF}(\mathbb{K}^{n+2},\Gamma_w,w)\right)$. Similarly, each element of the form
\begin{equation}
z_0^\vee\otimes z_0^{2r}z_3^{2r-k_3m}\cdots z_{n+1}^{2r-k_{n+1}m}\in\mathbb{K}z_0^\vee\otimes\mathit{Jac}_w\otimes\mathbb{K}[z_0]
\end{equation}
contributes a copy of $\mathbb{K}$ to $\mathit{HH}^{2m(1-n)+1}\left(\mathit{MF}(\mathbb{K}^{n+2},\Gamma_w,w)\right)$. It is easy to find out that
\begin{equation}
2m(1-n)\neq n,n-1
\end{equation}
for any $m\in\mathbb{Z}$ whenever $n\geq3$, and the only integer solution for the equation $2m(1-n)=n$ is $m=-1$ and $n=2$. To exclude this possibility, we make use of the condition (\ref{eq:m}), which implies in this case that $\frac{2r+2}{k_3}\leq0$, and therefore contradicts with the fact that $r\in\mathbb{Z}_{\geq0}$. We have proved that when $V_\gamma=V$, there is no additional contributions to $\mathit{HH}^n\left(\mathit{MF}(\mathbb{K}^{n+2},\Gamma_w,w)\right)$.

When $V_\gamma=\mathbb{K}z_0\oplus\mathbb{K}z_3\oplus\cdots\oplus\mathbb{K}z_{n+1}$, using (\ref{eq:relation}), we can compute the degree of
\begin{equation}
z_0^{a_0}z_3^{a_3}\cdots z_{n+1}^{a_{n+1}}\otimes z_1^\vee\wedge z_2^\vee\in\mathit{Jac}_{w_\gamma}\otimes\wedge^2N_\gamma^\vee,
\end{equation}
which is
\begin{equation}
a_0\chi-(a_0+1)\chi_1-(a_0+1)\chi_2-(a_0-a_3)\chi_3-\cdots-(a_0-a_{n+1})\chi_{n+1}.
\end{equation}
This is proportional to $\chi$ is and only if
\begin{equation}
2|a_0+1,k_3|(a_0-a_3),\cdots,k_{n+1}|(a_0-a_{n+1}).
\end{equation}
Write $a_0=2r-1$ with $r\in\mathbb{N}$, and $a_i=2r-1-k_im$, where $m\in\mathbb{Z}$ satisfies
\begin{equation}\label{eq:m1}
\frac{2r+1}{k_i}-1\leq m\leq\frac{2r-1}{k_i}
\end{equation}
for $i=3,\cdots,n+1$. We have
\begin{equation}
\deg\left(z_0^{2r-1}z_3^{2r-1-k_3m}\cdots z_{n+1}^{2r-1-k_{n+1}m}\otimes z_1^\vee\wedge z_2^\vee\right)=-\left((n-1)m+1\right)\chi.
\end{equation}
It follows that each element of the form
\begin{equation}
z_0^{2r-1}z_3^{2r-1-k_3m}\cdots z_{n+1}^{2r-1-k_{n+1}m}\otimes z_1^\vee\wedge z_2^\vee\in\mathit{Jac}_{w_\gamma'}\otimes\mathbb{K}[z_0]\otimes\wedge^2N_\gamma^\vee
\end{equation}
contributes a copy of $\mathbb{K}$ to $\mathit{HH}^{2m(1-n)}\left(\mathit{MF}(\mathbb{K}^{n+2},\Gamma_w,w)\right)$ . Similarly, each element of the form
\begin{equation}
z_0^\vee\otimes z_0^{2r-1}z_3^{2r-1-k_3m}\cdots z_{n+1}^{2r-1-k_{n+1}m}\otimes z_1^\vee\wedge z_2^\vee\in\mathbb{K}z_0^\vee\otimes\mathit{Jac}_{w_\gamma'}\otimes\mathbb{K}[z_0]\otimes\wedge^2N_\gamma^\vee
\end{equation}
contributes a copy of $\mathbb{K}$ to $\mathit{HH}^{2m(1-n)+1}\left(\mathit{MF}(\mathbb{K}^{n+2},\Gamma_w,w)\right)$. It is easy to see that if $n\geq3$, then the equations
\begin{equation}
2m(1-n)=n-1 \textrm{ and }2m(1-n)=n
\end{equation}
do not admit any integer solutions, and when $n=2$ and $m=-1$, it would contradict (\ref{eq:m1}), as we have seen above. In this way, we have proved that when $V_\gamma=\mathbb{K}z_0\oplus\mathbb{K}z_3\oplus\cdots\oplus\mathbb{K}z_{n+1}$, there is no additional contributions to $\mathit{HH}^n\left(\mathit{MF}(\mathbb{K}^{n+2},\Gamma_w,w)\right)$.

This completes the discussion for case (i). For case (ii), we first assume that $V_\gamma\cap(\mathbb{K}z_1\oplus\mathbb{K}z_2)=\emptyset$. Since $V_\gamma\neq\emptyset$, there is a non-empty subset $\{j_1,\cdots,j_s\}\subset\{3,\cdots,n+1\}$, where $s\leq n-1$, such that
\begin{equation}
V_\gamma=\mathbb{K}z_{j_1}\oplus\cdots\oplus\mathbb{K}z_{j_s}.
\end{equation}
Without loss of generality, we may assume that $\{j_1,\cdots,j_s\}=\{3,\cdots,s+2\}$, so an element
\begin{equation}
z_3^{a_3}\cdots z_{s+2}^{a_{s+2}}\otimes z_0^\vee\wedge z_1^\vee\wedge z_2^\vee\wedge z_{s+3}^\vee\wedge\cdots\wedge z_{n+1}^\vee\in\mathit{Jac}_{w_\gamma}\otimes\wedge^{\dim N_\gamma}N_\gamma^\vee
\end{equation}
has degree
\begin{equation}
(a_3+1)\chi_3+\cdots+(a_{s+2}+1)\chi_{s+2}-\chi,
\end{equation}
which cannot be proportional to $\chi$ since $a_i+1<k_i$ for $3\leq i\leq n+1$.

If $\mathbb{K}z_1\oplus\mathbb{K}z_2\subset V_\gamma$, the subset $\{j_1,\cdots,j_s\}$ above may be empty. In the non-empty case, the degree of
\begin{equation}
z_3^{a_3}\cdots z_{s+2}^{a_{s+2}}\otimes z_0^\vee\wedge z_{s+3}^\vee\wedge\cdots\wedge z_{n+1}^\vee\in\mathit{Jac}_{w_\gamma}\otimes\wedge^{\dim N_\gamma}N_\gamma^\vee
\end{equation}
is given by
\begin{equation}
(a_3+1)\chi_3+\cdots+(a_{s+2}+1)\chi_{s+2},
\end{equation}
which cannot be proportional to $\chi$ for the same reason as above.

Finally, when $V_\gamma=\mathbb{K}z_1\oplus\mathbb{K}z_2$, the only possible contribution to $\mathit{HH}^\ast\left(\mathit{MF}(\mathbb{K}^{n+2},\Gamma_w,w)\right)$ comes from the element
\begin{equation}\label{eq:top}
z_0^\vee\wedge z_3^\vee\wedge\cdots\wedge z_{n+1}^\vee\in\wedge^nN_\gamma^\vee,
\end{equation}
whose degree is not proportional to $\chi$, so it cannot contribute.

We now prove (\ref{eq:0}). From the above analysis, one can see that the potential contributions to $\mathit{HH}^0\left(\mathit{MF}(\mathbb{K}^{n+2},\Gamma_w,w)\right)$ happen in the situations when $V_\gamma=V$ and $V_\gamma=\mathbb{K}z_0\oplus\mathbb{K}z_3\oplus\cdots\mathbb{K}z_{n+1}$.

In the former case, possible contributions to $\mathit{HH}^0\left(\mathit{MF}(\mathbb{K}^{n+2},\Gamma_w,w)\right)$ can occur only when $m(1-n)=0$, which happens when $m=0$. It follows from (\ref{eq:m}) that
\begin{equation}
2\leq 2r+2\leq k_3.
\end{equation}
Clearly, there can be $\left\lfloor\frac{k_3-2}{2}\right\rfloor+1$ such $r$, and each one corresponds to a generator of $\mathit{HH}^0\left(\mathit{MF}(\mathbb{K}^{n+2},\Gamma_w,w)\right)$.

In the latter case, one gets a contribution to $\mathit{HH}^0\left(\mathit{MF}(\mathbb{K}^{n+2},\Gamma_w,w)\right)$ when $m=0$. By (\ref{eq:m1}) we have
\begin{equation}
3\leq 2r+1\leq k_3.
\end{equation}
It is elementary to see that there are $\left\lfloor\frac{k_3-1}{2}\right\rfloor$ such $r$. All together, we get
\begin{equation}
\left\lfloor\frac{k_3-2}{2}\right\rfloor+\left\lfloor\frac{k_3-1}{2}\right\rfloor+1=k_3-1
\end{equation}
copies of $\mathbb{K}$ in $\mathit{HH}^0\left(\mathit{MF}(\mathbb{K}^{n+2},\Gamma_w,w)\right)$, which completes the proof of (\ref{eq:0}).
\end{proof}

\begin{remark}
Note that when $n=2$, our computation coincides with the computation of $\cite{etl1}$ for the symplectic cohomology of a 4-dimensional $A_{k_3-1}$ Milnor fiber.
\end{remark}

\section{The main argument}\label{section:argument}

This section is devoted to the proof of our main result. We start by recalling the following algebraic fact:

\begin{theorem}[\"{O}inert]\label{theorem:unit}
Let $\mathbb{K}$ be any field, and $G$ a torsion-free group, then any central unit in the group algebra $\mathbb{K}[G]$ has the form $\lambda\cdot g$, with $\lambda\in\mathbb{K}^\times$ and $g\in G$.
\end{theorem}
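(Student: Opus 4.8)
The plan is to push the problem onto the group algebra of the FC-center of $G$ and run an ordering argument there. Write $\Delta(G)=\{g\in G:[G:C_G(g)]<\infty\}$ for the FC-center, a characteristic subgroup. First I would record the classical observation that every central element of $\mathbb{K}[G]$ is supported on $\Delta(G)$: if $u=\sum_g\lambda_g g$ commutes with every $x\in G$, then comparing coefficients gives $\lambda_{xgx^{-1}}=\lambda_g$, so $\mathrm{supp}(u)$ is a union of $G$-conjugacy classes; since it is finite, each such class is finite, hence contained in $\Delta(G)$, and thus $u\in\mathbb{K}[\Delta(G)]$. If moreover $u$ is a unit with two-sided inverse $v$, then $v$ is central as well: from centrality of $u$ one gets $vau=v(au)=v(ua)=(vu)a=a$ for every $a$, and substituting $a\mapsto av$ yields $va=av$. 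Hence $v\in\mathbb{K}[\Delta(G)]$ too, so $u$ is in fact a unit of the subalgebra $\mathbb{K}[\Delta(G)]$.

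The structural heart of the argument is that $\Delta(G)$ is torsion-free abelian when $G$ is torsion-free. For this I would invoke (or briefly reprove) standard facts about FC-groups: in any FC-group $H$ the torsion elements form a locally finite normal subgroup $T(H)$, since by Dietzmann's lemma a torsion element with finitely many conjugates has finite normal closure; and $H/T(H)$ is torsion-free abelian, because for any $h,k\in H$ the subgroup $\langle h,k\rangle$ is a finitely generated FC-group, hence (by B.~H.~Neumann's theorems on finitely generated FC-groups) has finite derived subgroup, so $[h,k]$ has finite order and $H'\subseteq T(H)$. Applying this to $H=\Delta(G)$, which is plainly an FC-group, and using $T(\Delta(G))=1$ because $G$ is torsion-free, we conclude that $\Delta(G)$ is torsion-free abelian.

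Finally I would show that $\mathbb{K}[A]$ has only trivial units for $A$ torsion-free abelian, and apply it to $A=\Delta(G)$. Such an $A$ embeds in the $\mathbb{Q}$-vector space $A\otimes_{\mathbb{Z}}\mathbb{Q}$, which carries a translation-invariant total order, so $A$ itself has a total order $<$ with $a<b\Rightarrow a+c<b+c$. Writing $a=\sum_{i=1}^{n}\lambda_ig_i$ and $b=\sum_{j=1}^{m}\mu_jh_j$ with $g_1<\dots<g_n$, $h_1<\dots<h_m$ and all $\lambda_i,\mu_j$ nonzero, the product $ab$ contains $g_1h_1$ with coefficient $\lambda_1\mu_1\neq 0$ as its strictly smallest term and $g_nh_m$ with coefficient $\lambda_n\mu_m\neq 0$ as its strictly largest term, neither of which can be cancelled; so $ab=1$ forces $g_1h_1=g_nh_m$, i.e.\ $n=m=1$, and $a$ is a monomial. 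Hence the central unit $u$ equals $\lambda g$ for some $\lambda\in\mathbb{K}^\times$ and $g\in\Delta(G)\subseteq G$. The one genuinely nontrivial ingredient is the fact about FC-centers used in the middle paragraph; for that step I would cite the standard references (e.g.\ Passman's or Tomkinson's books) rather than reproduce the proof, while everything else is elementary bookkeeping.
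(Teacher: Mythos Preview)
Your proof is correct and follows essentially the same strategy as the paper's Appendix~A: reduce to a torsion-free FC subgroup, invoke Neumann's theorem to conclude it is abelian, and then use that torsion-free abelian (hence orderable, hence unique-product) groups have only trivial units. The minor technical differences are that you work with the full FC-center $\Delta(G)$ and argue directly that the inverse of a central unit is central to land in $\mathbb{K}[\Delta(G)]$, whereas the paper works with the subgroup $H_x$ generated by $\mathrm{Supp}(x)$ and uses a projection map $\pi_{H_x}:\mathbb{K}[G]\to\mathbb{K}[H_x]$ to descend; and you spell out the ordering argument while the paper simply quotes the unique-product theorem from Passman.
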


\begin{remark}
This is a special case of Kaplansky's famous unit conjecture, which states that any unit in $\mathbb{K}[G]$ of a torsion-free group $G$ must be of the form $\lambda\cdot g$. When $\mathrm{char}(\mathbb{K})=2$, this conjecture has been disproved recently by Gardam $\cite{gg}$.
\end{remark}

The above theorem follows from $\cite{jo}$, Theorem 6.2, which deals with the more general situation of unital rings equipped with non-degenerate $G$-gradings. For readers' convenience, we will include a self-contained proof of Theorem \ref{theorem:unit} in Appendix \ref{section:A}.

\begin{lemma}\label{lemma:0}
Let $M$ be a Liouville manifold admitting a quasi-dilation. If $L\subset M$ is a closed, orientable, exact Lagrangian submanifold which is a $K(\pi,1)$ space, then $\mathit{SH}^0(M)$ must be infinite-dimensional.
\end{lemma}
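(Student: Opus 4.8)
The plan is to use the quasi-dilation and the Viterbo restriction map to $T^*L$, but crucially with local systems, to transfer the invertible element $h \in \mathit{SH}^0(M)^\times$ to an invertible element in the degree-zero Floer cohomology of $T^*L$ with group-ring coefficients, and then invoke \"Oinert's theorem (Theorem~\ref{theorem:unit}) together with an action/filtration argument. More precisely, let $(h,b) \in \mathit{SH}^0(M)^\times \times \mathit{SH}^1(M)$ be the quasi-dilation. I would first observe that if $h$ were a nonzero multiple of the unit $1 \in \mathit{SH}^0(M)$, then $(1, h^{-1}b)$ would be a genuine dilation, so Theorem~\ref{theorem:dilation} would already forbid $L$ from being a $K(\pi,1)$; hence in the situation of the lemma $h$ is \emph{not} proportional to the identity. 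The task is then to show that the mere existence of such an $h$ forces $\dim_{\mathbb{K}} \mathit{SH}^0(M) = \infty$.

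The key input is that, since $L$ is a closed orientable exact Lagrangian $K(\pi,1)$, there is a Viterbo-type restriction ring homomorphism (compatible with products)
\begin{equation}
\mathit{SH}^0(M) \longrightarrow \mathit{HW}^0(T^*L;\,\eta) \cong H^0\!\left(L;\, \mathbb{K}[\pi_1(L)]\right) \cong \mathbb{K}[\pi_1(L)],
\end{equation}
where the middle term is wrapped Floer cohomology of a cotangent fiber with the local system $\eta$ coming from the loop space / the rank-one representation twisting, and the last identification uses that $L = K(\pi,1)$ so that $C_{-*}(\Omega L) \simeq \mathbb{K}[\pi_1(L)]$ is concentrated in degree zero (Abbondandolo--Schwarz / Abouzaid). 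This is exactly the open-string refinement alluded to in the remark after Theorem~\ref{theorem:main} and in Davison, Proposition~5.2.6 --- the reason orientability is assumed is to make $\pi_1(L)$ act through an honest (untwisted, or at worst rank-one) coefficient system. Being a unital ring map, it sends the invertible $h$ to a \emph{central unit} of $\mathbb{K}[\pi_1(L)]$. Since $\pi_1(L)$ is torsion-free (it is the fundamental group of an aspherical closed manifold; here I would also need that it is torsion-free, which for aspherical closed manifolds follows because a torsion element would give a finite-dimensional $K(\mathbb{Z}/p,1)$, impossible by group cohomology), Theorem~\ref{theorem:unit} applies: the image of $h$ is $\lambda \cdot g$ for some $\lambda \in \mathbb{K}^\times$ and $g \in \pi_1(L)$.

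The remaining step is to convert this into infinite-dimensionality of $\mathit{SH}^0(M)$ via action estimates. The point is that $h$ carries a well-defined action (symplectic area) value, and for the quasi-dilation equation $\Delta(hb) = h$ to hold one shows that the action of $h$ cannot be the minimal one --- if $h$ were the identity, or any low-action class, one would be back in the dilation case; so $h$ maps under Viterbo restriction to a group-ring element $\lambda g$ with $g \neq 1$, which means $g$ has a nonzero "length" in $\pi_1(L)$. Now run the argument recursively / use that the restriction map respects powers: $h^N \mapsto \lambda^N g^N$, and $g^N \neq 1$ for all $N$ because $\pi_1(L)$ is torsion-free. The classes $h, h^2, h^3, \dots \in \mathit{SH}^0(M)$ are therefore mapped to $\mathbb{K}$-linearly independent elements of $\mathbb{K}[\pi_1(L)]$ (distinct group elements $g^N$), hence are themselves linearly independent in $\mathit{SH}^0(M)$. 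This yields $\dim_{\mathbb{K}} \mathit{SH}^0(M) = \infty$.

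The main obstacle I expect is the careful setup of the Viterbo restriction map with local coefficients and the verification that it is a \emph{unital ring} homomorphism landing in $\mathbb{K}[\pi_1(L)]$ in degree zero --- i.e. assembling the Abbondandolo--Schwarz/Abouzaid identification $\mathit{HW}^*(T^*L;\eta) \cong H_{-*}(\Omega L;\mathbb{K})$ with the correct grading and ring structure, checking it is concentrated in degree $0$ precisely because $L$ is aspherical, and checking the compatibility of Viterbo functoriality with these twistings (this is where the orientability of $L$ and the reference to Davison's Proposition~5.2.6 enter). A secondary technical point is pinning down that $h$, being part of a quasi-dilation and not equal to a multiple of the unit, genuinely maps to $\lambda g$ with $g \neq 1$; an action-filtration argument, or alternatively the observation that $g = 1$ would make $(1, \lambda^{-1}b|_L)$-type data contradict Theorem~\ref{theorem:dilation} after pulling back, should close this. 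Everything else --- torsion-freeness of $\pi_1$ of a closed aspherical manifold, and the linear independence of $\{g^N\}$ --- is standard.
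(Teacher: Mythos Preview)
Your approach is essentially the paper's: restrict $h$ to $\mathit{SH}^0(T^\ast L)$ via Viterbo functoriality, identify the target with the center of $\mathbb{K}[\pi_1(L)]$, and apply Theorem~\ref{theorem:unit} to see that the image is $\lambda g$ with $g \neq 1$ (the case $g=1$ being ruled out exactly as you say, since it would produce a dilation on $T^\ast L$, contradicting Theorem~\ref{theorem:dilation}). Your linear-independence step is actually cleaner than the paper's: you push the powers $h^m$ forward to $\lambda^m g^m$ and use that $g$ has infinite order in the torsion-free group $\pi_1(L)$, whereas the paper first argues that each $h^m$ lies in $\mathit{SH}^0_+(M)$ and then runs a cochain-level induction using the Morse/non-constant splitting of $\mathit{SC}^\ast(M)$. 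One point to tighten: your displayed target ``$\mathit{HW}^0(T^\ast L;\eta) \cong \mathbb{K}[\pi_1(L)]$'' conflates open and closed strings. The paper's route is $\mathit{SH}^0(T^\ast L) \xrightarrow{\mathit{CO}} \mathit{HH}^0(\mathcal{W}(T^\ast L)) \cong \mathit{HH}^0(C_{-\ast}(\Omega_p L)) \cong Z(\mathbb{K}[\pi_1(L)])$, via Ganatra's isomorphism and Abouzaid's generation --- this lands naturally in the \emph{center}, which is precisely what Theorem~\ref{theorem:unit} requires.
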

\begin{proof}
By Theorem \ref{theorem:dilation}, the existence of an exact Lagrangian $K(\pi,1)$ prohibits the existence of a dilation in $\mathit{SH}^1(M)$. Since $M$ admits a quasi-dilation, it follows that there is a non-trivial unit $h\in\mathit{SH}^0(M)^\times$ such that the BV operator $\Delta:\mathit{SH}^1(M)\rightarrow\mathit{SH}^0(M)$ hits $h$. Recall that the cochain complex $\mathit{SC}^\ast(M)$ which computes the symplectic cohomology is defined by (a small perturbation of) a Hamiltonian function $H:M\rightarrow\mathbb{R}$ which is a $C^2$-small Morse function $\phi$ when restricted to the interior of $\overline{M}$, so it contains the Morse complex $\mathit{CM}^\ast(\phi)$ as a subcomplex. Denote by $\mathit{SH}^\ast_+(M)$ the cohomology of the quotient complex $\mathit{SC}^\ast_+:=\mathit{SC}^\ast(M)/\mathit{CM}^\ast(\phi)$ generated by the non-constant Hamiltonian orbits. One can write
\begin{equation}\label{eq:h}
h=\alpha\cdot1+t,
\end{equation}
with $\alpha\in\mathbb{K}$ and $t\in\mathit{SH}^0_+(M)$ is non-zero. We claim that $\alpha=0$.

Let $L\subset M$ be a closed, orientable, exact Lagrangian submanifold which is a $K(\pi,1)$ space, consider the Viterbo functoriality $\mathit{SH}^\ast(M)\rightarrow\mathit{SH}^\ast(T^\ast L)$, which fits into a commutative diagram
\begin{equation}
	\begin{tikzcd}
		&\mathit{SH}^{\ast}(M) \arrow[d,"{\Delta}"'] \arrow[r] &\mathit{SH}^\ast(T^\ast L) \arrow[d,"{\Delta}"] \\
		&\mathit{SH}^{\ast-1}(M) \arrow[r] &\mathit{SH}^{\ast-1}(T^\ast L)
	\end{tikzcd}
\end{equation}
under which the element $h\in\mathit{SH}^0(M)^\times$ goes to a class
\begin{equation}
h_L=\alpha\cdot 1_L+t_L\in\mathit{SH}^0(T^\ast L)^\times
\end{equation}
which lies in the image of the BV operator, where $1_L$ denotes the identity and $t_L\in\mathit{SH}^0_+(T^\ast L)$ is non-zero. Fix an orientation of $L$, and a background class $\nu\in H^2(T^\ast L;\mathbb{Z}/2)$ given by the pullback of the second Stiefel-Whitney class $w_2(L)$, we have the isomorphisms
\begin{equation}\label{eq:iso}
\mathit{SH}^0(T^\ast L)\cong\mathit{HH}^0(\mathcal{W}(T^\ast L))\cong\mathit{HH}^0(C_{-\ast}(\Omega_pL))\cong Z(\mathbb{K}[\pi_1(L)]),
\end{equation}
where the symplectic cohomology and the wrapped Fukaya category are defined with respect to $\nu$, $C_{-\ast}(\Omega_pL)$ is the dg algebra of chains on the based loop space, and $Z(\mathbb{K}[\pi_1(L)])$ is the center of the fundamental group algebra. The first isomorphism is given by the closed-open string map $\mathit{CO}$ $\cite{sg2}$, the second isomorphism follows from the generation result of Abouzaid $\cite{ma1}$, and both of the first two isomorphisms rely on the assumption that $L$ is orientable, while the third isomorphism follows from the assumption that $L$ is a $K(\pi,1)$ space. Under the composition of these isomorphisms, the class $h_L$ can be identified with a non-trivial\footnote{The reader may find the terminology here confusing: in this paper, we consider only multiples of the the identity as trivial units in the group algebra $\mathbb{K}[G]$, while for group theorists, any $\lambda\cdot g$ for $g\in G$ and $\lambda\in\mathbb{K}^\times$ is trivial.} central unit in $\mathbb{K}[\pi_1(L)]$. Since $L$ is a $K(\pi,1)$ space, $\pi_1(L)$ is torsion-free. It follows from Theorem \ref{theorem:unit} that the image of $h_L$ must have vanishing constant coefficient, which forces $\alpha=0$. We have concluded that $h=t\in\mathit{SH}_+^0(M)$.

We proceed to show that the classes $1,h,h^2,\cdots\in\mathit{SH}^0(M)$ are linearly independent, which would in particular imply that $\mathit{SH}^0(M)$ is infinite-dimensional. First notice that for any $m\in\mathbb{N}$, $h^m\in\mathit{SH}_+^0(M)$. To see this we again use the Viterbo functoriality $\mathit{SH}^\ast(M)\rightarrow\mathit{SH}^\ast(T^\ast L)$ and the isomorphism (\ref{eq:iso}), under which the elements $h^m\in\mathit{SH}^0(M)$ are sent to powers of a non-trivial central unit in $\mathbb{K}[\pi_1(L)]$. By Theorem \ref{theorem:unit}, these powers must have vanishing constant coefficients, which implies that $h^m\in\mathit{SH}_+^0(M)$. Suppose that there are scalars $c_0,\cdots,c_m\in\mathbb{K}$ such that
\begin{equation}
c_0\cdot1+c_1\cdot h+\cdots+c_m\cdot h^m=0.
\end{equation}
On the cochain level, one can find a cocycle $\eta\in\mathit{SC}_+^0(M)$ representing $h$ such that
\begin{equation}\label{eq:dependence}
c_0\cdot e+c_1\cdot\eta+\cdots+c_m\cdot\eta^m=d\beta
\end{equation}
for some cochain $\beta\in\mathit{SC}_+^{-1}(M)$, where $e\in\mathit{CM}^0(\phi)$ corresponds to the unique minimum of the Morse function $\phi$. Since all the terms except for the first one on the left-hand side of (\ref{eq:dependence}) lie in the complement of the subspace $\mathit{CM}^0(\phi)\subset\mathit{SC}^0(M)$, it follows that $c_0=0$, and we are reduced to dealing with the equation
\begin{equation}
c_1\cdot 1+c_2\cdot h+\cdots+c_m\cdot h^{m-1}=0
\end{equation}
in $\mathit{SH}^0(M)$. Arguing by induction shows that $c_0=c_1=\cdots=c_m=0$.
\end{proof}

\begin{remark}\label{remark:0}
\begin{itemize}
	\item[(i)] One can prove the infinite-dimensionality of $\mathit{SH}^0(M)$ under the weaker assumption that $M$ admits a cyclic dilation. For the purpose of this paper, introducing this additional sophistication is uncalled for, but this has potential applications in proving the nonexistence of exact Lagrangian $K(\pi,1)$ spaces in more general smooth affine varieties with log Kodaira dimension $-\infty$, see Section \ref{section:cyclic}.
	\item[(ii)] Similar considerations have appeared in the literature with the invertible element $h\in\mathit{SH}^0(M)^\times$ with zero constant coefficient being replaced by the \textit{Borman-Sheridan class} $s\in\mathit{SH}^0(M)$. It is argued in $\cite{dt}$, Proposition 3.8 that if $M$ is a Liouville subdomain in the complement of a Donaldson hypersurface in a compact monotone symplectic manifold, and $L\subset M$ is an exact Lagrangian torus whose Landau-Ginzburg superpotential $w_L$ is a non-constant Laurent polynomial, then the powers $s^k\in\mathit{SH}^0(M)$ must be linearly independent. In particular, $\mathit{SH}^0(M)$ is infinite-dimensional.
\end{itemize}
\end{remark}

\begin{example}
When $M=T^\ast T^n$ is the cotangent bundle of an $n$-dimensional torus, we have an isomorphism
\begin{equation}\label{eq:torus}
\mathit{SH}^\ast(M)\cong\mathbb{K}\left[H^1(T^n;\mathbb{Z})\right]\otimes H^\ast(T^n;\mathbb{K})
\end{equation}
as $\mathbb{K}$-algebras, which in particular shows that $\mathit{SH}^0(M)$ is infinite-dimensional. It is well-known that $M$ admits a quasi-dilation. More sophisticated examples of Liouville manifolds satisfying the assumptions of Lemma \ref{lemma:0} are given by the affine conic bundles (\ref{eq:GP}) over $(\mathbb{C}^\ast)^{n-1}$ in Example \ref{example:quasi-dilation}. The infinite-dimensionality of their $\mathit{SH}^0(M)$ can be seen from the existence of a distinguished class $\mathring{s}\in\mathit{SH}^0(M)$ which is mirror to a non-vanishing regular function $\mathring{w}_0:Y_0\rightarrow\mathbb{K}^\times$, whose ``quantum correction" gives the superpotential $w_0:Y_0\rightarrow\mathbb{K}$ mentioned in Remark \ref{remark:LG}. See $\cite{aak}$, Section 6.
\end{example}

\begin{proof}[Proof of Theorem \ref{theorem:main}]
Let $M$ be a Milnor fiber as in the assumption of Theorem \ref{theorem:main}, and let $L\subset M$ be a closed, orientable exact Lagrangian submanifold which is a $K(\pi,1)$ space. As we have explained in the introduction, since $M$ is the Milnor fiber of a singularity which is doubly stabilized, it admits a quasi-dilation. Lemma \ref{lemma:0} can therefore be applied to $M$ to conclude that $\mathit{SH}^0(M)$ is infinite-dimensional, with an invertible element $h\in\mathit{SH}_+^0(M)$ whose powers span an infinite-dimensional subspace.

On the other hand, Theorem \ref{theorem:HMS} proved in Section \ref{section:MF}, combined with the fact that the closed-open string map (\ref{eq:CO}) is an isomorphism, gives rise to the isomorphism
\begin{equation}
\mathit{SH}^0(M)\cong\mathit{HH}^0\left(\mathit{MF}(\mathbb{K}^{n+2},\Gamma_w,w)\right).
\end{equation}
Let us temporarily restrict ourselves to the special case when the powers $k_3,\cdots,k_{n+1}$ in the Brieskorn-Pham polynomial $p$ are distinct prime numbers. By Proposition \ref{proposition:HH}, it follows that for such $M$ one has
\begin{equation}\label{eq:dim}
\dim_\mathbb{K}\mathit{SH}^0(M)<\infty,
\end{equation}
which contradicts with Lemma \ref{lemma:0}.

We have proved the nonexistence of exact Lagrangian $K(\pi,1)$ spaces in $M$ in the special case when $p$ is a Brieskorn-Pham polynomial whose powers are distinct prime numbers. Since for any Milnor fiber $M$ of a singularity of the form $\{w=0\}\subset\mathbb{C}^{n+1}$, where $w$ is as in (\ref{eq:conic}), there is a Milnor fiber $M'$ associated to the singularity
\begin{equation}
z_1^2+z_2^2+z_3^{p_3}+\cdots+z_{n+1}^{p_{n+1}}=0,
\end{equation}
where $p_3,\cdots,p_{n+1}$ are distinct prime numbers satisfying
\begin{equation}
p_3\geq k_3,\cdots,p_{n+1}\geq k_{n+1},
\end{equation}
the general case follows from a standard degeneration argument, which implies the existence of a Liouville embedding $\overline{M}\hookrightarrow M'$.
\end{proof}

We now use the computation of $\mathit{HH}^n\left(\mathit{MF}(\mathbb{K}^{n+2},\Gamma_w,w)\right)$ in Proposition \ref{proposition:HH} and the Koszul duality between Fukaya $A_\infty$-algebras proved in Proposition \ref{proposition:Koszul} to give a proof to Proposition \ref{proposition:PSS}.

\begin{proof}[Proof of Proposition \ref{proposition:PSS}]
By $\cite{ol}$, Theorem 1.8, there is a commutative diagram
\begin{equation}\label{eq:Lazarev}
	\begin{tikzcd}
		&H^n(M;\mathbb{K}) \arrow[d,"{\mathscr{L}}"'] \arrow[r,"{\mathit{PSS}}"] &\mathit{SH}^n(M)  \\
		&K_0(\mathcal{W}(M)) \arrow[r,"{\mathit{tr}}"] &\mathit{HH}_0(\mathcal{W}(M)) \arrow[u,"{\mathit{OC}}"']
	\end{tikzcd}
\end{equation}
for any Weinstein manifold $M$, where $\mathscr{L}$ is a surjective map constructed by Lazarev ($\cite{ol}$, Theorem 1.4), and $\mathit{tr}$ is the Dennis trace map.
	
Let $M$ be as in the assumption of Proposition \ref{proposition:PSS}. Since the open-closed string map $\mathit{OC}$ is known to be an isomorphism $\cite{sg2}$, and our computations of $\mathit{HH}^n\left(\mathit{MF}(\mathbb{K}^{n+2},\Gamma_w,w)\right)$ in Proposition \ref{proposition:HH} shows that $H^n(M;\mathbb{K})$ and $\mathit{SH}^n(M)$ have the same rank, it suffices to prove that the Dennis trace map $\mathit{tr}:K_0(\mathcal{W}(M))\rightarrow\mathit{HH}_0(\mathcal{W}(M))$ is surjective. In fact, according to the generation result of $\cite{cdgg}$ and $\cite{gps2}$ mentioned in Section \ref{section:Koszul}, the Grothendieck group $K_0(\mathcal{W}(M))$ is freely generated by the classes $[L_1],\cdots,[L_\mu]$ of the cocores constructed in Section \ref{section:Koszul} using the Lefschetz presentation of $M$. Under $\mathit{tr}$, they are mapped to the Hochschild homology classes defined by the identity endomorphisms $e_{L_1},\cdots,e_{L_\mu}$ of the objects $L_1,\cdots,L_\mu$ which generate the wrapped Fukaya category $\mathcal{W}(M)$. Using the Koszul duality between the Fukaya $A_\infty$-algebras $\mathcal{F}_M$ and $\mathcal{W}_M$ proved in Proposition \ref{proposition:Koszul}, and the Morita invariance of Hochschild homology, we get an isomorphism
\begin{equation}\label{eq:duality}
\mathit{HH}_0(\mathcal{W}(M))\cong\hom_\mathbb{K}\left(\mathit{HH}_0(\mathcal{F}(M)),\mathbb{K}\right),
\end{equation}
under which the Hochschild homology class $[e_{L_i}]$, where $1\leq i\leq\mu$, is mapped to the dual of $[e_{V_i}]$, where as in Section \ref{section:Koszul}, $V_i\subset M$ is a vanishing cycle which intersects $L_i$ transversely at a unique point. Now (\ref{eq:n}), together with the fact that $\mathit{OC}$ is a isomorphism implies that
\begin{equation}
\dim_\mathbb{K}\mathit{HH}_0(\mathcal{F}(M))\cong\dim_\mathbb{K}\mathit{HH}_0(\mathcal{W}(M))\cong\dim_\mathbb{K}\mathit{SH}^n(M)=\mu,
\end{equation}
so $\mathit{HH}_0(\mathcal{F}(M))$ is freely generated by the orthogonal classes $[e_{V_1}],\cdots,[e_{V_\mu}]$. This allows us to conclude from (\ref{eq:duality}) that $\mathit{HH}_0(\mathcal{W}(M))$ is freely generated by the classes $[e_{L_1}],\cdots,[e_{L_\mu}]$.
\end{proof}

\begin{remark}\label{remark:PSS}
We expect the isomorphism $\mathit{PSS}:H^n(M;\mathbb{K})\xrightarrow{\cong}\mathit{SH}^n(M)$ to hold for Milnor fibers which are smooth affine varieties with log Kodaira dimension $-\infty$. Whether this is true seems to be interesting in its own right. In the case when $M$ is a plumbing of $T^\ast S^n$'s, where $n\geq3$, according to a tree, this fact is essentially proved in $\cite{ps5}$, Lecture 15, where it is shown to be useful in understanding the homology classes of odd-dimensional Lagrangian spheres.
\end{remark}

Proposition \ref{proposition:PSS} is interesting also because of the following fact:

\begin{proposition}\label{proposition:n}
Let $M$ be a Liouville manifold which admits a cyclic dilation, and the PSS map $H^n(M;\mathbb{K})\rightarrow\mathit{SH}^n(M)$ is an isomorphism, then $M$ does not contain an exact Lagrangian torus with non-zero homology class $[L]\in H_n(M;\mathbb{K})$.
\end{proposition}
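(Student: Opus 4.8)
The plan is to restrict a cyclic dilation from $M$ to the cotangent bundle $T^\ast L\cong T^\ast T^n$ of the hypothetical exact Lagrangian torus, and then to play off the ring structure of $\mathit{SH}^\ast(T^\ast T^n)$ against the finite-dimensionality of $H^n(M;\mathbb{K})$ forced by the $\mathit{PSS}$ isomorphism. So suppose $L\cong T^n\subset M$ is a closed exact Lagrangian torus with $[L]\neq0$ in $H_n(M;\mathbb{K})$ (note $T^n$ is automatically orientable and \textit{Spin}), and let $\tilde{b}\in\mathit{SH}^1_{S^1}(M)$ be a cyclic dilation with $B(\tilde{b})=h\in\mathit{SH}^0(M)^\times$.

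The first step is to transport the cyclic dilation to $T^\ast L$. By the $S^1$-equivariant Viterbo functoriality of Cohen-Ganatra $\cite{cg}$ and its compatibility with the marking map $B$ (the same compatibility used in the proof of Theorem \ref{theorem:cyclic}), the image $h_L\in\mathit{SH}^0(T^\ast L)$ of $h$ has the form $B(\tilde{b}_L)$ for some $\tilde{b}_L\in\mathit{SH}^1_{S^1}(T^\ast L)$, and it is a unit because ordinary Viterbo functoriality is a unital ring map. By $(\ref{eq:torus})$ we have $\mathit{SH}^0(T^\ast T^n)\cong\mathbb{K}[H^1(T^n;\mathbb{Z})]\cong\mathbb{K}[\mathbb{Z}^n]$ as $\mathbb{K}$-algebras, whose units are exactly the monomials $\lambda\cdot q^{g_0}$ with $\lambda\in\mathbb{K}^\times$ and $g_0\in\mathbb{Z}^n$ (elementary for free abelian groups, or a special case of Theorem \ref{theorem:unit}). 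Hence $h_L=\lambda\cdot q^{g_0}$. If $g_0=0$, then $B(\lambda^{-1}\tilde{b}_L)=1$, so $T^\ast T^n$ admits a cyclic dilation with $h=1$; but the zero section $T^n\subset T^\ast T^n$ is a closed exact Lagrangian $K(\pi,1)$, contradicting Theorem \ref{theorem:cyclic}. Therefore $g_0\neq0$.

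The second step uses $[L]\neq0$ to build a class in $\mathit{SH}^n(M)$ that survives restriction to the top of $T^\ast T^n$. Since $\mathbb{K}$ is a field, $H^n(M;\mathbb{K})\cong\hom_\mathbb{K}(H_n(M;\mathbb{K}),\mathbb{K})$, so there is $\alpha\in H^n(M;\mathbb{K})$ with $\langle\alpha,[L]\rangle\neq0$; by naturality of the Kronecker pairing the pullback $i_L^\ast\alpha\in H^n(L;\mathbb{K})\cong\mathbb{K}$ is nonzero. Now $\mathit{PSS}$ is compatible with Viterbo functoriality and with the topological restriction $H^\ast(M)\to H^\ast(L)$, and for a cotangent bundle $\mathit{PSS}\colon H^\ast(T^n;\mathbb{K})\hookrightarrow\mathit{SH}^\ast(T^\ast T^n)$ is the split injective inclusion of the unit subalgebra, which under $(\ref{eq:torus})$ is $\omega\mapsto1\otimes\omega$. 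Therefore the Viterbo image of $\mathit{PSS}(\alpha)\in\mathit{SH}^n(M)$ is $\mathit{PSS}(i_L^\ast\alpha)=c\cdot(1\otimes[T^n]^\vee)$ for some $c\in\mathbb{K}^\times$, where $[T^n]^\vee\in H^n(T^n;\mathbb{K})$ is the top class.

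The final step produces the contradiction. For each $k\geq0$ the class $h^k\cdot\mathit{PSS}(\alpha)\in\mathit{SH}^n(M)$ is defined via the product on $\mathit{SH}^\ast(M)$, and since Viterbo functoriality is a ring homomorphism its image in $\mathit{SH}^n(T^\ast T^n)\cong\mathbb{K}[\mathbb{Z}^n]\otimes H^\ast(T^n;\mathbb{K})$ equals $h_L^k\cdot\big(c\,(1\otimes[T^n]^\vee)\big)=\lambda^k c\cdot q^{kg_0}\otimes[T^n]^\vee$. Because $g_0\neq0$, the exponents $kg_0\in\mathbb{Z}^n$ are pairwise distinct, so these images are linearly independent in $\mathit{SH}^n(T^\ast T^n)$; hence the classes $\{h^k\cdot\mathit{PSS}(\alpha)\}_{k\geq0}$ are linearly independent in $\mathit{SH}^n(M)$, and $\mathit{SH}^n(M)$ is infinite-dimensional. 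This contradicts the hypothesis that $\mathit{PSS}\colon H^n(M;\mathbb{K})\to\mathit{SH}^n(M)$ is an isomorphism, since $M$ has the homotopy type of a finite CW complex and so $H^n(M;\mathbb{K})$ is finite-dimensional. The step I expect to require the most care is assembling the naturality inputs invoked above — the $S^1$-equivariant Viterbo functoriality of $\cite{cg}$ and its compatibility with $B$, the unital-ring-map property of ordinary Viterbo functoriality, and the compatibility of $\mathit{PSS}$ with restriction — which are known but rely on $L$ being orientable together with a suitable choice of background class; the only genuinely computational input, the ring structure $(\ref{eq:torus})$ and the identification of the image of $\mathit{PSS}$ with the $q^0$-summand, is standard.
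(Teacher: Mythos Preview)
Your proof is correct and shares the paper's core strategy: restrict the cyclic dilation to $T^\ast T^n$ via Viterbo functoriality, identify $h_L$ as a monomial $\lambda q^{g_0}$ with $g_0\neq 0$, and then exploit the product of $h$ with the $\mathit{PSS}$-image of a cohomology class dual to $[L]$. The difference is only in the final contradiction. The paper multiplies once: writing $v=\mathit{PSS}(\alpha)$, it observes that $vh$ maps to $h_L\otimes\mathit{PD}([\mathit{pt}])\in\mathit{SH}^n_+(T^\ast L)$, hence (by compatibility of Viterbo with the quotient $\mathit{SH}^n\to\mathit{SH}^n_+$) has nonzero image in $\mathit{SH}^n_+(M)$, contradicting surjectivity of $\mathit{PSS}$ in degree $n$. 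You instead take all powers $h^k\cdot\mathit{PSS}(\alpha)$ and read off linear independence from their images $q^{kg_0}\otimes[T^n]^\vee$, contradicting finite-dimensionality of $H^n(M;\mathbb{K})$. The paper's version is slightly more economical (one product suffices, and it uses the $\mathit{SH}^+$ filtration that is already in play from Lemma~\ref{lemma:0}), while your version avoids invoking the compatibility of Viterbo with the quotient map and mirrors the linear-independence argument of Lemma~\ref{lemma:0} more directly. Both routes rest on the same naturality inputs you flagged.
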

\begin{proof}
Denote by $v\in\mathit{SH}^n(M)$ the class coming from $[L]$ via the PSS map, and consider the non-zero class $vh\in\mathit{SH}^n(M)$ obtained by taking the pair-of-pants product with the invertible element $h\in\mathit{SH}^0_+(M)$ established in Lemma \ref{lemma:0}. Under the Viterbo functoriality $\mathit{SH}^\ast(M)\rightarrow\mathit{SH}^n(T^\ast L)$, $vh$ goes to a class of the form $h_L\otimes\mathit{PD}([\mathit{pt}])$, where we have used the identification (\ref{eq:torus}). As before, $h_L$ is a non-trivial (central) unit in the fundamental group algebra, and $\mathit{PD}([\mathit{pt}])$ is the Poincar\'{e} dual of the class of a point. The PSS map for the Liouville manifold $T^\ast L$ is given by the inclusion of constant loops via the obvious identification $H^\ast(T^\ast L;\mathbb{K})\cong H^\ast(L;\mathbb{K})$, from which one sees that the class $h_L\otimes\mathit{PD}([\mathit{pt}])$ lies in $\mathit{SH}_+^n(T^\ast L)$, so must be the class $vh\in\mathit{SH}^n(M)$, by the compatibility of the Viterbo functoriality with the map $\mathit{SH}^n(M)\rightarrow\mathit{SH}_+^n(M)$. This contradicts with the fact that $\mathit{SH}_+^n(M)=0$.
\end{proof}

By Eliashberg's regular Lagrangian conjecture, every oriented, closed exact Lagrangian submanifold $L\subset M$ should be homologically essential, as long as $M$ is a Weinstein manifold. Assuming this, the $n$th degree PSS map being an isomorphism provides a geometric criterion for the nonexistence of exact Lagrangian tori. It would be interesting to know whether the assumption that $M$ admits a cyclic dilation can be removed in the above proposition.

\section{Generalizations}\label{section:cyclic}

We discuss several possible generalizations of Theorem \ref{theorem:main} in the final section. 
\bigskip

In deriving the version of homological mirror symmetry for the wrapped Fukaya category of $M$ as stated in Theorem \ref{theorem:HMS}, we have used the work of Futaki-Ueda $\cite{fu}$ on the homological mirror symmetry of Brieskorn-Pham polynomials. This is the main reason why we restricted ourselves to the special case where the polynomial $p$ in the definition of the affine conic bundle $M$ is Brieskorn-Pham. There are ongoing efforts by Gammage-Smith and Polishchuk-Varolgunes $\cite{pva}$ to prove the Berglund-H\"{u}bsch homological mirror symmetry for a general weighted homogeneous polynomial. Because of this one may expect to apply the same method of this paper to prove the nonexistence of exact Lagrangian $K(\pi,1)$ spaces in affine conic bundles (\ref{eq:double}), with $p$ being a general invertible weighted homogeneous polynomial with an isolated critical point at the origin. However, one should notice that the computation of the corresponding Hochschild cohomology group, $\mathit{HH}^0\left(\mathit{MF}(\mathbb{K}^{n+2},\Gamma_w,w)\right)$, would become more complicated in general than what we have presented in Proposition \ref{proposition:HH}.

In another direction, in view of Theorem \ref{theorem:dilation}, it seems to be legitimate to expect that the nonexistence of exact Lagrangian $K(\pi,1)$ spaces in affine conic bundles is due to some deeper reason:

\begin{conjecture}\label{conjecture:dilation}
Let $M$ be an affine conic bundle of the form (\ref{eq:double}), such that the polynomial $p(z_3,\cdots,z_{n+1})$ is weighted homogeneous and invertible. Then $M$ admits a dilation.
\end{conjecture}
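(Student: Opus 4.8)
The plan is to establish Conjecture \ref{conjecture:dilation} by upgrading the quasi-dilation coming from \cite{ps5}, Corollary 19.7 to an honest dilation, using the explicit mirror description of $\mathcal{W}(M)$ obtained in Theorem \ref{theorem:HMS}. Concretely, since the closed--open map (\ref{eq:CO}) is an isomorphism for the Weinstein manifold $M$, the BV operator $\Delta$ on $\mathit{SH}^\ast(M)$ corresponds under $\mathit{CO}$ to the Connes-type differential $B$ on the Hochschild complex $\mathit{HH}^\ast(\mathcal{W}(M))\cong\mathit{HH}^\ast\left(\mathit{MF}(\mathbb{K}^{n+2},\Gamma_w,w)\right)$. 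A dilation is then an element $b\in\mathit{HH}^1$ with $B(b)=\mathbf{1}\in\mathit{HH}^0$. So the first step is to make the $S^1$-action (cyclic/mixed-complex structure) on the matrix factorization category explicit enough to identify $B$ with a concrete map in the presentation (\ref{eq:HH}) of Hochschild cohomology; here one should be able to leverage the fact that $\mathit{MF}(\mathbb{K}^{n+2},\Gamma_w,w)\simeq\mathit{MF}(\mathbb{K}^{n+2},\Gamma_w,w+z_0\cdots z_{n+1})$ is the Calabi--Yau completion $\Pi_n(\mathcal{F}_{\tilde p}^!)$ of a smooth algebra, for which the cyclic structure on Hochschild invariants is understood (Keller's exact Calabi--Yau structure).

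Next I would produce the candidate class. The natural source of a degree-one Hochschild class is the Euler-type/weight vector field: in the localized expression (\ref{eq:HH}), the summand with $\gamma=(1,\cdots,1)$, $V_\gamma=V$ and $\dim N_\gamma=0$ contributes $\left(\mathit{Jac}_w\otimes\mathbb{K}[z_0]\right)_{u\chi}$ in even degrees, while the summand with $V_\gamma=V$, $\dim N_\gamma = 1$ (carrying $z_0^\vee$, as in the proof of Proposition \ref{proposition:HH}) contributes odd-degree classes of the form $z_0^\vee\otimes z_0^{2r}z_3^{2r-k_3 m}\cdots z_{n+1}^{2r-k_{n+1}m}$. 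One expects the Euler vector field $\sum_i \chi_i z_i \partial_{z_i}$, suitably normalized using the two quadratic variables $z_1,z_2$ (which are precisely what forces $z_0\cdots z_{n+1}$ above the diagonal), to give a class $b\in\mathit{HH}^1$ whose image under $B$ is a nonzero multiple of $\mathbf 1\in\mathit{HH}^0$; rescaling then yields a dilation. This is exactly the mechanism by which triply stabilized singularities (Example \ref{example:triple}) acquire dilations, and the doubly-stabilized hypothesis should make the quadratic directions supply the missing ``half unit.''

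The main obstacle, I expect, is pinning down the cyclic/$S^1$-structure on $\mathit{MF}(\mathbb{K}^{n+2},\Gamma_w,w)$ precisely enough to compute $B(b)$ rather than merely $\Delta(\text{something})$; the Hochschild cohomology computation in Proposition \ref{proposition:HH} is purely about the underlying complex and says nothing about the Connes differential, so a genuinely new input is required. One route is to transport the known BV structure on $\mathit{SH}^\ast(M)$ through $\mathit{CO}$ and argue on the symplectic side directly --- e.g. via the Lefschetz fibration iteration of \cite{ss}, Section 7, adapted to the equidimensional Landau--Ginzburg mirror $(Y_0,w_0)$ of Remark \ref{remark:LG}, where $Y_0$ is the complement of a hypersurface in a toric Calabi--Yau and the BV operator is computable from toric data. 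Another route is the algebraic one of Lekili--Ueda type: identify $B$ on $\Pi_n(\mathcal{A})$ with Keller's canonical negative-cyclic lift and check the pairing against the weight class. A secondary subtlety is the degree/grading bookkeeping: one must verify the candidate $b$ genuinely sits in $\mathit{HH}^1$ (not merely in $\mathit{SH}^1$ up to the $S^1$-correction), which from (\ref{eq:relation}) amounts to solving $2m(1-n)+1 = 1$, i.e. $m=0$, consistently with the constraints (\ref{eq:m}); this matches the fact that the relevant $z_0^\vee$-classes exist. Once the cyclic structure is in hand, the remaining verification that $B(b)=\mathbf 1$ up to a unit should be a finite, if delicate, computation in the Koszul/Jacobian model, after which $M$ admits a dilation and hence, by Theorem \ref{theorem:dilation}, no exact Lagrangian $K(\pi,1)$ at all --- recovering and conceptually explaining Theorem \ref{theorem:main}.
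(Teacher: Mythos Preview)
The statement you are attempting to prove is Conjecture \ref{conjecture:dilation}: it is presented in the paper as an open problem, and the paper contains no proof of it. So there is nothing to compare your proposal against; rather, what you have written is a research outline, and you yourself flag the central gap (``a genuinely new input is required'' to identify the BV/cyclic structure on the matrix factorization side). As it stands this is not a proof, and several of the steps you treat as routine are in fact where the difficulty lies.

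Two concrete problems. First, the input you rely on from the paper, Theorem \ref{theorem:HMS} and Proposition \ref{proposition:CY-completion}, is only established when $p$ is Brieskorn--Pham; the conjecture is stated for arbitrary invertible weighted homogeneous $p$, so even the categorical mirror description you want to compute in is not available in that generality (this is exactly the point made at the start of Section \ref{section:cyclic}). Second, the identification ``$\Delta$ on $\mathit{SH}^\ast$ corresponds under $\mathit{CO}$ to the Connes differential $B$ on $\mathit{HH}^\ast$'' is not correct as written: the Connes operator acts on Hochschild \emph{homology}, and transporting it to Hochschild cohomology requires precisely the smooth Calabi--Yau isomorphism whose explicit form you say you do not yet have. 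Relatedly, your reading of the decomposition (\ref{eq:HH}) is slightly off: when $V_\gamma=V$ one has $\dim N_\gamma=0$, and the odd-degree classes carrying $z_0^\vee$ arise from the Koszul complex factor $\{\mathbb{K}z_0^\vee\otimes\mathbb{K}[z_0]\to\mathbb{K}[z_0]\}$, not from $\wedge^{\dim N_\gamma}N_\gamma^\vee$.

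Finally, note that even in the Brieskorn--Pham case the paper's actual argument does \emph{not} produce a dilation: it proves $\dim_\mathbb{K}\mathit{SH}^0(M)<\infty$ and combines this with Lemma \ref{lemma:0} to rule out exact Lagrangian $K(\pi,1)$'s. That is logically weaker than Conjecture \ref{conjecture:dilation}, which would indeed (via Theorem \ref{theorem:dilation}) give a cleaner explanation---but that remains open.
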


There are also Milnor fibers beyond the affine conic bundles in Conjecture \ref{conjecture:dilation} which should admit dilations. For example, Let $M_{p,q,r}$ be the Milnor fiber associated to the isolated singularity
\begin{equation}\label{eq:pqr}
x^p+y^q+z^r+\lambda xyz+w^2=0,\textrm{ where }\frac{1}{p}+\frac{1}{q}+\frac{1}{r}\leq1,
\end{equation}
which are once stabilizations of the unimodal singularities studied in $\cite{ak1}$, where one can take $\lambda=1$ when $\frac{1}{p}+\frac{1}{q}+\frac{1}{r}<1$, and $\lambda=0$ otherwise. It is proved in $\cite{yl1}$, Section 4.2 that $M_{p,q,r}$ admits a quasi-dilation. In the case when $\frac{1}{p}+\frac{1}{q}+\frac{1}{r}=1$, the singularity (\ref{eq:pqr}) is weighted homogeneous, and one can apply similar arguments as used in this paper to prove the corresponding Berglund-H\"{u}bsch homological mirror symmetry for these polynomials and use the computational tools in Section \ref{section:Hochschild} to show that $\mathit{SH}^0(M_{p,q,r})$ is finite-dimensional. This implies the nonexistence of exact Lagrangian $K(\pi,1)$'s in the Milnor fibers $M_{3,3,3}$, $M_{4,4,2}$ and $M_{6,3,2}$, and it is therefore natural to expect that they all admit dilations. Of course, the nonexistence of exact Lagrangian $K(\pi,1)$'s in $M_{3,3,3}$ follows also from Theorem \ref{theorem:cyclic}, since it admits a Liouville embedding into the Milnor fiber of a 3-fold triple point. The same should hold for all the Milnor fibers $M_{p,q,r}$, even though the singularities (\ref{eq:pqr}) are in general not weighted homogeneous.

\begin{remark}
One powerful tool of proving the existence of a dilation is the Logarithmic PSS map introduced by Ganatra-Pomerleano $\cite{gp1,gp2}$. However, this usually relies on realizing $M$ as a Liouville subdomain of the complement of a normal crossing divisor in some smooth Fano variety, which is not possible for topological reasons when the degree of the polynomial $p$ in Conjecture \ref{conjecture:dilation} is sufficiently high.
\end{remark}

As we have mentioned in the introduction, the existence of a cyclic dilation $\tilde{b}\in\mathit{SH}_{S^1}^1(M)$ whose image under the marking map equals the identity is enough to deduce the nonexistence of exact Lagrangian $K(\pi,1)$'s in $M$. In $\cite{yl2}$, it is conjectured that this is the case for smooth affine varieties with log Kodaira dimension $-\infty$. Though this seems to be overoptimistic, the following should hold:

\begin{conjecture}
Let $M$ be a Milnor fiber of an isolated hypersurface singularity whose log Kodaira dimension is $-\infty$, then it admits a cyclic dilation with $h=1$.
\end{conjecture}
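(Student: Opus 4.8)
The plan is to produce the cyclic dilation algebraically rather than by a count of $S^1$-families of cylinders: I would recognise the wrapped Fukaya category of such an $M$ as an $n$-Calabi--Yau completion of a homologically smooth $A_\infty$-algebra, use the exactness of the resulting Calabi--Yau structure, and transport that exactness across the $S^1$-equivariant open--closed map. The structural input is the expectation that, if $M$ is the Milnor fiber of an isolated hypersurface singularity of log Kodaira dimension $-\infty$, then $\mathcal{W}_M$ is quasi-isomorphic to the $n$-Calabi--Yau completion $\Pi_n(\mathcal{B})$ of some homologically smooth $A_\infty$-algebra $\mathcal{B}$ over $\Bbbk$. For $w$ as in (\ref{eq:conic}) with $p$ Brieskorn--Pham this is exactly Proposition \ref{proposition:CY-completion}, with $\mathcal{B}=\mathcal{F}_{\tilde p}$; for a general invertible weighted homogeneous $p$ it is the content of the Lekili--Ueda conjecture (\cite{lu2}, Remark 3.1); and for $n=3$ it holds even without the Brieskorn--Pham hypothesis by Habermann--Smith \cite{hs}.

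Granting this, the rest is short. Since $\mathcal{B}$ is smooth, Keller's theorem (\cite{bk}, Theorem 4.8; \cite{bke}, Theorem 1.1) guarantees that the smooth $n$-Calabi--Yau structure on $\Pi_n(\mathcal{B})$ is \emph{exact}: the class in $\mathit{HH}_{-n}(\Pi_n(\mathcal{B}))$ defining it admits a lift along the natural map $\mathit{HC}_{-n+1}(\Pi_n(\mathcal{B}))\rightarrow\mathit{HH}_{-n}(\Pi_n(\mathcal{B}))$ to a class $\widetilde{\xi}$ in cyclic homology, exactly as recalled in the discussion preceding Proposition \ref{proposition:trivial-ext}. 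Next I would feed $\widetilde{\xi}$ into the $S^1$-equivariant open--closed string map $\mathcal{OC}^{S^1}:\mathit{HC}_{\ast-n}(\mathcal{W}(M))\rightarrow\mathit{SH}^\ast_{S^1}(M)$, the cyclic refinement of the open--closed isomorphism $\mathcal{OC}:\mathit{HH}_{\ast-n}(\mathcal{W}(M))\xrightarrow{\cong}\mathit{SH}^\ast(M)$ of Ganatra \cite{sg2}, whose existence and compatibilities belong to the Cohen--Ganatra package \cite{cg}. Because $\mathcal{OC}$ sends the smooth Calabi--Yau class to the unit $1\in\mathit{SH}^0(M)$, and because $\mathcal{OC}^{S^1}$ intertwines the natural map $\mathit{HC}_{-n+1}\rightarrow\mathit{HH}_{-n}$ with the marking map $B$ of (\ref{eq:marking}), the class $\tilde b:=\mathcal{OC}^{S^1}(\widetilde{\xi})\in\mathit{SH}^1_{S^1}(M)$ satisfies $B(\tilde b)=1$, so $\tilde b$ is a cyclic dilation with $h=1$. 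Already for $p$ Brieskorn--Pham this would upgrade Theorem \ref{theorem:main}: combined with Proposition \ref{proposition:CY-completion} it yields an honest cyclic dilation with $h=1$ on $M$, and then Theorem \ref{theorem:cyclic} gives the nonexistence of exact Lagrangian $K(\pi,1)$'s directly, bypassing the Hochschild computation of Section \ref{section:Hochschild}.

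The hard part is the structural input in full generality. For an invertible weighted homogeneous $p$ it should follow from the method of Section \ref{section:MF} once the $\mathbb{Z}$-graded Berglund--H\"{u}bsch mirror symmetry is available (the ongoing work of Gammage--Smith and Polishchuk--Varolgunes \cite{pva}), combined with Proposition \ref{proposition:trivial-ext}; but one must then check that the relevant Koszul dual is homologically smooth, which for $\mathcal{F}_{\tilde p}$ is immediate from the tensor decomposition (\ref{eq:tensor}) but is not clear for an arbitrary invertible polynomial. For a singularity that is not weighted homogeneous there is neither a workable mirror nor a controllable degeneration to a weighted homogeneous model, so recognising $\mathcal{W}(M)$ as a Calabi--Yau completion of a smooth algebra is itself open --- it is the log-Kodaira-dimension-$-\infty$ half of the Koszul-duality expectation recorded in the remark following Proposition \ref{proposition:Koszul}. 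A more routine but still necessary point is to confirm that $\mathcal{OC}^{S^1}$ matches the exact Calabi--Yau lift with a genuine cyclic dilation and that $\mathcal{OC}$ carries the smooth Calabi--Yau class precisely to the unit, rather than merely to some invertible element of $\mathit{SH}^0(M)$; these compatibilities are expected from \cite{cg} and would be verified along the lines of \cite{yl2}, Section 5.
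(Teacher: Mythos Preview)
This statement is a \emph{conjecture} in the paper; there is no proof to compare against. Your strategy --- transport Keller's exact Calabi--Yau structure on a completion across the $S^1$-equivariant open--closed map --- is the right circle of ideas (it is essentially the framework of \cite{yl2}), but it has two genuine gaps.

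First, scope: the structural input $\mathcal{W}_M\cong\Pi_n(\mathcal{B})$ rests on the cyclic-completion identity (\ref{eq:trivial-ext}), which is a feature of the \emph{double suspension}. The conjecture concerns all Milnor fibers of log Kodaira dimension $-\infty$, most of which are not doubly stabilized --- for instance the Brieskorn--Pham fibers of Example~\ref{example:cyclic} with no $k_i=2$, or the once-stabilized $M_{p,q,r}$ of (\ref{eq:pqr}). For these there is no reason to expect $\mathcal{W}_M$ to be a Calabi--Yau completion of anything, and the Koszul-duality expectation you cite does not by itself supply one: Koszul duality between $\mathcal{F}_M$ and $\mathcal{W}_M$ is a far weaker statement than $\mathcal{F}_M$ being a trivial extension.

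Second, the step you call ``routine'' is not. Keller's theorem produces \emph{an} exact smooth Calabi--Yau class on $\Pi_n(\mathcal{B})$, but such classes form a torsor over $\mathit{HH}^0(\mathcal{W}_M)^\times\cong\mathit{SH}^0(M)^\times$, and the quasi-isomorphism of Proposition~\ref{proposition:CY-completion} is only of $A_\infty$-algebras, not of Calabi--Yau algebras. So Keller's class need not coincide with the \emph{geometric} one --- the class that $\mathit{OC}$ sends to $1$ --- and all your argument yields is a cyclic dilation with \emph{some} invertible $h$. The paper itself witnesses this phenomenon: for plumbings $M_T$ of $T^\ast S^n$'s the wrapped category is a Ginzburg algebra, hence a Calabi--Yau completion carrying Keller's exact structure, yet the resulting cyclic dilation (\cite{yl2}, Proposition~6.2) is only known ``possibly with $h\neq1$''. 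Pinning down $h=1$ is precisely the content of the conjecture, and your argument, even in the Brieskorn--Pham case where Proposition~\ref{proposition:CY-completion} applies, does not touch it.
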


One may also use the existence of a cyclic dilation with arbitrary $h\in\mathit{SH}^0(M)^\times$ to exclude the existence of exact Lagrangian submanifolds which are $K(\pi,1)$ spaces. As a simplest class of examples, consider the Liouville manifolds $M_T$ obtained by plumbings of $T^\ast S^n$'s according to any tree $T$, where $n\geq3$. It is proved in $\cite{yl2}$, Proposition 6.2 that $M_T$ admits a cyclic dilation, possibly with $h\neq1$. In view of Remark \ref{remark:0} (ii), it follows that if $M_T$ contains an exact Lagrangian submanifold which is a $K(\pi,1)$ space, then $\mathit{SH}^0(M_T)$ is infinite-dimensional. On the other hand, by the commutative diagram (\ref{eq:Lazarev}), it follows that
\begin{equation}
\mathit{SH}^n(M_T)\cong H^n(M_T;\mathbb{K})\cong\mathbb{K}^{|T_0|},
\end{equation}
where $T_0$ denotes the set of vertices of $T$. Now Proposition \ref{proposition:n} suggests that $M$ does not contain an exact Lagrangian torus. The symplectic cohomology group $\mathit{SH}^0(M_T)$ should be computable as the Hochschild cohomology $\mathit{HH}^0(\mathcal{G}_T)$ of the Ginzburg dg algebra $\mathcal{G}_T$ associated to the tree $T$.

\appendix

\section{Central units in group algebras}\label{section:A}

We reproduce here the proof of $\cite{jo}$, Theorem 6.2 in the special case of a group algebra $\mathbb{K}[G]$, where $\mathbb{K}$ is any field, and $G$ is a torsion-free group. Any element $x\in\mathbb{K}[G]$ can be written as $x=\sum_{g\in G}c_gg$, where $c_g\in\mathbb{K}$, and only finitely many $c_g$ are non-zero. The \textit{support} of $x$ is defined to be
\begin{equation}
\mathrm{Supp}(x):=\{g\in G|c_g\neq0\}.
\end{equation}
$\mathrm{Supp}(x)$ generates a subgroup of $G$, which we will denote by $H_x$. To get a better understanding of the structure of $H_x\subset G$, we recall the following notion.

\begin{definition}
A group $G$ is called an \textit{FC-group} if each $g\in G$ has only a finite number of conjugates in $G$. In other words, $[G:C_G(g)]<\infty$, where $C_G(g)$ is the centralizer of $g$.
\end{definition}

As for a torsion-free group $G$, we have the following well-known result.

\begin{lemma}[Neumann $\cite{bn}$]\label{lemma:abel}
Every torsion-free FC-group is abelian.
\end{lemma}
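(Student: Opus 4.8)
The plan is to reduce the statement to finitely generated groups, bound the index of the center, and then invoke Schur's theorem. First I would note that it suffices to prove that every \emph{finitely generated} torsion-free FC-group is abelian: any two elements of $G$ lie in a common finitely generated subgroup, and both hypotheses (torsion-freeness and the FC property) are inherited by subgroups, so once every finitely generated subgroup of $G$ is abelian, $G$ itself is abelian.

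So take $G=\langle g_1,\dots,g_n\rangle$ torsion-free and FC. The crucial point is that $Z(G)$ has finite index in $G$: an element lies in $Z(G)$ as soon as it commutes with each $g_i$, since the centralizer of any fixed element is a subgroup, so if it contains all the generators it is all of $G$. Hence $Z(G)=\bigcap_{i=1}^{n}C_G(g_i)$, a finite intersection of subgroups each of finite index by the FC hypothesis, and therefore itself of finite index.

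Next I would apply Schur's theorem: if $[G:Z(G)]<\infty$, then the commutator subgroup $[G,G]$ is finite. Since $G$ is torsion-free, every subgroup of $G$ is torsion-free, and a finite torsion-free group is trivial; therefore $[G,G]=1$, i.e.\ $G$ is abelian. This completes the reduction, and hence the proof.

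The only step requiring genuine care is the finiteness of $[G:Z(G)]$; the remainder is either the cited Schur theorem or immediate. If one prefers to avoid invoking Schur, an alternative is to observe that $G$, being a finite extension of the finitely generated abelian group $Z(G)$, is polycyclic, so $[G,G]$ is finitely generated; a finitely generated FC-group has finite commutator subgroup, and torsion-freeness again forces $[G,G]=1$.
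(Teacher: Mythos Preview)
Your argument is correct and is essentially the classical proof of Neumann's theorem: reduce to the finitely generated case, use the FC hypothesis to show $Z(G)=\bigcap_i C_G(g_i)$ has finite index, invoke Schur's theorem to conclude $[G,G]$ is finite, and then kill $[G,G]$ using torsion-freeness. Each step is sound as written.

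There is nothing to compare against in the paper itself: the lemma is stated with attribution to Neumann and labelled ``well-known'', but no proof is given---the paper simply quotes it as input to the proof of Theorem~\ref{theorem:unit}. Your write-up therefore supplies more than the paper does. The alternative route you sketch at the end (via polycyclicity) is also valid but unnecessary once Schur is in hand.
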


The notion of an FC-group is closely related to our interest in central elements of $\mathbb{K}[G]$, as can be seen from the following fact.

\begin{proposition}\label{proposition:FC}
Let $G$ be any group, and $\mathbb{K}[G]$ be its group algebra. If $x\in\mathbb{K}[G]$ is a central element, then $H_x\subset G$ is an FC-group.
\end{proposition}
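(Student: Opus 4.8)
The plan is to use the centrality of $x$ to pin down how conjugation acts on the finite set $\mathrm{Supp}(x)$, and then to promote the resulting finiteness from a generating set to all of $H_x$ by a standard FC-center argument.

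First I would extract the key consequence of centrality. For every $g\in G$ one has $gxg^{-1}=x$; since $k\mapsto gkg^{-1}$ is a bijection of $G$, comparing coefficients of the two sides gives $c_{g^{-1}kg}=c_k$ for all $k,g\in G$. In particular, conjugation by an arbitrary $g\in G$ carries $\mathrm{Supp}(x)$ bijectively onto itself. Restricting $g$ to $H_x$, this says that for each $h\in\mathrm{Supp}(x)$ the entire $H_x$-conjugacy class of $h$ is contained in $\mathrm{Supp}(x)$, hence has at most $|\mathrm{Supp}(x)|$ elements; equivalently $[H_x:C_{H_x}(h)]<\infty$ for every $h\in\mathrm{Supp}(x)$.

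Next I would invoke the FC-center $Z_{\mathrm{FC}}(H_x):=\{h\in H_x : [H_x:C_{H_x}(h)]<\infty\}$. The previous step shows $\mathrm{Supp}(x)\subseteq Z_{\mathrm{FC}}(H_x)$, so it suffices to check that $Z_{\mathrm{FC}}(H_x)$ is a subgroup of $H_x$: it is stable under inverses because $C_{H_x}(h^{-1})=C_{H_x}(h)$, and stable under products because $C_{H_x}(h_1)\cap C_{H_x}(h_2)$ centralizes $h_1h_2$ and, being an intersection of two finite-index subgroups, is itself of finite index. Once this is known, $Z_{\mathrm{FC}}(H_x)$ is a subgroup containing the generating set $\mathrm{Supp}(x)$, hence $Z_{\mathrm{FC}}(H_x)=H_x$; that is, every element of $H_x$ has only finitely many conjugates in $H_x$, so $H_x$ is an FC-group.

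There is no serious obstacle here; the only points requiring a little care are the coefficient comparison (one uses injectivity of $k\mapsto g^{-1}kg$ so that distinct support elements cannot collide) and the elementary verification that the FC-center is a subgroup. Downstream, this proposition is combined with Lemma \ref{lemma:abel}: since $H_x$ is a subgroup of the torsion-free group $G$ it is itself torsion-free, so, being an FC-group, it must be abelian.
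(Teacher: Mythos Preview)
Your argument is correct and follows essentially the same route as the paper: centrality forces $\mathrm{Supp}(x)$ to be stable under conjugation, hence each support element has a finite conjugacy class, and then the FC-center subgroup property promotes this from the generating set to all of $H_x$. The only cosmetic difference is that the paper phrases the first step via $a\,\mathrm{Supp}(x)=\mathrm{Supp}(x)\,a$ and works with $\Delta(G)$ rather than $Z_{\mathrm{FC}}(H_x)$, leaving the ``FC-center is a subgroup'' verification implicit; your version spells this out.
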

\begin{proof}
Define the subset
\begin{equation}
\Delta(G):=\{g\in G|g\textrm{ has only finitely many conjugates in }G\}.
\end{equation}
For any $a\in G$, consider the homogeneous element $y=c_aa\in\mathbb{K}[G]$ for some $c_a\in\mathbb{K}^\times$. Note that
\begin{equation}
a\mathrm{Supp}(x)=\mathrm{Supp}(yx)=\mathrm{Supp}(xy)=\mathrm{Supp}(x)a,
\end{equation}
which shows that $\mathrm{Supp}(x)$ is closed under conjugation by elements of $G$. Since $\mathrm{Supp}(x)$ is by definition a finite subset of $G$, it follows that $\mathrm{Supp}(x)\subset\Delta(G)$. Since $H_x$ is generated by $\mathrm{Supp}(x)$, we conclude that $H_x$ is an FC-group.
\end{proof}

We now turn our attention to units in the group algebra $\mathbb{K}[G]$. Given any subgroup $H\subset G$, there is a projection map $\pi_H:\mathbb{K}[G]\rightarrow\mathbb{K}[H]$, defined by
\begin{equation}
\pi_H\left(\sum_{g\in G}c_gg\right)=\sum_{g\in H}c_gg.
\end{equation}

\begin{lemma}\label{lemma:inv}
Let $H$ be a subgroup of $G$, and $x\in\mathbb{K}[H]$. Then $x$ is a left (right) invertible in $\mathbb{K}[H]$ if and only if it's left (right) invertible in $\mathbb{K}[G]$.
\end{lemma}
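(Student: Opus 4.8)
The plan is to use the projection $\pi_H:\mathbb{K}[G]\rightarrow\mathbb{K}[H]$ defined above, whose essential feature is that it is a homomorphism of $\mathbb{K}[H]$-bimodules. First I would verify this linearity by a direct inspection of supports: decomposing $G$ into left cosets $gH$ shows that right multiplication by $h\in H$ permutes these cosets while fixing the coset $H$ itself, whence $\pi_H(zh)=\pi_H(z)h$ for all $z\in\mathbb{K}[G]$ and $h\in\mathbb{K}[H]$; decomposing instead into right cosets $Hg$ gives, symmetrically, $\pi_H(hz)=h\,\pi_H(z)$. One also notes $\pi_H(1)=1$, since the identity lies in $H$, so that $\pi_H$ is literally the projection onto a direct $\mathbb{K}[H]$-submodule summand of $\mathbb{K}[G]$.

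Granting this, the lemma follows quickly. The forward implication is immediate, because $\mathbb{K}[H]$ is a unital subring of $\mathbb{K}[G]$, so any (left or right) inverse in $\mathbb{K}[H]$ already witnesses invertibility in $\mathbb{K}[G]$. For the converse, suppose $x\in\mathbb{K}[H]$ has a left inverse $y\in\mathbb{K}[G]$, i.e. $yx=1$. Applying $\pi_H$ and using its right $\mathbb{K}[H]$-linearity (with $x\in\mathbb{K}[H]$ as the right factor) yields
\begin{equation}
\pi_H(y)\cdot x=\pi_H(yx)=\pi_H(1)=1,
\end{equation}
so $\pi_H(y)\in\mathbb{K}[H]$ is a left inverse of $x$ inside $\mathbb{K}[H]$. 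The case of a right inverse is entirely symmetric: from $xy=1$ one applies $\pi_H$ and its left $\mathbb{K}[H]$-linearity to obtain $x\cdot\pi_H(y)=1$.

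I do not anticipate any genuine obstacle here; the proof is a one-line application of a projection once its bimodule-linearity is recorded. The only point requiring care is to pair the left/right coset decomposition with the correct (left/right) notion of invertibility, which is why I would state and use both halves of the $\mathbb{K}[H]$-linearity of $\pi_H$ explicitly. This lemma then feeds into the main line of Appendix \ref{section:A}: combined with Proposition \ref{proposition:FC} and Lemma \ref{lemma:abel}, it allows one to pass from a central unit of $\mathbb{K}[G]$ to a unit in the group algebra of the abelian subgroup $H_x$, where the structure of units is far easier to control.
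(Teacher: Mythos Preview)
Your proof is correct and follows essentially the same route as the paper's: both establish the $\mathbb{K}[H]$-linearity of $\pi_H$ (you via the coset decomposition, the paper via the equivalent observation that $\mathrm{Supp}(x'y)\subset G\setminus H$ when $\mathrm{Supp}(x')\subset G\setminus H$ and $y\in\mathbb{K}[H]$) and then apply $\pi_H$ to the equation $yx=1$. Your version is slightly more complete in that you state both the left and right bimodule linearity and explicitly handle the trivial forward implication, whereas the paper only writes out the right-linearity and leaves the other direction to symmetry.
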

\begin{proof}
First note that for $x\in\mathbb{K}[G]$ and $y\in\mathbb{K}[H]$, we have $\pi_H(xy)=\pi_H(x)y$. In fact, put $x'=x-\pi_H(x)$, then $\mathrm{Supp}(x')\subset G\setminus H$. Note that if $g\in G\setminus H$ and $h\in H$, then $gh\notin H$, which implies that $\mathrm{Supp}(x'y)\subset G\setminus H$. Hence,
\begin{equation}\label{eq:prj}
\pi_H(xy)=\pi_H\left((x'+\pi_H(x))y\right)=\pi_H(x'y)+\pi_H(\pi_H(x)y)=\pi_H(x)y.
\end{equation}
Now suppose that $x\in\mathbb{K}[G]$ satisfies $yx=1_G$ for some $y\in\mathbb{K}[G]$, where $1_G$ denotes the identity of the group algebra $\mathbb{K}[G]$. We have by (\ref{eq:prj}) that
\begin{equation}
1_H=\pi_H(1_G)=\pi_H(yx)=\pi_H(y)x,
\end{equation}
so $x$ is left invertible in $\mathbb{K}[H]$ as well. The right invertible case can be treated analogously.
\end{proof}

For the proof of Theorem \ref{theorem:unit}, we need to use the known proof of Kaplansky's unit conjecture for groups with unique product property.

\begin{definition}
A group $G$ is said to be a unique product group if given two non-empty finite subsets $A$ and $B$ of $G$, there exist at least one element $g\in G$ which can be uniquely written as $g=ab$ for some $a\in A$ and $b\in B$.
\end{definition}

\begin{theorem}[$\cite{dsp}$, Theorem 26.2]\label{theorem:up}
If $G$ is a unique product group, then $\mathbb{K}[G]$ has only units of the form $\lambda\cdot g$ for some $\lambda\in\mathbb{K}^\times$ and $g\in G$.
\end{theorem}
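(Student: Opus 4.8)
The plan is to reduce the statement to a purely combinatorial strengthening of the unique product hypothesis, the so-called \emph{two unique products} property: for any pair of finite, non-empty subsets $A,B\subset G$ which are not both singletons, the product set $AB$ contains at least two distinct elements admitting a unique factorisation of the form $ab$ with $a\in A$ and $b\in B$. Granting this property, the theorem is immediate. Let $u\in\mathbb{K}[G]$ be a unit, say $uv=1$, and write $u=\sum_g c_g g$, $v=\sum_g d_g g$, $A=\mathrm{Supp}(u)$, $B=\mathrm{Supp}(v)$. If $A$ and $B$ were not both singletons, there would be distinct $g_1,g_2\in AB$ with unique factorisations $g_i=a_ib_i$ ($a_i\in A$, $b_i\in B$); since this is the only way of writing $g_i$ as a product of an element of $A$ with an element of $B$, the coefficient of $g_i$ in $uv$ equals $c_{a_i}d_{b_i}\neq 0$. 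Hence $g_1,g_2\in\mathrm{Supp}(uv)=\mathrm{Supp}(1)=\{e\}$, which is absurd. Therefore $|A|=|B|=1$, so $u=\lambda g$ with $\lambda=c_g\in\mathbb{K}^\times$ and $g\in G$.

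It remains to verify that a unique product group automatically has the two unique products property; this is the combinatorial heart of $\cite{dsp}$, Theorem 26.2. I would argue by induction on $n:=|A|+|B|$. The base case $n=3$ is trivial: if, say, $A=\{a_1,a_2\}$ and $B=\{b\}$, then $a_1b\neq a_2b$ and each is uniquely factorisable. For $n\geq 4$ one may assume $|A|\geq 2$; the unique product property produces one uniquely factorisable element $g_0=a_0b_0\in AB$, and, supposing for contradiction that it is the only one, one applies the inductive hypothesis to the pair $(A\setminus\{a_0\},B)$ (and symmetrically to a pair obtained by deleting a suitable element of $B$) to extract uniquely factorisable elements of the smaller product sets; a short case analysis, comparing these with the ``forbidden'' translates $a_0B$ and $Ab_0$, then forces the existence of a second uniquely factorisable element of $AB$, contradicting the assumption.

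I expect this last case analysis to be the only genuinely delicate point of the proof; the reduction in the first paragraph is routine bookkeeping with supports, entirely parallel to the (easier) fact that a unique product group has no zero divisors, where a single uniquely factorisable element of $AB$ already suffices. Note also that only the relation $uv=1$ is used, so the argument in fact shows that every one-sided unit of $\mathbb{K}[G]$ has the stated form.
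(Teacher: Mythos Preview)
The paper does not prove this theorem: it is quoted from Passman, and the remark immediately following it explains that Passman's original statement is for \emph{two unique products} groups, the equivalence with the unique product property being a later result of Strojnowski. Your proposal follows exactly this decomposition, so there is no divergence in strategy. Your first paragraph is a correct and complete proof of Passman's step (two unique products forces $|\mathrm{Supp}(u)|=|\mathrm{Supp}(v)|=1$), and your observation that only the relation $uv=1$ is needed is also right.

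The second paragraph is a sketch of Strojnowski's theorem, and here there is a genuine gap, which you yourself flag. The inductive setup is sound, but the ``short case analysis'' is the entire content of the result, not a bookkeeping afterthought. After applying the inductive hypothesis to $(A\setminus\{a_0\},B)$ and to $(A,B\setminus\{b_0\})$, the uniquely factorisable elements of the smaller product sets that fail to remain uniquely factorisable in $AB$ must lie in the translates $a_0B$ and $Ab_0$ respectively; but nothing in what you have written prevents \emph{all} of them from being absorbed in this way, and excluding that possibility is precisely where the difficulty lies. Strojnowski's published argument does not proceed by induction on $|A|+|B|$ in this fashion, so you cannot simply defer to it to fill the hole. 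In short, your outline matches the references the paper cites and goes further than the paper itself (which offers no proof), but a self-contained argument would require either reproducing Strojnowski's actual device or genuinely completing the case analysis you allude to, and neither is routine.
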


\begin{remark}
The above theorem was originally stated in $\cite{dsp}$ for two unique products groups instead of unique product groups. A group $G$ is said to be a two unique products group if for any two non-empty finite subsets $A$ and $B$ of $G$ with $|A|+|B|>2$, there are at least two distinct elements $g,h\in G$ which can be uniquely represented as $g=ab$, $h=cd$, where $a,c\in A$ and $b,d\in B$. However, it was proved later by Strojnowski $\cite{as}$ that a group $G$ is a unique product group if and only if it is a two unique products group.
\end{remark}

It is known that every unique product group is torsion-free. Conversely, all torsion-free abelian groups are unique product groups.

\begin{proof}[Proof of Theorem \ref{theorem:unit}]
Let $x\in Z\left(\mathbb{K}[G]\right)$ be a central unit of $\mathbb{K}[G]$, and let $H_x$ be the subgroup of $G$ generated by $\mathrm{Supp}(x)$. It follows from Lemma \ref{lemma:inv} that $x$ is also a unit in $\mathbb{K}[H_x]$. By Proposition \ref{proposition:FC}, $H_x$ is a torsion-free FC-group, which is necessarily abelian by Lemma \ref{lemma:abel}. Since $H_x$ is a unique product group, we can apply Theorem \ref{theorem:up} to complete the proof.
\end{proof}

\end{document}